\newcommand{\C}{\mathbb{C}}
\newcommand{\F}{\mathbb{F}}
\renewcommand{\P}{\mathbb{P}}
\newcommand{\Q}{\mathbb{Q}}
\newcommand{\R}{\mathbb{R}}
\newcommand{\Z}{\mathbb{Z}}
\newcommand{\Aff}{\mathrm{Aff}}
\newcommand{\Aut}{\mathrm{Aut}}
\newcommand{\Card}{\mathrm{Card\,}}
\renewcommand{\d}{\mathrm{d}}
\newcommand{\End}{\mathrm{End}}
\newcommand{\Elem}{\mathrm{Elem}}
\newcommand{\GL}{{\mathrm {GL}}}
\newcommand{\id}{\mathrm{id}}
\newcommand{\Image}{\mathrm{Im}}
\newcommand{\Ker}{\mathrm{Ker}}
\newcommand{\Spec}{{\mathrm {Spec}}\,}
\renewcommand{\mod}{\,\mathrm{mod}\,}
\newtheorem*{MainA}{Theorem A}
\newtheorem*{MainB}{Theorem B}
\newtheorem*{MainC}{Theorem C}
\newtheorem*{MainA2}{Theorem A.2}
\newtheorem*{MainC1}{Theorem C.1}
\newtheorem*{MainC2}{Theorem C.2}
\newtheorem*{Cor1.1}{Corollary 1.1}
\newtheorem*{Cor1.2}{Corollary 1.2}
\newtheorem*{CorD}{Corollary D}
\newtheorem*{CorD1}{Corollary D.1}
\newtheorem*{CorD2}{Corollary D.2}
\newtheorem*{vdKthm}{van der Kulk Theorem}
\newtheorem*{Cornu}{Cornulier Theorem}
\newtheorem*{proposition}{Proposition}
\newtheorem{lemma}{Lemma}%[section]
\newtheorem*{quest}{Question}
\newcommand{\ch}{\operatorname{ch}}
\newcommand{\tr}{\operatorname{tr}}
\font\small=cmr10
\begin{document}

\title{Linearity and Nonlinearity of groups of polynomial automorphisms of $K^2$}

\author{Olivier Mathieu}

\affil{\small Institut Camille Jordan du CNRS\\

\small Universit\'e de Lyon\\ 

\small F-69622 Villeurbanne Cedex\\

\small mathieu@math.univ-lyon1.fr}

%\begin{document}

\maketitle

\begin{abstract} Let $K$ be a field, and let $\Aut \,K^2$
be the group of polynomial automorphisms of $K^2$. We investigate
which subgroups are linear or not. In characteristic zero,
there are small nonlinear subgroups and some big linear subgroups.
When $K$ has finite characteristic, the whole group
$\Aut\,K^2$ is linear whenever $K$ is finite, and nonlinear otherwise.
\footnote{Research supported by UMR 5208 du CNRS}

\end{abstract}

\noindent
\centerline{\it This paper is respectfully dedicated to 
Jacques Tits.}

\section*{Introduction}

Recall that a group $\Gamma$ is called {\it linear}, 
or {\it linear over a ring} in case of ambiguity, if
there is an embedding $\Gamma\subset GL(n,R)$
for some integer $n$ and some commutative ring $R$. 
Moreover $\Gamma$ is called {\it linear over a field} 
if it can be embedded into $GL(n,K)$ for some integer $n$ and some field $K$.

Let $\Aut\,K^2$ be the group of polynomial automorphisms  
of the affine plane $K^2$. In this paper, we will investigate the linearity or nonlinearity properties for the subgroups 
of $\Aut\,K^2$. In particular, we will  consider the following subgroups

\centerline{$\Aut_0\,K^2=\{\phi\in \Aut\,K^2\vert\,\phi({\bf 0})={\bf 0})\}$, and }

\centerline{$\Aut_1\,K^2=\{\phi\in \Aut_0\,K^2\vert\,{\textnormal d}
\phi\vert_{\bf 0}=\id$\}.}

\noindent The first result of the present paper is

\begin{MainA} (A.1) If $K$ is infinite, the group $\Aut_0\,K^2$ is not linear, even over a ring.

(A.2) Moreover if $\ch\,K=0$
the group $\Aut_0\,K^2$ contains finitely generated subgroups which are  not linear,
even over a ring.
\end{MainA}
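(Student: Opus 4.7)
For (A.1), the plan is to exhibit a subgroup $H \subset \Aut_0 K^2$ whose semi-direct structure cannot fit into any linear representation. Consider $H = V \rtimes T$, with $T = \{(x,y) \mapsto (\mu x, \lambda y) : \mu, \lambda \in K^*\} \cong (K^*)^2$ and $V = \{(x,y) \mapsto (x, y + P(x)) : P \in x^2 K[x]\} \cong (x^2 K[x], +)$. Then $V$ decomposes as $\bigoplus_{n \ge 2} V_n$ with $V_n = \{(x,y+cx^n) : c \in K\} \cong (K,+)$ a one-dimensional $T$-weight space of character $\chi_n(\mu,\lambda) = \lambda \mu^{-n}$; since $K$ is infinite the $\chi_n$ are pairwise distinct (for any $n \neq m$ there is $\mu \in K^*$ with $\mu^{n-m} \neq 1$).

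Suppose $\pi : \Aut_0 K^2 \hookrightarrow GL_N(R)$. For generic $t_0 \in T$ the values $\chi_n(t_0) \in K^*$ are pairwise distinct. The key lemma is then that $\pi(t_0) \in GL_N(R)$ cannot induce, by conjugation on the abelian $\pi(V)$, infinitely many distinct scaling characters. The strategy is to reduce modulo a suitable maximal ideal of $R$ to work in $GL_N(F)$ over a field $F$, chosen so that $\pi|_{V_n}$ remains non-trivial for infinitely many $n$ and the $\chi_n(t_0)$ remain distinct units of $F$. Over the field, each $\pi(V_n)$ is a one-parameter subgroup on which $\bar\pi(t_0)$ conjugates by the scalar $\chi_n(t_0)$, so $\pi(V_n)$ lies in the $\chi_n(t_0)$-eigenspace of the adjoint action of $\bar\pi(t_0)$ on $M_N(F)$. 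But the adjoint action has at most $N^2$ distinct eigenvalues (ratios of eigenvalues of $\bar\pi(t_0)$), contradicting the infinite family $\chi_n(t_0)$. The main obstacle is the reduction step: producing a maximal ideal that simultaneously preserves distinctness of the characters and the faithfulness of $\pi|_V$ on enough $V_n$'s, which is where the infinitude of $K$ is used crucially.

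For (A.2), the plan invokes the ring version of Mal'cev's theorem: any finitely generated subgroup $\Gamma \subset GL_n(R)$ with $R$ commutative is residually finite, because $\Gamma$ lies inside $GL_n(R')$ for a finitely generated $\Z$-subring $R' \subset R$, and $R'$ is a Jacobson ring with finite residue fields at maximal ideals by the Hilbert Nullstellensatz, so the reduced Jacobson radical separates nontrivial elements. It therefore suffices to produce, in characteristic $0$, a finitely generated subgroup $\Gamma \subset \Aut_0 K^2$ that is not residually finite. A natural candidate is a non-Hopfian Baumslag--Solitar group $BS(m,n) = \langle a, t : t a^m t^{-1} = a^n \rangle$ with $|m|, |n| \ge 2$ and $|m| \ne |n|$, obtained by combining a well-chosen diagonal element of $GL_2(K)$ with a Jonqui\`eres element to enforce the defining relation. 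The essential obstacle is that naive constructions using a single Jonqui\`eres element only realize the metabelian and residually finite quotient $BS(1, n/m) \cong \Z[1/mn] \rtimes \Z$, because Jonqui\`eres elements of the form $(x, y+cx^k)$ admit arbitrary rational roots $(x, y+(c/\ell) x^k)$ in $\Aut_0 K^2$. Overcoming this will require mixing two independent Jonqui\`eres directions, the ``horizontal'' $(x, y + P(x))$ and the ``vertical'' $(x + Q(y), y)$, so as to produce a genuinely non-metabelian, non-Hopfian finitely generated subgroup.
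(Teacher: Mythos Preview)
Your intuition---that the torus action on $V=\bigoplus_{n\ge 2}V_n$ exhibits infinitely many distinct weights, too many for a finite-dimensional representation---is exactly the engine of the paper's proof. But two of the steps you flag as routine are in fact the whole difficulty.

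First, the ring-to-field reduction. You propose to kill a maximal ideal of $R$ while keeping infinitely many $\pi\vert_{V_n}$ nontrivial and the $\chi_n(t_0)$ distinct. There is no reason such an ideal exists; a single maximal ideal can collapse all of $\pi(V)$. The paper bypasses this entirely: it uses the amalgamated-product structure $\Aut_0 K^2\simeq GL(K^2)*_{B_0}\Elem_0(K^2)$ to show that $\Aut_0 K^2$ has \emph{trivial commutants} (any two nontrivial normal subgroups fail to commute), and then proves the general lemma that a group with trivial commutants which is linear over a ring is already linear over a field. This reduction is clean and uses nothing about $V$ or $T$.

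Second, and more seriously, your eigenvalue argument over a field is not correct as written. You say ``$\pi(V_n)$ lies in the $\chi_n(t_0)$-eigenspace of the adjoint action of $\bar\pi(t_0)$ on $M_N(F)$''. But $\chi_n(t_0)\in K^*$, while the eigenvalues of $\Ad\bar\pi(t_0)$ live in $\overline F$; there is no a priori map $K^*\to F^*$ relating them. Conjugation by $\bar\pi(t_0)$ sends $\bar\pi(v_c)$ to $\bar\pi(v_{\chi_n(t_0)c})$, which is an automorphism of the abstract group $(K,+)$, not multiplication by a scalar in $F$. Bridging this gap is precisely the content of the paper's Section~4 on \emph{semi-algebraic characters}: one shows (in characteristic $p$, the only case left after A.2) that a faithful representation of $K^*\ltimes_n K$ forces some generalized eigencharacter of $\End V$ to be of the form $x\mapsto\mu(x)^n$ for a genuine field embedding $\mu\colon K\hookrightarrow L$, and then that the exponent $n$ is recoverable from the character. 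Only then does ``infinitely many $n$'' contradict finite-dimensionality.

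\textbf{Part (A.2).} Your plan---embed a non-Hopfian $BS(m,n)$ and invoke Mal'cev---is a natural first thought, but you correctly identify that naive Jonqui\`eres constructions only yield the residually finite quotient $BS(1,k)$, and your proposed fix (``mix two Jonqui\`eres directions'') is not carried out. In fact it is unclear that any non-Hopfian $BS(m,n)$ embeds in $\Aut_0 K^2$ at all, so this route may simply be blocked.

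The paper does something different and more direct. It builds the finitely generated group $\Gamma=\langle\sigma,\sigma',\tau\mid [\sigma,\sigma']=1,\ \sigma\tau\sigma^{-1}=\tau^2\rangle$, realized as an amalgam $\Z^2*_\Z(\Z\ltimes\Z_{(2)})$. This $\Gamma$ again has trivial commutants, so linearity over a ring would force linearity over a field. Over a field one argues: the relation $\sigma\tau\sigma^{-1}=\tau^2$ forces $\rho(\tau)$ to be quasi-unipotent of odd quasi-order (hence one may assume unipotent, and $\ch=0$); setting $e=\log\rho(\tau)$ gives $\rho(\sigma)e\rho(\sigma)^{-1}=2e$, and a generalized-eigenspace filtration for $\rho(\sigma)$ then shows $\rho(\Gamma)$ is solvable---contradicting trivial commutants. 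Finally $\Gamma$ embeds in $\Aut_0\Q^2$ via $\sigma=\tfrac12\id$, $\sigma'=\begin{pmatrix}1&1\\1&0\end{pmatrix}$, $\tau\colon(x,y)\mapsto(x,y+x^2)$. Note that residual finiteness is never invoked.
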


It was known that the much larger Cremona group $Cr_2(\Q)$ is
not linear over a field, see \cite{C} \cite{P}. 
More recently, the nonlinearity over a field of the whole group
$\Aut\,\Q^2$ was proved in \cite {Co}. In the same paper, Y. Cornulier raises the question (answered by Theorem A) 
of finding a nonlinear finitely generated (FG in the sequel) 
subgroup in $\Aut\,\Q^2$. 

Various authors show that automorphism groups share some properties with linear groups, see e.g. 
\cite{S00}\cite {BPZ}. More specifically, it was proved in  \cite{L} that
$\Aut\,K^2$ satisfies Tits alternative. Theorem A shows that these results are not a consequence of  classical 
results for linear groups. 

Roughly speaking, Theorem A.2 means that $\Aut_0\,K^2$ contains "small" subgroups 
(indeed FG subgroups) which are not linear, at least if $\ch\,K=0$. On the 
opposite there is

\begin{MainB}
For any  field $K$, the group $\Aut_1\,K^2$ is
 linear over the field $K(t)$.
\end{MainB}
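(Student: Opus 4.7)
The plan is to conjugate the action of $\Aut_1 K^2$ by a scaling defined over $K(t)$ so that it lands in a pro-unipotent ``congruence'' subgroup, and then linearize the latter using the amalgam structure of $\Aut K(t)^2$.

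Concretely: for $\phi \in \Aut_1 K^2$, I would set $\phi^{(t)}(x,y) := \tfrac{1}{t}\phi(tx,ty)$. Since $\phi$ fixes $\mathbf{0}$ with $d\phi|_{\mathbf{0}} = \id$, each monomial $a_{ij}x^iy^j$ (with $i+j\ge 2$) appearing in $\phi-(x,y)$ is replaced by $a_{ij} t^{i+j-1}x^i y^j$, which lies in $tK[t,x,y]$. Hence $\phi^{(t)}$ is a polynomial automorphism of $K[t][x,y]$ reducing to the identity modulo $t$. This is just conjugation by the scalar homothety $s_t = t\cdot I \in \GL_2 K(t)$, so $\phi\mapsto \phi^{(t)}$ is a group homomorphism, and it is injective because specialization $t\mapsto 1$ recovers $\phi$. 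Thus $\Aut_1 K^2$ embeds into the principal congruence subgroup
\[ G := \{\psi \in \Aut K(t)^2 : \psi \equiv \id \pmod{t}\}. \]

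It remains to embed $G$ into $\GL_n K(t)$. For this I would use the Jung--van der Kulk decomposition $\Aut K(t)^2 = \Aff_2 K(t) *_B \mathcal{J}(K(t))$ into an amalgamated product of the affine group and the Jonqui\`eres (triangular) group. Each vertex group is linear over $K(t)$: the affine factor sits inside $\GL_3 K(t)$ by its standard realization, while the Jonqui\`eres factor embeds into $\Aff_1 K(t,y) \subset \GL_2 K(t,y)$ via its natural affine action on $x$ with coefficients polynomial in $y$. The intersections $G\cap \Aff_2 K(t)$ and $G\cap \mathcal{J}(K(t))$ are congruence subgroups inheriting these linearities. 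A Bass--Serre-theoretic analysis of $G$ acting on the tree of the amalgam--- using that the $G$-stabilizers on that tree are small and close to the identity modulo $t$---would then assemble these local representations into a single faithful $K(t)$-linear representation of $G$.

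The main obstacle is precisely the last step: amalgamated products of linear groups are generally not linear, so the argument must use the congruence structure of $G$ in an essential way; this is plausible because the vertex stabilizers in $G$ are obtained from $\Aff_2$ and $\mathcal{J}$ by imposing $\equiv \id \pmod t$, which forces a highly constrained form for the matrix coefficients. A further difficulty is that the proof must be characteristic-free, so one cannot simply invoke the Lie-theoretic logarithm that identifies pro-unipotent groups with their Lie algebras in characteristic zero; an alternative, more hands-on route would be to build the faithful $K(t)$-linear action of $G$ directly on an explicit finite-dimensional $K(t)$-submodule of $K(t)[x,y]$ preserved by the $\phi^{(t)}$'s.
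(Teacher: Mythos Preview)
Your first step---conjugating by the homothety $s_t$ to land in a congruence subgroup $G\subset\Aut K(t)^2$---is correct but does not reduce the problem: $G$ is still infinite-dimensional (it contains, for instance, all $(x,y)\mapsto(x,y+tf(x))$ with $f\in K(t)[x]$ of arbitrary degree), and you have not shown that $G$ itself is linear over $K(t)$. The second step is where the argument stalls. You correctly note that amalgams of linear groups need not be linear, but neither of your proposed workarounds is viable as stated: the Bass--Serre/congruence route is a hope rather than an argument, and the ``explicit finite-dimensional $K(t)$-submodule of $K(t)[x,y]$'' cannot exist, since $\phi^{(t)}$ has the same degree in $x,y$ as $\phi$, so no bounded-degree subspace is preserved by all of $\Aut_1 K^2$. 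Note also that your linearization of the Jonqui\`eres factor lands in $\GL_2\bigl(K(t,y)\bigr)$, which is the wrong field.

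The structural point you are missing is that $\Aut_1 K^2$ is not merely a piece of an amalgam: it is a \emph{free} product. From van der Kulk one has $\Aut_0 K^2=\GL(K^2)*_{B_0}\Elem_0(K^2)$ with $\Elem_0=B_0\ltimes\Elem_1$, and the kernel of the projection $G*_A(A\ltimes E)\to G$ is always $*_{gA\in G/A}\,E^{g}$. This gives
\[
\Aut_1 K^2 \;=\; *_{\delta\in\P^1_K}\, F_\delta,
\]
with each $F_\delta\simeq t^2K[t]$ the group of degree-$\ge 2$ shears in the direction $\delta$. The paper then shows, by a ping-pong argument on $K[t]^2\setminus\{0\}$, that the congruence subgroup $GL_1(2,K[t])=\{M\in\GL(2,K[t]):M\equiv I\pmod t\}$ decomposes in exactly the same way, $GL_1(2,K[t])=*_{\delta\in\P^1_K}E_\delta$ with $E_\delta=\{I+tf(t)e_\delta:f\in K[t]\}$. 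Matching factors yields an abstract isomorphism $\Aut_1 K^2\simeq GL_1(2,K[t])\subset\mathrm{SL}(2,K(t))$, whence linearity over $K(t)$. No gluing of local representations along a tree is required; the free-product structure does all the work.
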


\noindent 
Indeed a stronger result is proved: under 
a mild hypothesis over $K$, the group $\Aut_1\,K^2$ is
 linear over the field $K$ itself, see Corollary D in
 Section 9.

Following Demazure's  approach \cite{D}, 
the functor 
 $K\mapsto \Aut\,K^2/\Aut_1\,K^2$ is represented by
a $6$-dimensional variety,  namely the affine 
group of $K^2$. Therefore Theorem B states that 
$\Aut\,K^2$ contains some "big" subgroups (of finite codimension) which are linear over a field.

Assume again that $K$ is infinite. In view of Theorems A and B
we can ask which  groups between $\Aut_1\,K^2$  and $\Aut_0\,K^2$  are linear. These groups are defined as

\centerline{$\Aut_S\,K^2:=\{\phi\in\Aut_0\,K^2\,\vert\, \d\phi_{\bf 0}\in S\}$,}

\noindent where $S$ is a subgroup of $GL(K^2)$.

An element $g\in GL(K^2)$ is called {\it $K$-reducible} if its eigenvalues lies in $K$. The next result provides a partial answer to the previous question

\begin{MainC} Let $S$ be a subgroup of $GL(K^2)$. Assume that
$S$ satisfies one of the following assertions

(i) Any $K$-reducible  $g\in S$ is  unipotent, or 

(ii) the group $S$ is FG, and any $K$-reducible  $g\in S$ is 
 quasi-unipotent.

Then the group $\Aut_S\,K^2$ is linear over $K(t)$.
\end{MainC}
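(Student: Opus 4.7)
The point of departure is the semidirect product decomposition
\[
\Aut_S\,K^2 \;=\; \Aut_1\,K^2 \rtimes S,
\]
obtained by splitting the differential map $\Aut_0\,K^2 \to \GL(K^2)$ via the inclusion $\GL(K^2) \hookrightarrow \Aut_0\,K^2$ as linear automorphisms. Theorem B supplies a faithful representation $\rho:\Aut_1\,K^2\hookrightarrow\GL(V)$ with $V$ a finite-dimensional $K(t)$-vector space. The plan is to promote $\rho$ to a faithful representation of the whole semidirect product $\Aut_S\,K^2$.

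The first step is to understand how the conjugation action of $\GL(K^2)$ on $\Aut_1\,K^2$ is transported via $\rho$ onto $V$. A natural construction uses the scaling $\phi \mapsto \phi_t$ defined by $\phi_t(x,y) := t^{-1}\phi(tx,ty)$ (which makes sense since $\phi$ has no constant term): conjugation by a diagonal $g=\mathrm{diag}(\lambda,\mu)\in\GL(K^2)$ then combines the ordinary $\Ad(g)$-action with a reparametrisation of $t$ by the eigenvalues $\lambda,\mu$. For each $g\in S$ one extracts an intertwiner $T_g$ satisfying $\rho(g\phi g^{-1}) = T_g\rho(\phi)T_g^{-1}$, and organises these into a group homomorphism $\sigma:S\to\GL(W)$ over $K(t)$. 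Assembling $(\rho,\sigma)$ on a direct sum $V\oplus W$—with $W$ chosen to contain the standard $S$-representation so that $\sigma$ is itself faithful—produces the desired linear embedding of $\Aut_S\,K^2$.

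The hypotheses (i) and (ii) intervene precisely at the descent step, where one needs the $T_g$ to remain defined over $K(t)$ rather than over a transcendental extension. Under (i), every $K$-reducible $g \in S$ is unipotent, so the eigenvalue dilations of $t$ are all trivial and no new scalars enter; non-$K$-reducible $g$, whose eigenvalues lie in a quadratic extension of $K$, are absorbed by a Galois-equivariant direct-sum construction that descends to $K(t)$. Under (ii), finite generation of $S$ combined with quasi-unipotence of its $K$-reducible elements lets one pass to a finite-index subgroup $S' \le S$ satisfying (i); one then builds the representation for $\Aut_{S'}\,K^2$ and recovers $\Aut_S\,K^2$ by an induced representation of finite degree over $K(t)$.

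The main obstacle is this descent step. A semisimple $g \in S$ whose eigenvalues generate an infinite multiplicative subgroup of $K^\times$ would dilate $t$ by an algebraically unbounded scalar and thereby force an uncontrolled extension of $K(t)$, destroying linearity: this is precisely the obstruction that makes $\Aut_0\,K^2=\Aut_{\GL(K^2)}\,K^2$ non-linear in Theorem A. Conditions (i) and (ii) are tailored exactly to exclude this obstruction.
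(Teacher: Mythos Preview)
Your reduction of case (ii) to case (i) via a finite-index subgroup and induction is correct and matches the paper exactly. The overall architecture for case (i) --- write $\Aut_S\,K^2 = S\ltimes \Aut_1\,K^2$ and try to make the Theorem~B embedding $S$-compatible --- is also the paper's architecture. The gap is in how you propose to achieve that compatibility.

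The representation $\rho$ of Theorem~B is not built from any scaling $\phi\mapsto\phi_t$; it is built from the free-product decomposition $\Aut_1\,K^2=\ast_{\delta\in\P^1_K}F_\delta$ together with the parallel decomposition $GL_1(2,K[t])=\ast_{\delta\in\P^1_K}E_\delta$, by choosing isomorphisms $F_\delta\simeq E_\delta$ factor by factor. Your intertwiner $T_g$, if it is to be $K(t)$-linear, cannot involve a substitution $t\mapsto\lambda t$: that is a field automorphism of $K(t)$, not an element of $\GL(n,K(t))$, regardless of whether $\lambda$ lies in $K$. So the diagnosis ``uncontrolled extension of $K(t)$'' is not quite the obstruction; the obstruction is $K(t)$-linearity itself. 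And for non-$K$-reducible $g$ there is no need for any Galois descent: such $g$ have no fixed line in $\P^1_K$, hence simply permute the free factors $F_\delta$ (and likewise the $E_\delta$), which is automatically compatible with any factor-by-factor isomorphism.

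What the paper actually does for (i) is this: both free products carry the $S$-action by permutation of the index set $\P^1_K$; the stabiliser $S_\delta$ of a line $\delta$ consists of $K$-reducible elements, hence by hypothesis~(i) of unipotent elements, and one checks directly that unipotent elements of $S_\delta$ act \emph{trivially} on both $F_\delta$ and $E_\delta$. Thus $F_\delta$ and $E_\delta$ are $S_\delta$-isomorphic for every $\delta$, and a general lemma on semidirect products with free-product kernels (Lemma~\ref{mixing3}) then gives $S\ltimes(\ast_\delta F_\delta)\simeq S\ltimes(\ast_\delta E_\delta)$, i.e.\ $\Aut_S\,K^2\simeq GL_S(2,K[t])$. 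This bypasses the intertwiner construction entirely: one does not show that the \emph{given} $\rho$ is $S$-equivariant (it need not be, since it depends on basis choices for each $\delta$), but rather that \emph{some} $S$-equivariant isomorphism exists.
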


For example, $\pm 1$ are the only two reducible elements of 
$SO(2,\R)$. Therefore  $\Aut_{S}\,\R^2$ is linear over $\R(t)$
for any FG subgroup $S$ of $SO(2,\R)$. We believe that 
the whole group $\Aut_{SO(2,\R)}\,\R^2$ is linear and more generally

\begin{quest} Let $S$ be a subgroup of $GL(K^2)$. Is the group
$\Aut_S\,K^2$  linear if and only if there is an integer $N$
such that  $g^N$ is unipotent for any $K$-reducible 
$g\in S$?
\end{quest}

As a consequence of Theorem B we have

\begin{CorD} (D.1) Let $K$ be a finite field. Then
$\Aut\,K^2$ is linear over $K(t)$.

(D.2) Let $p$ be a prime number. Then any FG subgroup of
$\Aut \overline{\F}_p$ is linear over $\F_p(t)$.

\end{CorD}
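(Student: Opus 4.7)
The plan is to deduce both parts from Theorem B by standard change-of-group and change-of-scalar arguments, with Theorem B supplying all the substantive input.

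For (D.1), Theorem B gives a faithful embedding $\Aut_1\,K^2 \hookrightarrow GL_n(K(t))$ for some $n$. When $K$ is finite, the coset set $\Aut\,K^2/\Aut_1\,K^2$ is in bijection with the affine group $\Aff(K^2) = K^2\rtimes GL_2(K)$, of cardinality $|K|^2\cdot |GL_2(K)|$, which is finite. Hence $\Aut_1\,K^2$ has finite index, say $m$, in $\Aut\,K^2$, and it suffices to invoke the general fact that a group containing a linear subgroup of finite index is itself linear over the same field. Concretely, fixing coset representatives $g_1,\dots,g_m$, one writes $g\cdot g_i = g_{\sigma(i)}\,h_i$ with $\sigma\in S_m$ and $h_i\in \Aut_1\,K^2$, yielding an injection $\Aut\,K^2 \hookrightarrow GL_n(K(t))\wr S_m$. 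Composing with the standard block embedding $GL_n(K(t))\wr S_m\hookrightarrow GL_{nm}(K(t))$ produces a faithful linear representation of $\Aut\,K^2$ over $K(t)$. Crucially this argument does not require that $\Aut_1\,K^2$ be normal in $\Aut\,K^2$, which is fortunate since conjugating a nontrivial element of $\Aut_1\,K^2$ by a translation generically takes one outside $\Aut_1\,K^2$.

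For (D.2), let $G$ be an FG subgroup of $\Aut\,\overline{\F}_p^{\,2}$ with generators $\phi_1,\dots,\phi_k$, and let $L\subseteq\overline{\F}_p$ be the subfield generated over $\F_p$ by all the coefficients of the $\phi_i$ and of the $\phi_i^{-1}$. Since $L$ is finitely generated over $\F_p$ and contained in the algebraic closure $\overline{\F}_p$, it is in fact a finite field. By construction $G\leq \Aut\,L^2$, and by part (D.1) the latter embeds into $GL_n(L(t))$ for some $n$. Finally $L(t)$ is a vector space of dimension $d=[L:\F_p]$ over $\F_p(t)$, so restriction of scalars yields an inclusion $GL_n(L(t))\hookrightarrow GL_{nd}(\F_p(t))$. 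Composing these embeddings exhibits $G$ as a linear group over $\F_p(t)$.

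There is no genuine obstacle: Theorem B does all the heavy lifting, and the two remaining ingredients, namely the wreath-product embedding of a finite-index extension and restriction of scalars along a finite field extension, are routine. The one subtlety worth flagging is the non-normality of $\Aut_1\,K^2$ in $\Aut\,K^2$, which rules out a naive induced-representation argument and forces the permutation-on-cosets construction used in (D.1).
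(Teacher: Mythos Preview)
Your proof is correct and follows the same strategy as the paper: deduce (D.1) from Theorem B via the finite index of $\Aut_1\,K^2$ in $\Aut\,K^2$, and deduce (D.2) by observing that any FG subgroup lies in $\Aut\,L^2$ for some finite subfield $L\subset\overline{\F}_p$ and then applying (D.1). The paper's proofs are terser, taking the finite-index-implies-linear and restriction-of-scalars steps for granted, whereas you spell these out explicitly and correctly flag the non-normality of $\Aut_1\,K^2$ (a point the paper does not mention).
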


Note that Corollary D.1 treats the case not covered by Theorem A.1.
Moreover, the question of the existence of nonlinear
FG subgroups of $\Aut\,K^2$ is solved by Theorem A.2 and Corollary D.2, except for
finite characteristic fields of transcendence degree $\geq 1$,
therefore we ask the following

\noindent {\bf Question.} {\it Let $K$ be a field
containing $\F_p(t)$. Does $\Aut\,K^2$ contains
nonlinear FG subgroups?}

The paper is organized as follows. In section 1,
the general notions concerning $\Aut\,K^2$ are defined,
and the classical result of van der Kulk \cite{vdK} 
is stated. In the next section, it is proved that
some linear groups obtained by amalgamation  are indeed
linear over a field. Therefore, it is possible to use
the theory of algebraic groups to show that some groups are not
linear. This is used to prove Theorem A.2 in section 3.

Borel and Tits \cite{BT} showed that certain group morphisms of
algebraic groups are semi-algebraic. A very simple form of their ideas are used to prove Theorem A.1, see Sections 4 and 5.

Next the proof of Theorem B and C is based one some  Ping-Pong ideas. These ideas were originally invented for the dynamic of groups with respect to 
the euclidean metric topologies \cite{FN}, but  they were used by Tits in the context of the ultrametric topologies \cite{T72}.

\bigskip\noindent
{\it Aknowledgements} J.P. Furter and R. Boutonnet informed us that they independently found a FG subgroup of $\Aut\, \Q^2$
which is not linear over a field \cite{BF}. We also heartily thank
S. Lamy for interesting comments and E. Zelmanov for an inspiring talk.

 \section{The van der Kulk Theorem}

In this section, we review the basic facts about the amalgamated products of groups. Then the we recall the classical 
van der Kulk Theorem.

\bigskip
\noindent {\it 1.1  Amalgamated products}
 
\noindent  Let $A$, $G_1$ and $G_2$ be groups. Their {\it free product} is denoted by $G_1 *G_2$, see \cite{MKS} ch.4.  Now let 
$A {\buildrel f_1\over\rightarrow} G_1$ and
$A {\buildrel f_2\over\rightarrow} G_2$ be two injective group morphisms and let $K\subset G_1 *G_2$ be the invariant subgroup
generated by the elements $f_1(a)f_2(a^{-1})$ when $a$ runs
over $A$. By definition the group $G_1*G_2/K$ is called 
the {\it amalgamated product} of $G$ and $H$ over $A$,
and it is denoted by $G_1 *_A G_2$, 
see e.g. \cite{S83}, ch. I. 
 
 In the literature, the amalgamated products are also called
free amalgamated products, see \cite {MKS} ch.8. 

Recall that the natural maps 
 $G_1\rightarrow\Gamma$ and $G_2\rightarrow\Gamma$ are
 injective, see the remark after the Theorem 1 of  ch. 1 in \cite{S83}. Hence, we will use a less formal terminology.
The group $A$ will be viewed as a common subgroup 
 of $G_1$ and $G_2$, and $G_1$ and $G_2$ will be viewed as
 subgroups of  $G_1 *_A G_2$.

  \bigskip \noindent 
{\it 1.2 Reduced words}

\noindent
The usual definition \cite{S83} of reduced words
is based on the right $A$-cosets. In order to avoid a confusion
between the set difference notation $A\setminus X$ 
and the $A$-orbits notation $A\backslash X$, we will
use a definition based on the left $A$-cosets.

Let $G_1$, $G_2$ be two groups sharing a common subgroup $A$,
and let $\Gamma=G_1 *_A G_2$.
Set $G_1^*=G_1\setminus A$,  $G_2^*=G_2\setminus A$
and let $T_1^*\subset G_1^* $ (respectively 
$T_2^*\subset G_2^* $) be a set of  representatives of  
$G_1^*/A$ (respectively of $G_2^*/A$).
Let $S$ be the set of finite alternative sequences of one's and two's, and let $\epsilon=(\epsilon_1,\dots,\epsilon_n)\in S$.

A {\it reduced word}  is a word
$(x_1,\dots x_n,x_0)$ where $x_0$ is in $A$, and
$x_i\in G_{\epsilon_i}^*$ for $i\geq 1$,
where the sequence $\epsilon=(\epsilon_1,\dots,\epsilon_n)$ 
lies in $S$. Moreover this sequence $\epsilon$ is called the
{\it  type
of the reduced word} $( x_1,\dots x_n,x_0)$.
Let ${\cal R}$ be the set of all reduced words.
The next lemma is well-known, see  
 e.g. \cite{S83}, Theorem 1.

\begin{lemma}\label{words} The map

\centerline{ $(x_1,\dots x_n,x_0)\in {\cal R} 
 \mapsto  x_1\dots x_n x_0 \in G_1*_A G_2$}
 
 \noindent is bijective.
 
 \end{lemma}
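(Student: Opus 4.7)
The plan is to prove bijectivity by handling surjectivity directly and injectivity via a group action argument. For surjectivity, since $\Gamma$ is generated by $G_1\cup G_2$, every element admits an expression $g_1g_2\cdots g_m$ with $g_j\in G_1\cup G_2$. Collapsing any two consecutive factors lying in the same $G_i$ into their product, one reduces to an alternating expression. Writing each alternating factor $g_j$ in the form $t_j a_j$ with $t_j\in T_{\epsilon_j}^*$ and $a_j\in A$ via the chosen coset representatives, and pushing the $A$-parts $a_j$ rightward through the next syllable (iterating the decomposition until the remainder is absorbed into a single element of $A$), one arrives at a reduced word whose image is the given element.

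For injectivity, the classical strategy is to construct a group action $\rho:\Gamma\to\mathrm{Sym}(\mathcal{R})$ with the key property that, for the trivial reduced word $e_A:=(e)$ and any reduced word $w=(x_1,\ldots,x_n,x_0)$, one has $\rho(x_1x_2\cdots x_nx_0)(e_A)=w$. Granted such $\rho$, injectivity is immediate: if two reduced words $w,w'$ map to the same element $\gamma\in\Gamma$, then $w=\rho(\gamma)(e_A)=w'$.

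To construct $\rho$, by the universal property of the amalgamated product it suffices to define homomorphisms $\rho_i:G_i\to\mathrm{Sym}(\mathcal{R})$ for $i=1,2$ whose restrictions to $A$ coincide. Given $g\in G_i$ and a reduced word $w=(x_1,\ldots,x_n,x_0)$ of type $(\epsilon_1,\ldots,\epsilon_n)$, set $\rho_i(g)(w)$ to be the result of formally prepending $g$ and reducing. Concretely, if $\epsilon_1\neq i$ (or $n=0$), decompose $g=ta$ with $t\in T_i^*\cup\{e\}$ and $a\in A$, introduce $t$ as a new leading syllable, and propagate $a$ into the next syllable $x_1\in G_{3-i}^*$ via the coset decomposition of $ax_1$, iterating until the $A$-remainder is absorbed into $x_0$; if $\epsilon_1=i$, form the product $gx_1\in G_i$ and either merge the leading syllable into the $A$-coset when $gx_1\in A$, or replace it by the coset representative of $gx_1$ in $T_i^*$ otherwise.

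The main obstacle is verifying that each $\rho_i$ is a genuine group homomorphism and that $\rho_1|_A=\rho_2|_A$. Both boil down to a mechanical but somewhat tedious case analysis: the identity $\rho_i(gh)(w)=\rho_i(g)\rho_i(h)(w)$ must be checked according to the type of $w$ and according to whether various intermediate products land in $A$ or in $G_i^*$, each sub-case ultimately reducing to an identity inside the single group $G_i$. Agreement on $A$ is transparent once the conventions are fixed, since for $a\in A$ both $\rho_1(a)$ and $\rho_2(a)$ leave the syllables $x_1,\ldots,x_n$ untouched and act on $x_0$ by left multiplication. Once $\rho$ is built, the identity $\rho(\phi(w))(e_A)=w$ follows by a straightforward induction on the length $n$, constructing the word syllable by syllable.
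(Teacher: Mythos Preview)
The paper does not give its own proof of this lemma; it simply cites Serre \cite{S83}, Theorem~1. Your argument is exactly the classical one found there (with left and right swapped to match the paper's coset convention), so there is no divergence in strategy to discuss.

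One assertion in your sketch is incorrect, however. You claim that for $a\in A$ both $\rho_1(a)$ and $\rho_2(a)$ leave the syllables $x_1,\ldots,x_n$ untouched and act only on $x_0$ by left multiplication. Under the paper's convention this is false: the $T_i^*$ are representatives of the \emph{left} cosets $G_i^*/A$, and the $A$-tail $x_0$ sits on the \emph{right}. For $a\in A$ and $x_1\in T_{\epsilon_1}^*$, the product $ax_1$ lies in the coset $ax_1A$, which coincides with $x_1A$ only when $x_1^{-1}ax_1\in A$; since $A$ is not assumed normal in $G_{\epsilon_1}$, the leading syllable generically changes, and the resulting $A$-remainder must then be pushed through every subsequent syllable before reaching $x_0$. (Your statement would be correct under Serre's original convention, with right-coset transversals and the $A$-factor placed on the left.) The equality $\rho_1\vert_A=\rho_2\vert_A$ does hold, but for a different reason: when $g=a\in A$, your two branches ``$\epsilon_1=i$'' and ``$\epsilon_1\neq i$'' both collapse to the same propagation rule---decompose $ax_1=t_1'a_1'$ with $t_1'\in T_{\epsilon_1}^*$, replace $x_1$ by $t_1'$, feed $a_1'$ into $x_2$, and iterate down to $x_0$---so the two restrictions agree by construction rather than because the syllables are fixed.
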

 
 Set $\Gamma=G_1*_A G_2$. For $\gamma\in \Gamma\setminus A$
 there is some $n\geq 1$, some 
 $\epsilon\in S$ and some $g_i\in G_{\epsilon_i}^*$
 such that $\gamma=g_1...g_n$. It follows from
 the remark of \cite {S83} (after Theorem 1, ch. 1) that
 $\gamma=x_1...x_nx_0$ for some reduced word 
 $(x_1...x_nx_0)$ of type $\epsilon$. Since $\epsilon$ and $n$ are determined by $\gamma$, the sequence $\epsilon$ is called 
{\it the type} of $\gamma$ and 
the integer $l(\gamma):=n$ is called it's {\it length}.

 \bigskip \noindent 
{\it 1.3 Amalgamated product of subgroups}

\noindent
Let $G_1$, $G_2$ be two groups sharing a common subgroup $A$.
Let $G_1'\subset G_1$ and $G_2'\subset G_2$ be subgroups, with the property that $G_1'\cap A=G'_2\cap A$. Set 
$A':=G_1'\cap A=G'_2\cap A$.

 \begin{lemma}\label{subamal} With the previous hypotheses,
 the natural map 
 
 \centerline{$G_1'*_{A'} G_2'\mapsto G_1*_A G_2$}
 
 \noindent is injective. Moreover if the 
 the natural maps $G'_1/A'\to G/A$ and $G_2'/A'\to G_2/A$ are bijective, then we have
 
\centerline{$G_1*_A G_2/G_1'*_{A'} G_2'\simeq A/A'$.}

 \end{lemma}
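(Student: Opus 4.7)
The plan is to deduce both claims from the reduced word normal form of Lemma~\ref{words}, after arranging the coset representatives in the two amalgams compatibly. The key preliminary observation is that $G_i'\cap A=A'$ makes the natural map ${G_i'}^*/A'\to G_i^*/A$ injective: if $x,y\in{G_i'}^*$ lie in a common $A$-coset then $x^{-1}y\in A\cap G_i'=A'$, so they already lie in a common $A'$-coset. Hence I may first pick representatives ${T_i'}^*\subset{G_i'}^*$ for ${G_i'}^*/A'$ and then enlarge them to representatives $T_i^*\subset G_i^*$ for $G_i^*/A$ with ${T_i'}^*\subset T_i^*$.

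For assertion (1), any reduced word $(x_1,\dots,x_n,x_0)$ for $G_1'*_{A'}G_2'$ relative to ${T_i'}^*$ and $A'$ automatically satisfies $x_i\in{T_{\epsilon_i}'}^*\subset T_{\epsilon_i}^*$ and $x_0\in A'\subset A$, so it is also a reduced word for $G_1*_A G_2$ of the same type. Applying Lemma~\ref{words} in both groups then identifies the natural map $G_1'*_{A'}G_2'\to G_1*_A G_2$ with an inclusion of sets of reduced words, which is injective.

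For assertion (2), the stronger bijectivity of $G_i'/A'\to G_i/A$ means every $A$-coset of $G_i$ already meets $G_i'$, and so lets me take $T_i^*={T_i'}^*$. Given $\gamma\in G_1*_A G_2$ with reduced form $(x_1,\dots,x_n,x_0)$, every $x_i$ then lies in $G_{\epsilon_i}'$, so the prefix $h:=x_1\cdots x_n$ lies in the subgroup of $G_1*_A G_2$ generated by $G_1'\cup G_2'$, which by (1) is the injected image of $G_1'*_{A'}G_2'$. Hence $\gamma=h\cdot x_0$ with $h\in G_1'*_{A'}G_2'$ and $x_0\in A$; applying the same decomposition to $\gamma^{-1}$ shows that $\gamma\in A\cdot(G_1'*_{A'}G_2')$ as well, so every left or right coset of $G_1'*_{A'}G_2'$ in $G_1*_A G_2$ has a representative in $A$. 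The resulting map $A/A'\to(G_1*_A G_2)/(G_1'*_{A'}G_2')$ is then injective because $A\cap(G_1'*_{A'}G_2')=A'$: an element of $A$ is its own length-$0$ reduced form in $G_1*_A G_2$, and by (1) if it also lies in $G_1'*_{A'}G_2'$ its length-$0$ reduced form there must have last letter in $A'$.

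The only genuinely delicate point is the compatibility of coset representatives, which is exactly where the hypothesis $G_1'\cap A=G_2'\cap A$ is essential; the rest is direct bookkeeping with Lemma~\ref{words}.
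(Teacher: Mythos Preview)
Your proof is correct and follows essentially the same approach as the paper's: both arguments rest on choosing compatible coset representatives $T_i'^*\subset T_i^*$ and then identifying the reduced words of the smaller amalgam as a subset of those of the larger via Lemma~\ref{words}. Your treatment of the second assertion is more explicit than the paper's (which simply asserts that the coset identification ``follows easily'' once one observes that $\mathcal R'$ consists of the reduced words in $\mathcal R$ with $x_0\in A'$), but the underlying idea is the same.
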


\begin{proof}  For $i=1,2$ set $G_i^*=G_i\setminus A$,
$G_i^{'*}=G'_i\setminus A'$. Let 
$T^*_i\subset G_i^*$ (respectively $T^{'*}_i\subset G_i^{'*}$)
be a set of representatives of
$G_i^*/A$ (respectively of 
$G_i^{'*}/A'$).

Since the maps
$G_i'/A'\to G_i/A$ are injective, it can be assumed that  $T^{'*}_i\subset T^{*}_i$. Let $\cal R$ 
(respectively $\cal R'$) be the set of reduced words of
$G_1*_A G_2$ (respectively of $G_1'*_{A'} G_2'$).
By definition, we have $\cal R'\subset \cal R$, thus
by lemma \ref{subamal} the map $G_1'*_{A'} G_2'\mapsto G_1*_A G_2$
is injective.

Moreover, assume that the  maps $G'_1/A'\to G/A$ and $G_2'/A'\to G_2/A$ are bijective. It follows that ${\cal R'}$ is the set of
all reduced words $(x_1,\dot,x_n,x_0)\in{\cal R}$ such that
$x_0\in A'$. It follows easily that

\centerline{$G_1*_A G_2/G_1'*_{A'} G_2'\simeq A/A'$.}

\end{proof}

 \bigskip\noindent
 {\it 1.4 The group $\Aut\,K^2$} 
 
 \noindent Let $K$ be a field.
 Recall that $\Aut\,K^2$ is the group of polynomial automorphisms of
 $K^2$, so an element of $\Aut\,K^2$ is a polynomial map
 $\phi:(x,y)\mapsto (f(x,y), g(x,y))$, where $f,\,g\in K[x,y]$ which admits a polynomial inverse. When $\ch\,K=0$, for the existence of a polynomial inverse it is enough that the naive map
 $\phi: K^2\to K^2$  is bijective. When $\ch\,K\neq 0$, this is not enough. Even when $K$ is infinite, the existence
 of a polynomial inverse requires some additional
separability condition.
 
 \bigskip\noindent
 {\it 1.5 The subgroup $\Elem(K^2)$ of elementary automorphism of $K^2$}
 
\noindent By definition, an {\it elementary automorphism}  of $K^2$ is an automorphism $\phi\in \Aut\,K^2$ of the form 
 
 \centerline{$\phi:(x,y)\mapsto (z_1 x +t, z_2 y +f(x))$}
 
 \noindent for some $z_1, z_2\in K^*$, some $t\in K$ and some $f\in K[x]$. The group of elementary automorphism is denoted
 $\Elem (K^2)$. Also set
 
 \centerline{$\Elem_0 (K^2)=\Elem (K^2)\cap \Aut_0\,K^2$, and}
 
 \centerline{$\Elem_1 (K^2)=\Elem (K^2)\cap \Aut_1\,K^2$.}

\noindent For any $F\in K[t]$, let $\mu(F)$ be the automorphism

\centerline{ $\mu(F):(x,y)\mapsto (x,y+x^2 F(x))$.}

\noindent Clearly $\mu:K[t]\to \Elem_1 (K^2)$ is group isomorphism.

 \bigskip\noindent
 {\it 1.6 The affine group $\Aff(K^2)$}

\noindent Let $\Aff(K^2)\subset \Aut\,K^2$ 
(respectively $\GL(K^2)\subset \Aut\,K^2$) be the subgroup
of affine automorphisms (respectively linear automorphisms)  of $K^2$.  Set 
$B=\Aff(K^2)\cap \Elem(K^2)$. Equivalently, 
$B$ is the group of affine transformations of the form

\centerline{$(x,y)\in K^2\mapsto (z_1 x +u, z_2y +cx +v)$}

\noindent for some $z_1$ $z_2\in K^*$ and $c,\,u,\,v\in K$.
Set $B_0=B\cap GL(K^2)$
Indeed $B$ and $B_0$ are Borel subgroups of 
$\Aff(K^2)$ and $GL(K^2)$.
Moreover we have

\centerline{$\Aff(K^2)/B=GL(K^2)/B_0\simeq \P^1_K$.}

\bigskip
\noindent
{\it 1.7 The van der Kulk Theorem}
 
\noindent Recall the classical  

\begin{vdKthm}\cite{vdK} We have 

\centerline{$\Aut\,K^2\simeq \Aff(K^2)*_{B}\Elem(K^2)$, and}

\centerline{$\Aut_0\,K^2\simeq \GL(K^2)*_{B_0}\Elem_0(K^2)$.}

\end{vdKthm}

\section{Amalgamated Products and
Linearity.}

\noindent In this section, it is shown that, under a mild assumption, an amalgamated product $G_1*_A G_2$
which is linear over a ring is also linear over a field,
see Lemma \ref{criterion}.

\bigskip 
\noindent {\it 2.1 Linearity Properties}

\noindent
Let $R$ be a ring. For any $n\geq n$, let
$GL(n,R)$ be the group of invertible $n\times n$ matrices
with entries in $R$. Indeed for a field $K$,
the notation $GL(K^2)$ denotes the subgroup of $\Aut\,K^2$ of linear automorphisms of $K^2$ although 
$GL(2,K)$ is the abstract linear group of degree $2$.

A group $\Gamma$ is called 
{\it linear over $R$} if there is a group embbeding
$\Gamma\subset GL(n, R)$ for some integer $n$. The group
$\Gamma$ is called {\it linear} (or {\it linear over a ring}) if it is linear over some ring $R$. Similarly $\Gamma$ is called {\it linear over a field} if it is linear over some field $K$.

For a group $\Gamma$,  the strongest form of  linearity is the linearity over a field. On the opposite, there are also   groups 
$\Gamma$ which contain a FG  subgroup which is
not linear, even over a ring: these groups are nonlinear in the strongest sense.

\bigskip\noindent
{\it 2.2 Minimal embeddings}

\noindent Let $R$ be a commutative ring, let $n\geq 1$ be an integer and let
$\Gamma$ be a subgroup of $GL(n,R)$. For any ideal $J$ of $R$, let
$GL(n,J)$ be the kernel of the natural map
$GL(n,R)\to GL(n,R/J)$. Any element $g\in GL(n,J)$ can be written as $g=\id+A$, where $A$ is a $n$-by-$n$ matrix with entries in 
$J$.

The embbeding $\Gamma\subset GL(n,R)$ is called {\it minimal} if
for any ideal $J\neq 0$ we have $\Gamma\cap GL(n,J)\neq \{1\}$.

\begin{lemma} \label{minimal}Let $\Gamma\subset GL(n,R)$. There exist an ideal $J$ with
$\Gamma\cap GL(n,J)=\{1\}$ such that the induced embbeding
$\Gamma\to\GL(n,R/J)$ is minimal.
\end{lemma}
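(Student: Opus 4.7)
The plan is to apply Zorn's lemma to the collection
\[
\mathcal{J}=\{J\triangleleft R\mid \Gamma\cap GL(n,J)=\{1\}\},
\]
ordered by inclusion, and to take $J$ to be a maximal element. Note that $(0)\in\mathcal{J}$ since the trivial ideal gives $GL(n,(0))=\{\id\}$, so $\mathcal{J}$ is non-empty.

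First I would verify that $\mathcal{J}$ is inductive. Given a chain $\{J_\alpha\}\subset\mathcal{J}$, set $J^*=\bigcup_\alpha J_\alpha$, which is an ideal of $R$. If $g\in\Gamma\cap GL(n,J^*)$, write $g=\id+A$ where all entries of $A$ lie in $J^*$. Since $A$ has only finitely many entries and the $J_\alpha$ form a chain, all these entries already lie in some single $J_{\alpha_0}$, hence $g\in\Gamma\cap GL(n,J_{\alpha_0})=\{1\}$. Thus $J^*\in\mathcal{J}$, and Zorn applies to yield a maximal element $J$.

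Next, $\Gamma\cap GL(n,J)=\{1\}$ means the composition $\Gamma\hookrightarrow GL(n,R)\to GL(n,R/J)$ is injective, so the induced embedding $\Gamma\hookrightarrow GL(n,R/J)$ is well defined. It remains to check minimality. The ideals of $R/J$ are precisely $\bar{J'}:=J'/J$ for ideals $J'\supset J$ of $R$, and under the reduction map $\pi:R\to R/J$ one has the identity
\[
\pi^{-1}\bigl(GL(n,\bar{J'})\bigr)=GL(n,J'),
\]
since a matrix $g\in GL(n,R)$ reduces to an element of the form $\id+\bar A$ with entries of $\bar A$ in $\bar{J'}$ iff $g=\id+A$ with entries of $A$ in $J'$.

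Suppose, for contradiction, that the embedding $\Gamma\hookrightarrow GL(n,R/J)$ is not minimal: there exists a nonzero ideal $\bar{J'}$ of $R/J$ (so $J'\supsetneq J$) with $\Gamma\cap GL(n,\bar{J'})=\{1\}$ in $GL(n,R/J)$. For any $g\in\Gamma\cap GL(n,J')$, its image in $GL(n,R/J)$ lies in $GL(n,\bar{J'})$, hence equals the identity; combined with the injectivity of $\Gamma\to GL(n,R/J)$ we get $g=1$. Thus $\Gamma\cap GL(n,J')=\{1\}$, so $J'\in\mathcal{J}$, contradicting the maximality of $J$.

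There is no real obstacle; the only point requiring care is the passage of $\mathcal{J}$ being closed under unions of chains, which hinges on the finite-entry observation, and the correspondence of ideals in $R/J$, which gives minimality via the maximality of $J$.
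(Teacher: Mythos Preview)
Your proof is correct and follows the same Zorn's-lemma approach as the paper: take a maximal ideal $J$ in the set of ideals with $\Gamma\cap GL(n,J)=\{1\}$, and deduce minimality of the quotient embedding from maximality of $J$. You supply more detail than the paper does (the finite-entry argument for chains and the explicit ideal correspondence for minimality), but the strategy is identical.
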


\begin{proof} Since $R$ is not necessarily noetherian,
the  proof requires Zorn Lemma.

Let $\cal S$ be the set of all ideals $J$
of $R$ such that $\Gamma\cap GL(n,J)=\{1\}$. The set
$\cal S$ is partially ordered  with respect to the inclusion.  For any chain $\cal C\subset\cal S$, the ideal 
$\cup_{I\in{\cal C}}\,I$ belongs to $\cal S$. Therefore Zorn Lemma implies that $\cal S$ contains a maximal element $J$. It follows that the induced embbeding $\Gamma\to\GL(n,R/J)$ is minimal.
\end{proof}

\bigskip\noindent
{\it 2.3 Groups with trivial commutants}

\noindent
By definition, a group $\Gamma$ has
{trivial commutants} if the commutant of any nontrivial normal subgroup $K$ of $\Gamma$ is trivial. 
Equivalently, if $K_1$ and $K_2$ is are commuting invariant 
subgroups of $\Gamma$, then one of them is trivial.

\begin{lemma}\label{linearity} Let  $\Gamma$ be a  group with trivial commutants.

If $\Gamma$ is linear over a ring, 
then $\Gamma$ is also linear over a field.
\end{lemma}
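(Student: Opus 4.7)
The plan is to combine Lemma \ref{minimal} with the trivial-commutants hypothesis to force the coefficient ring to be an integral domain, and then pass to its field of fractions. Concretely, suppose $\Gamma \subset GL(n,R)$ for some commutative ring $R$ and some integer $n \geq 1$. By Lemma \ref{minimal}, after replacing $R$ by a suitable quotient $R/J$, I may assume that the embedding is minimal, i.e.\ $\Gamma \cap GL(n,I) \neq \{1\}$ for every nonzero ideal $I$ of $R$.

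The key elementary observation I would use is the following: if $I_1, I_2$ are ideals of $R$ with $I_1 I_2 = 0$, then the subgroups $GL(n,I_1)$ and $GL(n,I_2)$ of $GL(n,R)$ commute elementwise. Indeed, writing $g_k = \id + A_k$ with the entries of $A_k$ in $I_k$, both products $g_1 g_2$ and $g_2 g_1$ equal $\id + A_1 + A_2$, since the entries of $A_1 A_2$ and $A_2 A_1$ lie in $I_1 I_2 = 0$.

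I would then set $N_k := \Gamma \cap GL(n,I_k)$ for $k = 1, 2$. Each $N_k$ is an invariant subgroup of $\Gamma$ (it is the intersection of $\Gamma$ with a subgroup normal in $GL(n,R)$), and the computation above shows that $N_1$ and $N_2$ commute. If both $I_1$ and $I_2$ were nonzero, the minimality of the embedding would force both $N_k$ to be nontrivial, contradicting the trivial-commutants hypothesis. Hence $R$ admits no pair of nonzero ideals with zero product; specializing to principal ideals $I_k = (a_k)$, this forces $R$ to be an integral domain, and the canonical inclusion $GL(n,R) \hookrightarrow GL(n, \mathrm{Frac}(R))$ then exhibits $\Gamma$ as linear over the field $\mathrm{Frac}(R)$.

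The only step requiring real care is the commutativity computation in the second paragraph, but it is a routine consequence of the pleasant compatibility between the normal subgroups $GL(n,I)$ and ideal multiplication; everything else is a formal application of the hypotheses and of Lemma \ref{minimal}.
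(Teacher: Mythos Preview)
Your proof is correct and follows essentially the same approach as the paper: reduce to a minimal embedding via Lemma \ref{minimal}, use the commutation of $GL(n,I_1)$ and $GL(n,I_2)$ when $I_1I_2=0$ together with the trivial-commutants hypothesis to force $R$ to have no pair of nonzero ideals with zero product, and then pass to the fraction field. Your explicit reduction to principal ideals to conclude that $R$ is an integral domain is exactly what the paper means when it says ``$R$ is prime.''
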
 

\begin{proof} By hypothesis we have
$\Gamma\subset GL(n,R)$ for some commutative ring $R$.
By lemma \ref{minimal}, it can be assumed that the embbeding 
$\Gamma\to GL(n,R)$ is minimal. 

Let $I_1,\,I_2$ be ideals 
of $R$ with $I_1.I_2=0$. Let $A_1$ (respectively $A_2$) be
an arbitrary $n$-by-$n$ matrix with entries in $I_1$ 
(respectively in $I_2$) and set $g_1=1+A_1$, 
$g_2=1+A_2$. Since we have $A_1.A_2=A_2.A_1=0$, 
we have $g_1.g_2=g_2.g_1$, therefore 
$GL(n,I_1)$ and $GL(n,I_2)$ are commuting invariant subgroups
of $GL(n,R)$.

Since $K_1=\Gamma\cap GL(n,I_1)$ and  $K_2=\Gamma\cap GL(n,I_2)$
are commuting  invariant subgroups of $\Gamma$,  one of them is trivial. By minimality hypothesis, $I_1$ or $I_2$ is trivial.
Thus $R$ is prime.

It follows that $\Gamma\subset GL(n,K)$, where 
$K$ is the fraction field of $R$.
\end{proof}

\bigskip\noindent
{\it 2.4 The hypothesis ${\cal H}$}

\noindent
Let  $G_1$, $G_2$ be two groups sharing a common subgroup $A$ and
 set $\Gamma=G_1*_A G_2$.

 Let $S$ be the set of all finite sequences
 ${\bf i}=i_1,\dots,i_n$ of alternating $1$ and $2$.
For $i,\,j\in\{1,2\}$, let $S_{i,j}$ be
be the subset of all $\epsilon=(\epsilon_1,\dots,\epsilon_n)\in S$
starting with $i$ and ending with $j$. 
Let $\Gamma_{i,j}$ be the set of all $\gamma\in \Gamma$
of type $\epsilon$ for some $\epsilon\in S_{i,j}$.
Therefore we have
 
\centerline{ $\Gamma=A\sqcup \Gamma_{1,1}
 \sqcup \Gamma_{2,2}\sqcup \Gamma_{1,2}\sqcup \Gamma_{2,1}$.}

 By definition, the amalgamated product 
 $G_1*_A\,G_2$  is called {\it trivial} if
 $G_1=A$, or $G_2=A$ or if both $G_1$ and $G_2$ are isomorphic to
 $\Z/2Z$. The latter case is the uninteresting dihedral group
 $\Z/2Z*\Z/2Z$. 
 
 \noindent Let consider the following  hypothesis
 
 $({\cal H})$\hskip3mm For any $a\in A$ with $a\neq 1$, there is 
 $\gamma\in \Gamma$ such that
 $a^\gamma\notin A$
 
 \noindent where, as usual, $a^\gamma:=\gamma a \gamma^{-1}$.

 \begin{lemma}\label{conj} Let $\Gamma=G_1*_A\,G_2$ be a 
 nontrivial
 amalgamated product satisfying the hypothesis ${\cal H}$. Let $g\in\Gamma$ with $g\neq 1$.
 
 There are 
 $\gamma_1,\,\gamma_2\in \Gamma$ such that
 
 \centerline {$g^{\gamma_1}\in \Gamma_{1,1}$ and
 $g^{\gamma_2}\in \Gamma_{2,2}$}.
 
 In particular $\Gamma$ has trivial commutants.
 \end{lemma}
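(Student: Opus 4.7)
The plan is a case analysis on the reduced word of $g$, using hypothesis $({\cal H})$ only to reduce to the situation $g\notin A$, and then exhibiting explicit conjugators built from the letters of $g$ together with a few auxiliary elements of $G_1^*\cup G_2^*$. Nontriviality of the amalgamated product guarantees that at least one of $[G_1:A]$ and $[G_2:A]$ is $\geq 3$, which is what gives enough room to perform the needed conjugations. I would write the reduced form of $g$ as $g=g_1g_2\cdots g_n$ with $g_i\in G_{\epsilon_i}^*$ and alternating type $(\epsilon_1,\dots,\epsilon_n)$.

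If $g\in A\setminus\{1\}$, apply $({\cal H})$ to obtain $\gamma\in\Gamma$ with $g^\gamma\notin A$ and replace $g$ by $g^\gamma$; from now on $g\in\Gamma_{i,j}$ for some $i,j\in\{1,2\}$. In the easy case $i=j$, the element $g$ itself lies in $\Gamma_{i,i}$, and for $k\neq i$ any $h\in G_k^*$ (which exists since $G_k\neq A$) gives a conjugate $hgh^{-1}$ that reduces to $(h,g_1,\dots,g_n,h^{-1})\in\Gamma_{k,k}$, because the extremal letters of $g$ lie in $G_i\neq G_k$ so no cancellation happens at the junctions. The essential case is $g\in\Gamma_{1,2}$ (the case $\Gamma_{2,1}$ is symmetric), where $n$ is even, $g_1\in G_1^*$ and $g_n\in G_2^*$. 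To produce $\gamma_1$ with $g^{\gamma_1}\in\Gamma_{1,1}$ I would split on $[G_1:A]$. If $[G_1:A]\geq 3$, the two distinct right cosets $A$ and $Ag_1^{-1}$ of $A$ in $G_1$ do not cover $G_1$, so I can choose $h\in G_1\setminus(A\cup Ag_1^{-1})$; then setting $\gamma_1=h$, the product $hgh^{-1}$ reduces to $(hg_1,g_2,\dots,g_n,h^{-1})\in\Gamma_{1,1}$. If instead $[G_1:A]=2$, nontriviality forces $[G_2:A]\geq 3$; picking $h_1\in G_1^*$ together with $h_2\in G_2\setminus(A\cup Ag_n)$ and setting $\gamma_1=h_1h_2$, one checks that $\gamma_1 g\gamma_1^{-1}$ reduces to $(h_1,h_2,g_1,\dots,g_{n-1},g_nh_2^{-1},h_1^{-1})\in\Gamma_{1,1}$, the only merger being $g_nh_2^{-1}\in G_2^*$. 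Constructing $\gamma_2$ with $g^{\gamma_2}\in\Gamma_{2,2}$ is the mirror argument, branching on $[G_2:A]$.

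For the \emph{in particular} assertion, suppose $K_1,K_2$ are non-trivial commuting invariant subgroups of $\Gamma$. Pick $g_i\in K_i\setminus\{1\}$; invariance of the $K_i$ together with the main statement yields commuting elements $g_1'\in K_1\cap\Gamma_{1,1}$ and $g_2'\in K_2\cap\Gamma_{2,2}$. The extremal letters of $g_1'$ lie in $G_1$ and those of $g_2'$ lie in $G_2$, so concatenation produces no cancellation, placing $g_1'g_2'$ in $\Gamma_{1,2}$ and $g_2'g_1'$ in $\Gamma_{2,1}$; these two sets are disjoint by the uniqueness in Lemma~\ref{words}, contradicting $g_1'g_2'=g_2'g_1'$. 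The only genuinely delicate step is the subcase $[G_1:A]=2$ (and its mirror): here no conjugator internal to $G_1$ suffices, and one must absorb a troublesome letter of $g$ by conjugating with an element of $\Gamma$ of length two.
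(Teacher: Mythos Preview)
Your proof is correct and follows essentially the same strategy as the paper: a case analysis on the type of the reduced word of $g$, conjugating by short words from $G_1^*\cup G_2^*$ and exploiting that at least one of $[G_i:A]$ is $\geq 3$. The only organizational difference is that the paper fixes once and for all the side with index $\geq 3$ (say $[G_2:A]\geq 3$), handles $g\in\Gamma_{2,1}$ with a single length-one conjugator from $G_2^*$, and then reduces $\Gamma_{1,2}$ to $\Gamma_{2,1}$ by passing to $g^{-1}$, thereby avoiding your length-two conjugator in the subcase $[G_1:A]=2$.
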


 \begin{proof}

  First it should be noted that $A$ cannot be 
 simultaneously a subgroup of index 2 in $G_1$ and in $G_2$.
 Otherwise, $A$ would be an invariant subgroup of $G_1$ and $G_2$,
 the hypothesis  ${\cal H}$ would imply that $A=\{1\}$
 and $\Gamma$ would be the trivial amalgamated product
 $\Z/2Z*\Z/2Z$. Hence it can be assumed that $G_2/A$ contains
 at least $3$ elements. 
 
 Now we prove the first claim, namely that any nontrivial
 conjugacy class intersects $\Gamma_{1,1}$ and $\Gamma_{2,2}$.

 It is clear that
 $G^*_i.\Gamma_{j,k}\subset \Gamma_{i,k}$
  whenever $i\neq j$. Similarly, we have
 $\Gamma_{i,j}.G^*_k\subset \Gamma_{i,k}$
 whenever $j\neq k$. 
 
 By hypothesis, $G^*_1$ is not empty. Let 
 $\gamma_1\in G^*_1$. We have 
 $\Gamma_{2,2}^{\gamma_1}\subset\Gamma_{1,1}$. Simlarly we have
 $\Gamma_{1,1}^{\gamma_2}\subset\Gamma_{2,2}$ for some
 $\gamma_2\in G_2^*$. Therefore the claim is proved for
 $g\in \Gamma_{1,1}\cup \Gamma_{2,2}$.
 
It is now enought to prove that for any 
 $g\in A\sqcup \Gamma_{1,2}\sqcup\Gamma_{2,1}$ with $g\neq 1$,
 there is $\gamma\in \Gamma$ such that $g^\gamma$ belongs to
 $\Gamma_{1,1}\cup \Gamma_{2,2}$. 
 
 Assume now $g\in \Gamma_{2,1}$. We have $g=u.v$ for some
 $u\in G^*_2$ and $v\in\Gamma_{1,1}$. Since 
 $[G_2:A]\geq 3$, there is $\gamma\in G^*_2$ such that
 $\gamma.u$ belongs to $G^*_2$. It follows that
 $\gamma.g$ belongs to $\Gamma_{2,1}$, and therefore
 $g^\gamma$ belongs to $\Gamma_{2,2}$. 
 
 Next, for $g\in \Gamma_{1,2}$, it inverse
 $g^{-1}$ belongs to $\Gamma_{1,2}$ and
  the claim follows from the previous point.
 
 Last, for $g\in A$,  there is $\gamma\in \Gamma$ such that $g^\gamma$ is not in $A$, by hypothesis ${\cal H}$. Thus $g^\gamma$ belongs to $\Gamma_{i,j}$  for some
 $i,\,j$. So $g$ is conjugate to some element in
 $\Gamma_{1,1}\cup\Gamma_{2,2}$ by the previous considerations.
 
 Next we prove that $\Gamma$ has trivial commutants.
 Let $K_1,\,K_2$ be nontrivial invariant subgroups. By the previous point, there are elements $g_1,\,g_2$ with
 
 \centerline{$g_1\in K_1\cap \Gamma_{1,1}$ and
 $g_2\in K_2\cap \Gamma_{2,2}$.}
 
Since we have $g_1 g_2\in \Gamma_{1,2}$ and $g_2 g_1\in \Gamma_{2,1}$, it follows that $g_1g_2\neq g_2g_1$. Therefore
$K_1$ and $K_2$ do not commute.
 
\end{proof}

 As an obvious corollary of Lemmas \ref{linearity} and
\ref{conj}, we get

\begin{lemma}\label{criterion} Let $\Gamma=G_1*_A\,G_2$ be a 
nontrivial amalgamated product satisfying the hypothesis ${\cal H}$.

If $\Gamma$ is linear over a ring, then it is linear over a field.
\end{lemma}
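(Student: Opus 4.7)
The statement is the immediate conjunction of the two preceding lemmas, and the paper explicitly flags it as an ``obvious corollary.'' So my plan is not to reprove anything, but simply to chain the two ingredients in the right order.

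First I would invoke Lemma \ref{conj}: since $\Gamma = G_1 *_A G_2$ is a nontrivial amalgamated product satisfying hypothesis $(\mathcal{H})$, the conclusion of that lemma gives, in particular, that $\Gamma$ has trivial commutants (this is stated as the ``in particular'' part of Lemma \ref{conj}, deduced from the fact that any nontrivial conjugacy class meets both $\Gamma_{1,1}$ and $\Gamma_{2,2}$, together with the observation that $\Gamma_{1,1}\cdot\Gamma_{2,2}\subset \Gamma_{1,2}$ while $\Gamma_{2,2}\cdot\Gamma_{1,1}\subset \Gamma_{2,1}$, so elements picked one from each factor cannot commute).

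Second, with ``trivial commutants'' in hand, I would feed $\Gamma$ into Lemma \ref{linearity}. The hypothesis of that lemma is exactly that $\Gamma$ has trivial commutants and is linear over a ring; its conclusion is that $\Gamma$ is linear over a field. Both hypotheses are now available --- the first from step one, the second directly from the assumption of Lemma \ref{criterion} --- so the conclusion follows.

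There is no genuine obstacle here; the work was already done in Lemmas \ref{linearity} and \ref{conj}. The only thing worth double-checking is that the ``trivial amalgamated product'' cases excluded in Lemma \ref{conj} (namely $G_1 = A$, $G_2 = A$, or $G_1 \cong G_2 \cong \Z/2\Z$) are also excluded in the hypothesis of Lemma \ref{criterion}, which they are by the word ``nontrivial.'' So the proof is a one-line citation of the two previous lemmas.
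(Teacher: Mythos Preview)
Your proposal is correct and matches the paper's approach exactly: the paper itself presents Lemma~\ref{criterion} as an ``obvious corollary of Lemmas~\ref{linearity} and~\ref{conj}'' with no further argument, and your chaining of these two lemmas is precisely that corollary spelled out.
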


\section{A nonlinear FG subgroup of $\Aut_0\,\Q^2$.}

First we define a certain FG group $\Gamma=G_1*_A G_2$ and we will show that $\Gamma$ is nonlinear, even over a ring. Then we see that $\Gamma$ is
a subgroup of $\Aut_0\,\Q^2$ and therefore $\Aut_0\,K^2$ contains
many nonlinear FG subgroups for any characteristic zero field 
$K$, what proves Theorem A.2.

To show that $\Aut\,\Q^2$ is not linear over a field,
Cornulier uses a nonFG subgroup $G_{Cor}\subset \Aut\,\Q^2$ which is locally nilpotent 
but not nilpotent and therefore not linear over a field
\cite{C}. Nevertheless, the group
$G_{Cor}$ is linear over a ring, as it is shown at the end of the section.

\bigskip
\noindent {\it 3.1 Quasi-unipotent endomorphisms}

\noindent Let $V$ be a finite dimensional vector space over an algebraically closed field $K$. An element $u\in GL(V)$ is called
{\it quasi-idempotent} if all its eigenvalues are roots of one.
The {\it quasi-order} of a quasi-idempotent endomorphism
$u$ is the smallest positiver integer $n$ such that $u^n$ is unipotent.

\begin{lemma}\label{unipotent} Let $u\in GL(V)$. If $u$ and $u^2$ are conjugate,
then $u$ is quasi-unipotent and its quasi-order is odd.
If moreover $u$ has infinite order, then we have $\ch\,K=0$.
\end{lemma}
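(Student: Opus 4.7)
The plan is to translate the conjugacy hypothesis into a statement about multisets of eigenvalues, then use a finiteness/maximality argument to control their orders.

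First I would observe that, since conjugate operators have the same characteristic polynomial, $u$ and $u^2$ share the same multiset of eigenvalues. Denoting this multiset by $\Lambda$, the map $\sigma\colon\lambda\mapsto\lambda^2$ is therefore a bijection $\Lambda\to\Lambda$. Because $\Lambda$ is finite, every $\lambda\in\Lambda$ has a finite $\sigma$-orbit, so $\lambda^{2^a}=\lambda^{2^b}$ for some $a<b$; this forces $\lambda$ to be a root of unity. Hence every eigenvalue of $u$ is a root of unity, i.e.\ $u$ is quasi-unipotent, which is the first assertion.

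Next I would pin down the parity of the quasi-order $N$. By definition $N=\lcm\{\ord(\lambda):\lambda\in\Lambda\}$, so $N$ is odd iff every $\ord(\lambda)$ is odd. Suppose for contradiction that some $\ord(\lambda)$ is even, and define the multiset $M=\{v_2(\ord(\lambda)):\lambda\in\Lambda\}\subset\Z_{\geq 0}$, where $v_2$ is the $2$-adic valuation. Squaring acts on $M$ by $k\mapsto\max(k-1,0)$: indeed $v_2(\ord(\lambda^2))=v_2(\ord(\lambda))-1$ when the order is even, and $=0$ when it is odd. Since $u$ and $u^2$ are conjugate, this map must be a bijection of the multiset $M$. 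Letting $k_0=\max M$, its preimages in $\Z_{\geq 0}$ under the map are $k_0+1$ alone (for $k_0\geq 1$), so $k_0$ appears in the image with multiplicity equal to the multiplicity of $k_0+1$ in $M$, which is $0$ by maximality. This contradicts the fact that $k_0$ has positive multiplicity in $M$. Hence $N$ is odd.

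For the last assertion, suppose $\ch K=p>0$ and set $w=u^N$. Then $w$ is unipotent, so $w=\id+n$ with $n$ nilpotent, say $n^d=0$. Using Frobenius, $w^{p^k}=\id+n^{p^k}=\id$ as soon as $p^k\geq d$, whence $u^{Np^k}=\id$, contradicting the assumption that $u$ has infinite order. The main (mild) obstacle is the bookkeeping in the second step, ensuring that the argument on the $2$-adic valuations respects multiplicities correctly; the key observation is simply that the maximum of $M$ cannot be strictly positive under a multiset-bijection of the form $k\mapsto\max(k-1,0)$.
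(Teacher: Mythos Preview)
Your proof is correct and follows the same overall strategy as the paper: pass to the eigenvalue multiset, use that squaring permutes it, and conclude. The treatment of the infinite-order clause is identical in spirit.

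The only divergence is in establishing oddness of the quasi-order. You first show each $\lambda$ is a root of unity from $\lambda^{2^a}=\lambda^{2^b}$, and then run a separate $2$-adic valuation argument on the multiset $M$. The paper short-circuits this: since $\sigma$ is a bijection of the \emph{finite} set $\Spec u$, it is a permutation of finite order, so there is an $n$ with $\sigma^n=\id$, i.e.\ $\lambda^{2^n}=\lambda$ and hence $\lambda^{2^n-1}=1$ for every eigenvalue. As $2^n-1$ is odd, each eigenvalue has odd order and the quasi-order is odd---no second step needed. In your write-up you already noted that $\sigma$ is a bijection, so you could have cancelled $\sigma^a$ from both sides of $\sigma^a(\lambda)=\sigma^b(\lambda)$ to reach $\lambda=\lambda^{2^{b-a}}$ directly; your valuation argument works but is more machinery than the situation requires.
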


\begin{proof} Let $\Spec\,u$ be the spectrum of $u$.
By hypothesis the square map 
$\Spec\,u\to \Spec\,u, \lambda\mapsto\lambda^2$ is bijective,
hence for any $\lambda\in\Spec\,u$, we have 
$\lambda^{2^n}=\lambda$ for some integer $n$. It follows that all 
eigenvalues are odd roots of unity, what proves that $u$ is
quasi-unipotent of odd quasi-order.

Over any field of finite characteristic, the unipotent
endomorphisms have finite order. Hence we have $\ch\,K=0$ if
$u$ has infinite order.

\end{proof}

\noindent {\it 3.2 The Group $\Gamma=G_1*_A G_2$}

\noindent Set $G_1=\Z^2$, and let $\sigma, \sigma'$ be a basis $\Z^2$.
Set $\Z_{(2)}=\{x\in\Q\vert 2^n x\in\Z$ for $n>>0\}$.
Indeed $Z_{(2)}$ is the localization of the ring $\Z$ at $2$, but,
in what follows, we will only consider its group structure. The element $1$ in $Z_{(2)}$ will be denoted by $\tau$
and the addition in $Z_{(2)}$ will be  denoted multiplicatively.

Let $\sigma$ act on $\Z_{(2)}$ by multiplication by $2$, so we can consider the semi-direct product $G_2=\Z.\sigma\ltimes \Z_{(2)}$.
Let $\Gamma=G_1*_A G_2$, where $A=\Z\sigma$. It is easy to show
that $\Gamma$ is generated by $\sigma,\sigma'$ and $\tau$, and 
it is defined by the following two relations

\centerline{$\sigma\sigma'=\sigma'\sigma$, and
$\sigma \tau \sigma^{-1}=\tau^2$.}

\begin{lemma}\label{prepa1} The group
$\Gamma$ has trivial commutants.
\end{lemma}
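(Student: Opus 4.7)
The plan is to invoke Lemma \ref{conj}, which asserts that any nontrivial amalgamated product satisfying hypothesis $(\mathcal H)$ has trivial commutants. Thus it suffices to check two things for $\Gamma = G_1 *_A G_2$: nontriviality of the amalgam, and the conjugation condition $(\mathcal H)$.

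Nontriviality is clear: $G_1 = \Z^2$ strictly contains $A = \Z\sigma$ (it contains $\sigma'$), and $G_2 = \Z.\sigma \ltimes \Z_{(2)}$ strictly contains $A$ (it contains $\tau$). In particular neither $G_1$ nor $G_2$ is isomorphic to $\Z/2\Z$, so the amalgam is not the dihedral $\Z/2\Z * \Z/2\Z$.

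For $(\mathcal H)$, the nontrivial elements of $A$ are exactly the $\sigma^n$ with $n\neq 0$, so I need, for each such $n$, a $\gamma\in\Gamma$ with $\gamma\sigma^n\gamma^{-1}\notin A$. I would simply take $\gamma=\tau\in G_2$ and compute inside $G_2$. The defining relation $\sigma\tau\sigma^{-1}=\tau^2$ gives, by iteration, $\sigma^n\tau^{-1}\sigma^{-n}=\tau^{-2^n}$ for all $n\in\Z$ (using that in $\Z_{(2)}$ the element $\tau^{1/2^n}$ makes sense when $n<0$, which is precisely why the ring $\Z_{(2)}$ was introduced). Therefore
\[
\tau\sigma^n\tau^{-1} \;=\; \tau\cdot\tau^{-2^n}\cdot\sigma^n \;=\; \tau^{1-2^n}\,\sigma^n.
\]
For $n\neq 0$ the exponent $1-2^n\in\Z_{(2)}$ is nonzero, so $\tau\sigma^n\tau^{-1}$ has a nontrivial $\Z_{(2)}$-component and hence does not lie in $A=\Z\sigma$. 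This establishes $(\mathcal H)$.

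I don't anticipate any real obstacle: once the semidirect product structure of $G_2$ is unwound, the verification of $(\mathcal H)$ is a one-line computation, and Lemma \ref{conj} then finishes the proof. The only thing worth being careful about is making sure the chosen conjugator $\tau$ actually lies in $\Gamma$ (which it does, since $G_2\subset \Gamma$) and that the computation is performed in $G_2$ rather than naively in the free product.
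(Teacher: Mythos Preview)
Your proposal is correct and follows essentially the same route as the paper: verify hypothesis $(\mathcal H)$ by conjugating $\sigma^n$ by $\tau$ (the paper uses $\tau^{-1}$ and restricts to $n\geq 1$ by symmetry, but the computation is the same), then invoke Lemma~\ref{conj}. Your explicit treatment of negative $n$ via the $\Z_{(2)}$-structure and your check of nontriviality are slightly more detailed than the paper's version, but the argument is identical in substance.
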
 

\begin{proof} First it is proved  that
the amalgamated product
$\Gamma=G_1*_A G_2$ satisfy hypothesis ${\cal H}$.

Indeed it is enought to show that,  that for any nontrivial
element $a\in A$, its conjugate $\tau^{-1} a \tau$ is not in $A$.
We have $a=\sigma^n$ for some $n\neq 0$, and it can be assumed that $n\geq 1$.

We have $\sigma^n \tau\sigma^{-n}=\tau^{2^n}$ and therefore
$\tau^{-1} a \tau=\tau^{2^n-1} a$ what proves that
$\tau^{-1} a \tau$ is not in $A$. 

Hence the amalgamated product
$\Gamma=G_1*_A G_2$ satisfy hypothesis ${\cal H}$. Therefore
by Lemma \ref{criterion}, $\Gamma$ has trivial commutants.
\end{proof}

\noindent{\it 3.3 Nonlinearity of $\Gamma$}

\begin{lemma} \label{nonlinear}

The group $\Gamma$ is not linear, even over a ring.

\end{lemma}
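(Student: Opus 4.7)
The plan is to invoke Lemma \ref{criterion} together with Lemma \ref{prepa1} to reduce to showing that $\Gamma$ is not linear over any field, and then to extract a contradiction from the combination of the Baumslag--Solitar relation $\sigma\tau\sigma^{-1}=\tau^2$, the commutation $\sigma\sigma'=\sigma'\sigma$, and the freeness of $\langle\sigma',\tau\rangle$. So I would assume for contradiction a faithful embedding $\rho\colon\Gamma\hookrightarrow GL(n,K)$ with $K$ algebraically closed. Since $\tau$ has infinite order in $\Gamma$ (it lies in the torsion-free subgroup $\Z_{(2)}$ of $G_2$) and $\rho(\sigma)$ conjugates $\rho(\tau)$ to $\rho(\tau)^2$, Lemma \ref{unipotent} forces $\ch K=0$ and makes $\rho(\tau)$ quasi-unipotent of some odd quasi-order $d$. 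Setting $u:=\rho(\tau^d)$ and $X:=\log u$, one obtains a nonzero nilpotent $X$ satisfying $\rho(\sigma)\,X\,\rho(\sigma)^{-1}=2X$; that is, $X$ lies in the $2$-eigenspace $V_2$ of $\Ad\rho(\sigma)$ on $\mathfrak{gl}_n$.

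The crucial observation is then that $\rho(\sigma')$, commuting with $\rho(\sigma)$, preserves every eigenspace of $\Ad\rho(\sigma)$, so the conjugates $X_m:=\rho((\sigma')^m)X\rho((\sigma')^{-m})$ all sit in $V_2$. Let $\mathfrak{n}:=\bigoplus_{k\geq 1}V_{2^k}$, where $V_{2^k}$ is the $2^k$-eigenspace of $\Ad\rho(\sigma)$. Only finitely many $V_{2^k}$ are nonzero (since $\rho(\sigma)$ has finitely many eigenvalues); the grading $[V_{2^i},V_{2^j}]\subset V_{2^{i+j}}$ holds; and each $V_{2^k}$ with $k\geq 1$ consists of nilpotent matrices, because in an eigenbasis of $\rho(\sigma)$ their nonzero entries occur only at positions $(i,j)$ for which the ratio $a_i/a_j$ is a power of $2$ strictly greater than $1$, a relation that defines a strict partial order forcing strict triangularity. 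Hence $\mathfrak{n}$ is a finite-dimensional nilpotent Lie algebra of nilpotent matrices, and $U:=\exp\mathfrak{n}$ is a unipotent closed subgroup of $GL(n,K)$ containing every $\rho(\tau_m^d)=\exp X_m$, where $\tau_m:=(\sigma')^m\tau(\sigma')^{-m}$.

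To contradict this, I would verify that the subgroup $\langle\sigma',\tau\rangle\subset\Gamma$ is free of rank $2$: since $\sigma'\in G_1\setminus A$ and $\tau\in G_2\setminus A$ and all their nonzero powers remain outside $A=\Z\sigma$, any alternating word in $\sigma'$ and $\tau$ is already a reduced word in the sense of Lemma \ref{words}, hence nontrivial. For $m\neq n$ this makes $\tau_m^d$ and $\tau_n^d$ two non-commuting elements of infinite order in a free group, so they generate a free subgroup of rank $2$ in $\Gamma$. Faithfulness of $\rho$ would then embed this copy of $F_2$ into the nilpotent group $U$, which is absurd, yielding the desired contradiction.

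The main obstacle in executing this plan is the verification that $\mathfrak{n}$ is a nilpotent Lie algebra of nilpotent matrices; this is handled by the partial-order observation sketched above, but is essentially the only nontrivial point. Everything else is a direct assembly of Lemmas \ref{words}, \ref{prepa1}, \ref{criterion}, \ref{unipotent} with the standard Jordan decomposition in characteristic zero.
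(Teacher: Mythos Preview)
Your argument is correct and shares its opening moves with the paper's proof: both reduce to linearity over a field via Lemma~\ref{prepa1} and Lemma~\ref{linearity} (or equivalently Lemma~\ref{criterion}), invoke Lemma~\ref{unipotent} to force $\ch K=0$ and quasi-unipotency of $\rho(\tau)$, and pass to $X=\log\rho(\tau^d)$ satisfying $\Ad\rho(\sigma)\,X=2X$. From there the two proofs diverge. The paper works on $V$ itself: it filters $V$ by the subspaces $V_{(\lambda)}^+=\bigoplus_{n\ge0}V_{(2^n\lambda)}$ built from the generalized $\rho(\sigma)$-eigenspaces, observes that $\tau$ acts trivially on each successive quotient, and concludes that $\rho(\Gamma)$ is solvable, contradicting the trivial-commutants property of Lemma~\ref{prepa1}. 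You instead work in $\mathfrak{gl}_n$: you collect the $\Ad\rho(\sigma)$-eigenspaces for eigenvalues $2^k$, $k\ge1$, into a graded nilpotent Lie algebra $\mathfrak n$ of strictly triangular matrices, note that all $\rho(\sigma')$-conjugates $X_m$ of $X$ lie in $\mathfrak n$ because $\sigma'$ commutes with $\sigma$, and derive a contradiction by embedding the free group $\langle\tau_0^d,\tau_1^d\rangle$ (free because $\langle\sigma',\tau\rangle$ is free by Lemma~\ref{words}) into the nilpotent group $\exp\mathfrak n$. The paper's route is slightly shorter and reuses Lemma~\ref{prepa1} for the punchline; yours is more explicit about where the obstruction lives and avoids re-invoking the amalgam structure at the end.

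One small point to tighten: when you speak of ``an eigenbasis of $\rho(\sigma)$'' and the ratios $a_i/a_j$, you are implicitly assuming $\rho(\sigma)$ is diagonalizable. You already gesture at the fix in your last sentence: take the Jordan decomposition $\rho(\sigma)=s\,u_\sigma$ and note that the generalized eigenspaces of $\Ad\rho(\sigma)$ coincide with the eigenspaces of $\Ad s$; your strict-partial-order argument then goes through verbatim in an eigenbasis of $s$.
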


\begin{proof} 
Assume that $\Gamma$ is linear over a ring.
By Lemma \ref{prepa1} the group $\Gamma$
has trivial commutants. Thus, by Lemma \ref{linearity},
$\Gamma$ is linear over a field.

Let $\rho:\Gamma\to GL(V)$ be an embbeding, where $V$ is a finite dimensional vector space over a field $K$.
Since $\sigma \tau \sigma^{-1}=\tau^2$ it follows from 
Lemma \ref{unipotent} that $\rho(\tau)$ is quasi-unipotent and its
quasi-order $n$ is odd.  

Let $\Gamma'$ be the subgroup of $\Gamma$ generated by
$\sigma,\,\sigma'$ and $\tau^n$. Since the morphism
$\psi: \Gamma\to\Gamma'$ defined by $\psi(\sigma)=\sigma$,
$\psi(\sigma')=\sigma'$ and $\psi(\tau)=\tau^n$ is an isomorphism,
it can be can assumed that $\rho(\tau)$ is unipotent.

Since $\tau$ has infinite order,
$K$ has characteristic zero by Lemma \ref{unipotent}. Set
$u=\rho(\tau)$, $h=\rho(\sigma)$ and 

\centerline{$e=\log\,u=\log\,1-(1-u)=\sum_{n\geq 1} \,(1-u)^n/n$,}

\noindent which is defined since $1-u$ is nilpotent. Since
$h u h^{-1}=u^2$, we have 

\centerline{$h e h^{-1}=2e$.}

It can be assumed that $K$ is algebraically closed.
Let $V=\oplus_{\lambda\in K}\, V_{(\lambda)}$ be the decomposition of $V$ into generalized eigenspace relative to $h$. Also, for 
$\lambda\in K$, set
$V_{(\lambda)}^+=\oplus_{n\geq 0}V_{(2^n\lambda)}$. 

Let $\lambda\in K$. Since
$\rho(G_1)$ commutes with $h$ we have 

\centerline{$\rho(G_1)V_{(\lambda)}\subset V_{(\lambda)}$.}

\noindent  Moreover we have
$e.V_{(\lambda)}\subset V_{(2 \lambda)}$ and therefore

\centerline{$\rho(G_2)V_{(\lambda)}^+\subset V_{(\lambda)}^+$.}

\noindent It follows that $V_{(\lambda)}^+$ is a $\Gamma$-module.
Since $\tau$ acts trivially on $V_{(\lambda)}^+/V_{(2\lambda)}^+$,
it follows that the image of $\Gamma$ in $GL(V_{(\lambda)}^+/V_{(2\lambda)}^+)$ is commutative. It follows that $V$ has a composition series by one-dimensional $\Gamma$-module, hence
$\rho(\Gamma)$ is solvable.

Since it is solvable, $\Gamma$ contains a nontrivial invariant abelian subgroup.
This fact contradicts  Lemma \ref{prepa1}, which states
that $\Gamma$ has trivial commutants. 
\end{proof}

\noindent{\it 3.4 Proof of Theorem A.2}

\begin{MainA2} Let $K$ be a field of characteristic zero. Then
$\Aut_0\,K^2$ contains FG subgroups which are not linear even over a ring.
\end{MainA2}

\begin{proof} Let define three automorphisms
$S, S'$ and $T$ of $K^2$ as follows. First 
$S$ and $S'$ are linear automorphisms  where
$S=1/2\,\id$ and $S'$ is defined by the matrix
$\begin{pmatrix}
1 & 1 \\
1 & 0
\end{pmatrix}$.  Next $T$ is the quadratic automorphism
$(x,y)\mapsto (x, y+x^2)$. Set $K_1=<S,S'>$,
 $K_2=<S,T>$, $C=<S>$.

The eigenvalues of $S'$ are $1\pm\sqrt{5}\over 2$. Hence
$n\neq 0$ $S'^n$ is not upper diagonal. It follows that
 $K_1\cap B_0=C$. Moreover it is clear that 
 $K_2\cap B_0=C$. It follows from Lemma \ref{subamal} that
 $K_1*_C K_2$ is a subgroup of $\GL(K^2)*_{B_0} \Elem_0(K^2)$. 
 
 Since $S T S^{-1}=T^2$ is is clear that the
 group morphism $\Gamma\to K_1*_C K_2,
 \sigma\mapsto S, \sigma'\mapsto S', \tau\mapsto T$ is an isomorphism. Therefore $\Gamma$ is a subgroup of $\Aut_0\,K^2$.
 
 Hence, by Lemma \ref{nonlinear}  $\Aut_0\,K^2$ contains
 a FG subgroup which is not linear, even over a ring.

\end{proof}

\noindent{\it 3.5 About  Counulier's subgroup.}

\noindent The group $G_{Cor}$ considered in \cite{C} is
the subgroup of all automorphisms of $\Q^2$ of the form

\centerline{$(x,y)\mapsto (x+u, y+f(x))$.}

\noindent This group, which is not FG, has been used in \cite{C} to prove

\begin{Cornu} The group $G_{Cor}$ is not linear over a field.
Consequently, the group $\Aut\,\Q^2$ is not linear over a field.

\end{Cornu}

\noindent
Indeed, $G_{Cor}$ is an example of a group which is linear over 
a ring, but not linear over a field.
Set $R=\Q[[x]]\oplus \Q((x))/\Q[[x]]$, where $\Q((x))/\Q[[x]]$ is a  square-zero ideal.

\begin{proposition} The group $G_{Cor}$ is linear over the ring $R$.
\end{proposition}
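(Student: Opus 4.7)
The plan is to construct an explicit embedding $\rho : G_{Cor} \hookrightarrow GL(2, R)$. Writing elements of $G_{Cor}$ as pairs $(u, f)$ with $u \in \Q$ and $f \in \Q[x]$ representing $(x, y) \mapsto (x+u, y+f(x))$, the group law reads $(u_1, f_1)(u_2, f_2) = (u_1+u_2,\, f_1(x+u_2) + f_2(x))$. Set $A := \Q[[x]]$ and $N := \Q((x))/\Q[[x]]$, so $R = A \oplus N$ with $N^2 = 0$; for each $u \in \Q$ the element $e^{ux} := \sum_{n\ge 0} u^n x^n / n!$ lies in $A$ and is a unit of $R$. The representation will take the form
\[
\rho(u, f) \;=\; \begin{pmatrix} e^{ux} & e^{ux}\,\phi(f) \\ 0 & 1 \end{pmatrix},
\]
where $\phi : \Q[x] \to N$ is the $\Q$-linear map defined by $\phi(x^n) = (-1)^n n!\,x^{-n-1}$. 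Equivalently, letting $\partial_x$ denote the term-wise derivation of $\Q((x))$ (which preserves $\Q[[x]]$ and hence descends to $N$), one has $\phi(f) = f(\partial_x)\cdot x^{-1}$.

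The crux is the identity
\[
\phi\bigl(f(x+v)\bigr) \;=\; e^{-vx}\,\phi(f) \quad\text{in } N, \qquad v \in \Q,\; f \in \Q[x]. \qquad (\star)
\]
Granted $(\star)$, a direct matrix multiplication gives that the upper-right entry of $\rho(u_1, f_1)\rho(u_2, f_2)$ is $e^{(u_1+u_2)x}\phi(f_2) + e^{u_1 x}\phi(f_1) = e^{(u_1+u_2)x}\bigl(\phi(f_1(x+u_2)) + \phi(f_2)\bigr)$, so $\rho$ is a homomorphism. Injectivity is immediate: $\rho(u, f) = I$ forces $e^{ux} = 1$ (hence $u = 0$) and then $\phi(f) = 0$ in $N$, which forces $f = 0$ since the elements $\phi(x^n) = (-1)^n n!\,x^{-n-1}$ have pairwise distinct pole orders.

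Thus the only substantive step is $(\star)$, which is where the main work lies. Conceptually, the Weyl-algebra conjugation $e^{-vx}\partial_x e^{vx} = \partial_x + v$ yields $f(\partial_x + v) = e^{-vx} f(\partial_x) e^{vx}$ as operators on $\Q((x))$; applying this to $x^{-1}$ and using both $e^{vx}\cdot x^{-1} \equiv x^{-1} \pmod{\Q[[x]]}$ and the fact that $f(\partial_x)$ preserves $\Q[[x]]$, the right-hand side reduces in $N$ to $e^{-vx}\phi(f)$. If one prefers a direct verification, $(\star)$ follows by $\Q$-linearity from the case $f = x^n$: both sides equal $\sum_{j=0}^{n}\binom{n}{j}v^{n-j}(-1)^j j!\,x^{-j-1}$ in $N$, matched by the substitution $m = n-j$ after reducing $e^{-vx}\cdot(-1)^n n!\,x^{-n-1}$ modulo $\Q[[x]]$ (the terms with $m \ge n+1$ drop out). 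Conclusion of the proof is then immediate.
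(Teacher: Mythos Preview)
Your proof is correct and follows the same overall strategy as the paper: embed $G_{Cor}$ into upper--triangular $2\times 2$ matrices over $R$, sending the translation part $u$ to a diagonal unit of $A=\Q[[x]]$ and the polynomial part $f$ to the strictly upper--triangular corner valued in the square--zero ideal $N$. The difference lies in the coordinates chosen. The paper works in the \emph{binomial} basis $\binom{t}{n}$ of $\Q[t]$ and uses the unit $(1+x)^{\alpha}\in A$; the key identity then reduces (for $\alpha=1$) to Pascal's rule $\binom{t+1}{n}-\binom{t}{n}=\binom{t}{n-1}$, and is extended to all $\alpha\in\Q$ by a polynomiality argument. You instead use the \emph{monomial} basis $x^{n}$ and the unit $e^{ux}\in A$, and your identity $(\star)$ is the Weyl--algebra conjugation $e^{-vx}\partial_{x}e^{vx}=\partial_{x}+v$ applied to $x^{-1}$. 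Each choice has its merits: the paper's version avoids factorials (so the same formulas would work over $\Z$), while yours gives a clean conceptual reason for the intertwining relation and handles all $v\in\Q$ at once without an interpolation step. Structurally the two embeddings are conjugate variants of the same construction.
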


\begin{proof} Let $T\simeq\Q$ be the group whose elements are denoted
$\tau^{\alpha}$ for $\alpha\in\Q$ and the product is given
$\tau^{\alpha}.\tau^{\beta}= \tau^{\alpha+\beta}$. As a group,
$G_{Cor}$ is isomorphic to $T\ltimes\Q[t]$, where the action of $T$ on
$\Q[t]$ is defined by

\centerline{$\tau^{\alpha} f(t)\tau^{-\alpha}=f(t+\alpha)$}

\noindent for any $\alpha\in\Q$ and $f\in \Q[t]$.

Recall that any $f(t)$ can be written as a finite sum
$f(t)=\sum_{n\geq 0}\,a_n (^t_n)$.
There are  embeddings 
$\rho_1:\Q[t]\to GL(2,R)$ and
$\rho_2:T\to GL(2,R)$
defined by

\centerline{
$\rho_1(\sum_{n\geq 0} \, a_n \, (^t_n))= \begin{pmatrix}
1 & \sum_n \,a_n [x^{-n-1}] \\
0& 1
\end{pmatrix}$ and  
$\rho_2(\tau^{\alpha})= \begin{pmatrix}
1 & 0 \\
0& (1+x)^\alpha
\end{pmatrix}$,}

\noindent where, for $n<0$ the symbol $[x^{-n}]$ is the element
$x^{-n}\mod \Q[[x]]$ in the ideal $\Q((x))/\Q[[x]]$ of $R$, and the 
fractional power $(1+x)^\alpha$ is the formal series $\sum_{k\geq 0}\,
(^\alpha_k)\,x^k$.

We claim that

\centerline{$\rho_2(\tau^{\alpha}) 
\rho_1(f(t))\rho_2(\tau^{-\alpha})=\rho_1(f(t+\alpha))$.}

\noindent Indeed for $\alpha=1$, this formula follows from the fact that $(^{t+1}_n)-(^t_n)=(^{\,\,t}_{n-1})$. It follows that 
the formula holds for any integer $\alpha\geq 1$. Since both sides of the identity are   polynomials in $\alpha$, the formula holds for any $\alpha\in\Q$. 

It follows that $\rho_1$ and $\rho_2$ can be combined into
an embbeding $\rho: G_{Cor}\to GL(2,R)$.
\end{proof}

\section{Semi-algebraic characters.}

Let $K$ be an infinite field of characteristic $p$. 

\begin{lemma}\label{span} Let $n\geq 1$ be an integer prime to $p$. Then for any $x\in K$, there is an integer $m$ and a collection $x_1,\,x_2\dots x_m$ of elements of $K^*$ such that

\centerline{$x=\sum_{1\leq k\leq m}\,x_i^n$.}

\end{lemma}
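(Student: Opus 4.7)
My plan is to prove the equivalent statement that the additive subgroup $L\subset K$ generated by $\{y^n:y\in K^*\}$ equals $K$; this is equivalent to the lemma because in characteristic $p$ one has $-a=(p-1)a$, so positive sums of elements of $K^n$ already exhaust the $\Z$-span. The argument uses a Lagrange interpolation identity applied to the family $Q_\beta(y):=(y+\beta)^n-y^n$, combined with the observation that $L$ is a subring of $K$.

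The preliminary observation is that $L$ is a \emph{subring} of $K$: the identity $a^n b^n=(ab)^n$ shows that $K^n$ is closed under multiplication, so by distributivity $L$ is closed under multiplication as well, and $1=1^n\in L$. Moreover $L$ is infinite, since $K$ is infinite and the fibres of $y\mapsto y^n$ have size at most $n$, so $K^n\subset L$ is already infinite. In particular, I can choose $n$ pairwise distinct elements $y_1,\dots,y_n$ inside $L$.

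The core step is the interpolation computation. For arbitrary $\beta\in K$, the polynomial
\[
Q_\beta(y)=(y+\beta)^n-y^n=n\beta\,y^{n-1}+\binom{n}{2}\beta^2y^{n-2}+\dots+\beta^n
\]
has degree at most $n-1$ and leading coefficient $n\beta$, and each value $Q_\beta(y_i)=(y_i+\beta)^n-y_i^n$ lies in $L$, being the difference of two $n$-th powers (or zeros). Setting $D_i:=\prod_{j\neq i}(y_i-y_j)$ and reading off the coefficient of $y^{n-1}$ from the Lagrange expansion of $Q_\beta$ at the nodes $y_1,\dots,y_n$ gives the identity
\[
n\beta=\sum_{i=1}^n\frac{Q_\beta(y_i)}{D_i}.
\]
Multiplying through by $D:=\prod_i D_i$ yields $n\beta\,D=\sum_i Q_\beta(y_i)\prod_{k\neq i}D_k$, and since the $y_j$ belong to the ring $L$, each $D_k$ and each product $\prod_{k\neq i}D_k$ also lies in $L$; combined with $Q_\beta(y_i)\in L$ this shows $nD\beta\in L$ for every $\beta\in K$. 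Finally $n\neq 0$ in $K$ (by $\gcd(n,p)=1$) and $D\neq 0$ (the $y_i$ being distinct), so $nD$ is a unit of $K$ and $L\supset nD\cdot K=K$.

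The only delicate point is that the interpolation nodes must be chosen \emph{inside} $L$ in order for the Vandermonde-type quantities $D_i$ to remain in $L$; this is precisely why it is crucial to establish beforehand that $L$ is an infinite subring and not merely an additive subgroup. After that the argument is purely formal, and the hypothesis $\gcd(n,p)=1$ is used only at the very last step, to ensure that multiplication by $nD$ is a bijection of $K$.
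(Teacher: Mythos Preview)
Your proof is correct and follows the same core strategy as the paper: establish that the additive span $L$ of the $n$-th powers is a subring, pick sufficiently many interpolation nodes inside it, and apply Lagrange interpolation to a translate of $y^n$ to extract the coefficient $n\beta$ of the subleading term.

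The paper takes one extra step you avoid: it first shows $L$ is actually a sub\emph{field} (via $x^{-1}=(1/x)^n x^{n-1}$), and then uses this both to argue that $L$ is infinite (otherwise $L=\F_q$ and every element of $K$ would be algebraic of degree $\le n$ over $\F_q$, forcing $K$ finite) and to divide by $n$ at the end. Your direct fibre-count for infiniteness and your clearing of denominators by the Vandermonde product $D$ let you work with the subring structure alone, so your version is a bit more economical. The paper's intermediate conclusion that $L$ is a subfield is a pleasant fact in its own right, but for the lemma as stated your route is cleaner.
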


\begin{proof} Let $K'$ be the additive span of
the set $\{y^n\vert y\in K\}$. Since 
$-y^n=(p-1) y^n$, $K'$ is an additive subgroup of $K$. Clearly,
$K'$ is a subring of $K$. Let $x\in K'$ with $x\neq 0$. It follows from the formula $x^{-1}=(1/x)^n x^{n-1}$ that $K'$ is a subfield.

We claim that $K'$ is infinite. Assume otherwise. Then
$K'=\F_q$ for some power $q$ of $p$. Since any element of $K$ is
algebraic over $K'$ of degree $\leq n$, it follows that
$K\subset \cup_{k\leq n} \F_{q^k}$, a fact that contradicts that $K$ is infinite.

It remains to prove that $K'=K$. 
For $x\in K$, let $P(t)\in K[t]$ be the polynomial $P(t)=(x+t)^n$. Let $y_0,\dots y_n$ be $n+1$
distinct elements in $K'$. We have $P(y_i)\in K'$ for any
$0\leq i\leq n$. Using Lagrange's interpolation polynomials, there exist  a polynomial
$Q(t)\in K'[t]$ of degree $\leq n$ such that $P(y_i)=Q(y_i)$ for any $0\leq i\leq n$. Since $P-Q$ has at least $n+1$ roots, we have
$P=Q$ and therefore $P(t)\in K'[t]$. Since

\centerline{$P(t)=t^n + nx t^{n-1}+\dots$}

\noindent it follows that $nx$ belongs to $K'$, and therefore $x\in K'$.
Hence $K'=K$. 

\end{proof}

\begin{lemma}\label{n=m} Let $K$ be an infinite field of characteristic $p$,
let $\mu:K\to K$ be a field automorphism and
let  $n,\,m$ be positive integers which are prime to $p$. Assume that

\centerline {$x^n=\mu(x)^m$, for all $x\in K^*$.}

Then we have $m=n$.

\end{lemma}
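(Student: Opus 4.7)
The plan is to turn the hypothesis into a polynomial identity in one variable by combining the multiplicative relation $\mu(x)^m=x^n$ with the additivity of $\mu$ and parametrizing a suitable curve. For every $c, x \in K$, additivity gives
\[(c+x)^n = \mu(c+x)^m = (\mu(c)+\mu(x))^m,\]
so, fixing $x \in K^*$, the polynomial $F(X,Y) = (Y+\mu(x))^m - (X+x)^n \in K[X,Y]$ vanishes on every point of the form $(c, \mu(c))$ with $c \in K$, and each such point lies on the affine curve $\{Y^m = X^n\} \subset \overline{K}^2$.

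To handle the potential reducibility of this curve, I would write $d = \gcd(n,m)$, $n = dn'$, $m = dm'$ with $\gcd(n',m') = 1$, and factor $Y^m - X^n = \prod_{\omega^d = 1}(Y^{m'} - \omega X^{n'})$ over $\overline{K}$. The map $\chi(c) = \mu(c)^{m'} c^{-n'}$ is a multiplicative character $K^* \to \overline{K}^*$ taking values in the finite group of $d$-th roots of unity, so since $K^*$ is infinite its kernel $A = \ker \chi$ is also infinite; every $(c, \mu(c))$ for $c \in A$ then lies on the single component $V := V(Y^{m'} - X^{n'})$. An Eisenstein-type argument at the prime $(X)$ of $K[X]$, using $\gcd(n',m')=1$, shows $Y^{m'}-X^{n'}$ is irreducible, so $V$ is an irreducible affine curve rationally parametrized by $t \mapsto (t^{m'}, t^{n'})$. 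The infinite subset $\{(c, \mu(c)) : c \in A\}$ of $V$ is therefore Zariski dense, and the vanishing of $F$ on it forces $F$ to vanish on all of $V$; substituting the parametrization yields
\[(t^{m'} + x)^n = (t^{n'} + \mu(x))^m \qquad \text{in } \overline{K}[t].\]

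The final step is to compare coefficients of powers of $t$. On the left the $t$-exponents lie in $\{m'j : 0 \le j \le n\}$, and on the right in $\{n'j : 0 \le j \le m\}$; since $\gcd(n',m') = 1$, the common exponents are exactly the multiples of $n'm'$. If $n' \ge 2$, the exponent $m'$ occurs on the left with coefficient $nx^{n-1}$ but does not occur on the right, so the identity forces $nx^{n-1} = 0$; with $x \in K^*$ this contradicts $\gcd(n,p)=1$, hence $n'=1$. The symmetric comparison at exponent $n'$ gives $m'=1$, so $n = m = d$.

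The main technical obstacle, and the reason for introducing $\chi$, is precisely the reducibility of $\{Y^m = X^n\}$ when $d > 1$: to apply Zariski density one must first isolate a single irreducible component on which the graph of $\mu$ already has infinitely many points.
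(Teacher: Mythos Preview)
Your argument is correct and takes a genuinely different route from the paper.  The paper proceeds by case analysis: Step~1 treats $K=\F_p(t)$ by using that any automorphism of $\F_p(t)$ is a M\"obius transformation; Step~2 treats $K\subset\overline{\F}_p$ via a $p$-adic digit argument inside a large finite subfield; Step~3 reduces the general case to Step~1 using Lemma~\ref{span} to show $\mu$ preserves a copy of $\F_p(t)$ inside $K$.  Your proof, by contrast, is a single uniform argument that needs no case distinction and does not invoke Lemma~\ref{span}: you combine the multiplicative constraint $\mu(c)^m=c^n$ with additivity of $\mu$ to force a polynomial identity on the curve $Y^m=X^n$, isolate one irreducible branch $Y^{m'}=X^{n'}$ via the finite-image character $\chi$, and then pull back along the parametrisation $t\mapsto(t^{m'},t^{n'})$ to obtain a one-variable identity whose coefficients are easy to compare.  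This buys you a cleaner, characteristic-free proof (the role of $p\nmid n,m$ appears only at the very end, to ensure $nx^{n-1}\ne 0$ and $m\mu(x)^{m-1}\ne 0$), at the cost of a small amount of algebraic geometry.  Two minor remarks: the ``Eisenstein-type'' justification for the irreducibility of $Y^{m'}-X^{n'}$ is really a Newton-polygon argument (ordinary Eisenstein at $(X)$ only works when $n'=1$); and in fact you can bypass irreducibility altogether, since for each $c\in A$ B\'ezout's identity directly produces $t_c\in\overline{K}$ with $t_c^{m'}=c$, $t_c^{n'}=\mu(c)$, giving infinitely many roots of the one-variable polynomial without ever mentioning Zariski density.
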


\noindent
{\it Remark:} More precisely,  the hypotheses of the lemma
imply also that $\mu=\id$, but this is not required in what follows. 

\begin{proof} 

{\it Step 1: proof for $K=\F_p(t)$.}
There are $a,\,b,\,c$ and $d\in K$ 
with $ad-bc\neq 0$ such that
$\mu(t)=\frac{at+b}{ ct+d}$.  The identity

\centerline{$t^n=(\frac{at+b}{ct+d})^m$}

\noindent clearly implies that $n=m$.

\smallskip
\noindent {\it Step 2: proof for $K\subset \overline\F_p$.}
Since $K$ is infinite, $K$ contains arbitrarily big finite fields. So we have
$K\supset \F_{p^N}$ for some positive integer $N$  with $nm<p^N$. Since $K$ contains a unique
field of cardinality $p^N$ we have $\mu(\F_{p^N})=\F_{p^N}$.
There is an non negative integer $a<N$ such that
$\mu(x)= x^{p^a}$ for any $x\in \F_{p^N}$. Therefore we have

\centerline{$x^n=x^{m{p^a}}$ for all $x\in F_{p^N}^*$.}

\noindent It follows that $n\equiv m{p^a}\, \mod p^N-1$. 
Let $n=\sum_{k\geq 0}\, n_k p^k$ and
$m=\sum_{k\geq 0}\, m_k p^k$ be the $p$-adic expansions of $n$ and $m$. By definition each  digit $n_k$, $m_k$ is an integer between $0$ and $p-1$. Since $n.m<p^N$, we have 
$m_k=m_k=0$ for $k\geq N$.

For each integer $k$ let $[k]$ be its residue modulo $N$, so
we have $0\leq [k]<N$ by definition. We have

\centerline{$n\equiv mp^a\equiv \sum_{0\leq k<N} m_k p^{[a+k]}\, \mod p^N-1$.}

\noindent 
Since both integers $n$ and $\sum_{0\leq k<N} m_k p^{[a+k]}$
blongs to $[1,p^N-1]$ and are congruent modulo $p^N-1$, it
follows that

\centerline{$n= \sum_{0\leq k<N} m_k p^{[a+k]}$.}

 Assume $a>0$. 
We have $n_a=m_0$ and $n_0=m_{N-a}$.
Since $n$ and $m$ are prime to $p$, the digits $n_0$ and
$m_0$ are not zero, therefore we have
$n_a\neq 0$ and $m_{N-a}\neq 0$. It follows that

\centerline{$n\geq p^a$ and $m\geq p^{N-a}$,}

\noindent which contradicts  that $nm<p^N$.

Therefore, we have $a=0$, and the equality $n=m$ is obvious.

\smallskip
\noindent{\it Step 3: proof for any infinite $K$.} 
In view of the previous step, it can be assumed that
$K$ contains a transcendental element $t$. Therefore,
$K$ contains the subfield $L:=\F_p(t)$.

By Lemma \ref{span}, $L$ and $\mu(L)$ are the additive span of the set 
$\{x^n\vert x\in K^*\}=\{\mu(x)^m\vert x\in K^*\}$. Therefore,
we have $\mu(L)=L$. Since $\mu$ induces a field automorphism of $L$,
the equality $n=m$ follows from the first step.

\end{proof}

Let $L$ be another field. A group morphism 
$\chi:K^*\to L^*$ is called a {\it character} of $K^*$.
Let $X(K^*)$ be the set of all $L$-valued
characters of $K^*$. 
Let $n$ be an integer prime to $p$. A character
$\chi\in X(K^*)$ is called {\it semi-algebraic} of {\it degree}
$n$ if 

\centerline{$\chi(x)=\mu(x)^n$, for any $x\in K^*$,}

\noindent for some  field embedding $\mu: K\to L$.

Let ${\cal X}_n(K^*)$  be the set of all semi-algebraic characters of $K^*$ of degree $n$. Of course it can be assumed that $L$ has characteristic $p$, otherwise the set ${\cal X}_n(K^*)$ is empty. 

The next lemma shows that for an integer $n>0$ prime to $p$, the degree 
is uniquely determined by the character $\chi$. Indeed this statement is also true for the
negative integers $n$ and moreover the  field embbeding $\mu$ is also determined by $\chi$. For simplicity of the exposition, the lemma is stated and proved in its minimal form. 

However, it should be noted that the condition that $n$ is prime to $p$ is essential. 
Indeed, if $\mu:K\to L$ is a field embbeding into a perfect field $L$, then
$\mu(x)^n=\mu'(x)^{pn}$, where $\mu'$ is the field embedding
defined by $\mu'(x)=\mu(x)^{1/p}$.

\begin{lemma}\label{degree} Let $n\neq m$ be positive integers which are prime to $p$. Then ${\cal X}_n(K^*)\cap {\cal X}_m(K^*)=\emptyset$.
\end{lemma}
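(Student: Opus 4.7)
The plan is to suppose, for contradiction, that some character $\chi$ lies in $\mathcal{X}_n(K^*) \cap \mathcal{X}_m(K^*)$, and then invoke Lemma \ref{n=m} to force $n = m$. Concretely, I would fix field embeddings $\mu_1, \mu_2 : K \to L$ with
$$\mu_1(x)^n = \chi(x) = \mu_2(x)^m \quad\text{for every } x \in K^*,$$
set $K_i := \mu_i(K) \subset L$, and form the field isomorphism $\sigma := \mu_2 \circ \mu_1^{-1} : K_1 \to K_2$. The substitution $y = \mu_1(x)$ rewrites the identity above as
$$ y^n = \sigma(y)^m \quad \text{for all } y \in K_1^*.$$

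The step I expect to be the main obstacle is identifying the two subfields $K_1 = K_2$, because only then does $\sigma$ become a genuine \emph{automorphism} of an infinite field of characteristic $p$, which is the setting required by Lemma \ref{n=m}. This is precisely what Lemma \ref{span} is designed for: both $K_1$ and $K_2$ are isomorphic to $K$, hence are infinite fields of characteristic $p$, and Lemma \ref{span} says that each coincides with the additive span of its nonzero $n$-th powers (respectively, $m$-th powers). Since $\sigma$ bijects $K_1^*$ onto $K_2^*$, the relation $y^n = \sigma(y)^m$ gives
$$\{y^n : y \in K_1^*\} \;=\; \{\sigma(y)^m : y \in K_1^*\} \;=\; \{z^m : z \in K_2^*\},$$
and taking additive spans yields $K_1 = K_2$.

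Once this identification is in hand, $\sigma$ is an automorphism of the infinite field $K_1$ of characteristic $p$ (characteristic $p$ because $L \supset \mu_1(K)$), and the identity $y^n = \sigma(y)^m$ on $K_1^*$ matches the hypothesis of Lemma \ref{n=m} verbatim (with $K$ there replaced by $K_1$). That lemma then forces $n = m$, contradicting the assumption $n \neq m$ and completing the proof.
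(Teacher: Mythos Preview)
Your proof is correct and follows essentially the same route as the paper: both arguments use Lemma~\ref{span} to identify the images of the two field embeddings (as the additive span of $\mathrm{Im}\,\chi$), thereby turning the comparison map into a genuine field automorphism, and then invoke Lemma~\ref{n=m} to conclude $n=m$. The only difference is cosmetic --- the paper composes in the other order to obtain an automorphism of $K$ itself rather than of $K_1\subset L$, but the substance is identical.
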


\begin{proof} Let $n$, $m$ be positive integers prime to $p$
and let $\chi\in {\cal X}_n(K^*)\cap {\cal X}_m(K^*)$.
By definition, there are fields embeddings 
$\nu, \nu':K\to L$ such that

\centerline{$\chi(x)=\nu(x)^n=\nu'(x)^m$ for any $x\in K^*$.}

\noindent By Lemma \ref{span}, $\nu(K)$ and $\nu'(K)$ are the linear span of $\Image\,\chi$. Therefore
we have $\nu(K)=\nu'(K)$ and 
$\mu:=\nu'^{-1}\circ\nu$ is a well defined field automorphism of 
$K$. We have

\centerline{
$\mu(x)^n=\nu'^{-1}\circ\nu(x^n)= \nu'^{-1}\circ\nu'(x^m)=x^m$.}

Therefore by Lemma \ref{n=m}, we have $n=m$.

\end{proof}

\section{Nonlinearity of  $\Aut_0\,K^2$ for $K$ infinite of characteristic $p$.}

This section provides the proof of Theorem A.1. Since Theorem A.2 has been proved in 
Section 3, it can be assumed that $K$ is a field of characteristic $p$.
Although our setting is different from  \cite{BT}, this section 
follows the same idea, namely that some abstract morphisms of algebraic groups are, somehow, semi-algebraic.

Recall that an {\it elementary abelian $p$-group} is simply a
$\F_p$-vector space $E$ viewed as a group. Its $\F_p$-dimension
is called the {\it rank} of $E$.

\begin{lemma}\label{inf-rank} Let $L$ be a field and let $E$ be
an elementary abelian $p$-group of infinite rank.
If $\ch\,L\neq p$, then $E$ is not linear over $L$.
\end{lemma}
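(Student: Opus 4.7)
The plan is to derive a contradiction from the hypothesis that $E$ embeds into $GL(n,L)$ for some $n$, by showing that every representation of $E$ over $L$ must have finite image when $\ch\,L\neq p$.

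Suppose we have an embedding $\rho:E\hookrightarrow GL(n,L)$. The key observation is that every element $g\in \rho(E)$ satisfies $g^p=1$, so $g$ is a root of the polynomial $X^p-1\in L[X]$. Since $\ch\,L\neq p$, this polynomial is separable (its derivative is $pX^{p-1}$, which is nonzero and coprime to $X^p-1$), so over an algebraic closure $\overline L$ it splits into distinct linear factors. Consequently each $\rho(g)$ is diagonalizable over $\overline L$.

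Next I would use that $E$ is abelian: the elements of $\rho(E)$ pairwise commute, and each is diagonalizable over $\overline L$, so by the standard fact on simultaneous diagonalization of commuting diagonalizable operators, the whole image $\rho(E)$ can be conjugated into the diagonal subgroup $T\subset GL(n,\overline L)$. Each diagonal entry of an element of $\rho(E)$ is a $p$-th root of unity in $\overline L$, and since $\ch\,L\neq p$, the group $\mu_p(\overline L)$ of $p$-th roots of unity is cyclic of order exactly $p$. Therefore $\rho(E)$ sits inside $\mu_p(\overline L)^n\simeq (\Z/p\Z)^n$, which has order $p^n<\infty$. This contradicts the injectivity of $\rho$, since $E$ has infinite rank and hence infinite cardinality.

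The argument is essentially routine; there is no real obstacle, the two ingredients being (i) semisimplicity of elements of order $p$ in characteristic different from $p$, and (ii) simultaneous diagonalization of commuting diagonalizable operators. The only small point to be careful about is invoking the algebraic closure $\overline L$ rather than $L$ itself, since the $p$-th roots of unity need not lie in $L$; but since linearity over $L$ implies linearity over $\overline L$, this causes no difficulty.
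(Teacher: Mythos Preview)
Your proof is correct and follows essentially the same approach as the paper's: both pass to the algebraic closure, simultaneously diagonalize the commuting order-$p$ elements (the paper phrases this as decomposing $V$ into one-dimensional characters $L_{\chi_i}$), and then bound the rank of the image by $n$ (the paper via $\cap\Ker\chi_i$ having codimension $\leq n$, you via the image lying in $\mu_p^n$). The two arguments differ only in wording.
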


\begin{proof} It can be assumed that $L$ is algebraically closed.
Let $V$ be a  vector space over $L$ of dimension $n$ and let
$F\subset GL(V)$ be an elementary abelian $p$-subgroup. For any character $\chi: K\to L^*$, let $L_\chi$ be the 
corresponding one-dimensional representation of $F$. 

Since $\ch\L\neq p$, $F$ is a diagonalizable subgroup of $GL(V)$.
So there is an isomorphism of $F$-modules 
 
 \centerline{$V\simeq \oplus_{1\leq i\leq n}\,L\,_{\chi_i}$.}
 
\noindent Since $\cap \,\Ker\,\chi_i$ has $\F_p$-codimension
 $\leq n$, it follows that the rank of $F$ is $\leq n$.

 Therefore no infinite rank elementary abelian $p$-group
 is  linear over $L$.

\end{proof}

From now on, $L$ is an algebraically closed field of characteric 
$p$.

\begin{lemma}\label{reco} Let $\chi\in X(K^*)$, let $\mu:K\to L$
be a nonzero additive map and let $n$ be a positive
integer prime to $p$.

Assume that

\centerline{$\mu (x^ny)=\chi(x)\mu(y)$}

\noindent for any $x\in K^*$ and $y\in K$. 

Then
$\chi$ is a semi-algebraic character of degree $n$.

\end{lemma}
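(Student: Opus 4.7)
The plan is to show that, after a suitable normalization, $\mu$ is itself a field embedding $K\to L$, and then the hypothesis will give directly $\chi(x)=\mu(x)^n$.

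First I would specialize the relation at $y=1$. This gives $\mu(x^n)=\chi(x)\mu(1)$ for every $x\in K^*$, so the information about $\chi$ is entirely contained in the values of $\mu$ on $n$-th powers. The first thing to check is that $\mu(1)\neq 0$: indeed, if $\mu(1)=0$, the relation forces $\mu(x^n)=0$ for all $x\in K^*$, and since $n$ is prime to $p$, Lemma \ref{span} says every element of $K$ is a finite sum of $n$-th powers, so additivity of $\mu$ would give $\mu\equiv 0$, contradicting the hypothesis. Rescaling $\mu$ by $\mu(1)^{-1}$, I may assume $\mu(1)=1$, and then $\chi(x)=\mu(x^n)$ for all $x\in K^*$.

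Next I would upgrade $\mu$ from an additive map to a ring homomorphism. The original relation rewrites as $\mu(x^n y)=\mu(x^n)\mu(y)$, so $\mu$ behaves multiplicatively whenever the first factor is an $n$-th power. To bootstrap this to arbitrary first factors, take any $a\in K$ and, using Lemma \ref{span} again, write $a=\sum_{i=1}^m x_i^n$ with $x_i\in K^*$. For any $y\in K$, additivity yields
\[
\mu(ay)=\sum_i \mu(x_i^n y)=\sum_i \mu(x_i^n)\mu(y)=\Bigl(\sum_i \mu(x_i^n)\Bigr)\mu(y)=\mu(a)\mu(y).
\]
Combined with additivity and $\mu(1)=1$, this makes $\mu$ a (nonzero) ring homomorphism $K\to L$, hence a field embedding since $K$ is a field.

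Finally, substituting this back into $\chi(x)=\mu(x^n)=\mu(x)^n$ shows $\chi$ is semi-algebraic of degree $n$ in the sense of the definition preceding Lemma \ref{degree}. The only genuine subtlety is the passage from multiplicativity on $n$-th powers to full multiplicativity, which is exactly where the hypothesis that $n$ is prime to $p$ enters (via Lemma \ref{span}); the rest is a straightforward manipulation of the defining identity.
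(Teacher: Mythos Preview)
Your proof is correct and follows essentially the same approach as the paper: normalize so that $\mu(1)=1$ using Lemma~\ref{span}, deduce $\chi(x)=\mu(x^n)$, then use Lemma~\ref{span} again to promote the partial multiplicativity of $\mu$ to full multiplicativity and conclude that $\mu$ is a field embedding. Your extension step is in fact slightly more streamlined than the paper's, since you exploit that the relation $\mu(x^n y)=\mu(x^n)\mu(y)$ already holds for arbitrary $y$, so only the first argument needs to be handled via Lemma~\ref{span}; the paper first establishes $\mu(x^n y^n)=\mu(x^n)\mu(y^n)$ and then extends in both variables.
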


\begin{proof} By Lemma \ref{span}, $K$ is the additive span
of $(K^*)^n$. Therefore there is some $x\in K^*$ such that
$\mu(x^n)\neq 0$. Since $\mu(x^n)=\chi(x)\mu(1)$, it follows that
$\mu(1)\neq 0$. After rescaling $\mu$, one can assume that
$\mu(1)=1$.

Let $x,\,y \in K^*$. We have $\mu(x^n)=\chi(x)\mu(1)=\chi(x)$,
and therefore $\mu(x^n y^n)=\chi(x)\mu(y^n)=\mu(x^n)\mu(y^n)$.
By lemma \ref{span},  $K$ is the additive span
of $(K^*)^n$. It follows that

\centerline{$\mu(ab)=\mu(a)\mu(b)$}

\noindent for any $a,\,b\in K$. Hence $\mu$ is a field embedding.
Moreover, we have $\chi(x)=\mu(x^n)$ for any $x\in K^*$. Therefore
$\chi$ is a  semi-algebraic character of degree $n$.
\end{proof}

For $n\geq 1$, let $G_n(K)$ be the semi-direct product 
$K^*\ltimes K$, where any $z\in K^*$ acts on $K$ as $z^n$.
More explicitely, the elements of $G_n(K)$ are denoted 
$(z,a)$, with $z\in K^*$ and $a\in K$ and the product
is defined by
   
 \centerline{  $(z,a).(z',a')=(zz',z'^na+a')$,}
 
 \noindent for any $z,\,z'\in K^*$ and $a,\,a'\in K$.

Let $V$ be a finite dimensional $L$-vector space and
let $\rho:G_n(K)\to GL(V)$ be a group morphism. With respect to 
the subgroup  $(K^*,0)$ of $G_n(K)$, there is a 
decomposition of $V$ as 

\centerline{$V=\oplus_{\chi\in X(K^*)}\, V_{(\chi)}$}

\noindent where 
$V_{(\chi)}:=\{v\in V\vert \forall x\in K^* :(\rho(x,0)-\chi(x))^n v=0$
for $n>>0\}$ is the generalized eigenspace associated with the character $\chi$.
Here and in the sequel, $\rho(z,a)$ stands for $\rho((z,a))$, 
for any $(z,a)\in G_n(K)$.

\begin{lemma}\label{char} Let $n$ be a positive integer prime to $p$.
$\rho:G_n(K)\to GL(V)$ be an injective morphism. Then we have

\centerline{$\End (V)_{(\chi)}\neq 0$}

\noindent for some $\chi\in {\cal X}_n(K^*)$.

\end{lemma}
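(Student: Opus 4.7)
The plan is to use an augmentation-ideal trick on the unipotent subgroup $\rho((1,K))$ to produce a nonzero additive $L$-valued function on $K$ that transforms under $K^*$ exactly in the way required by Lemma \ref{reco}.

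\textbf{Step 1 (Unipotent subalgebra).} Set $u(a):=\rho(1,a)$ and $T(a):=u(a)-\id\in \End(V)$. Since $\ch\,K=p$ we have $u(a)^p=u(pa)=\id$, so each $u(a)$ is unipotent and each $T(a)$ is nilpotent. The elements of $u(K)$ commute, so simultaneous triangularization over the algebraically closed field $L$ shows the $L$-span $I:=\mathrm{span}_L\{T(a):a\in K\}\subset\End(V)$ generates a commutative nilpotent subalgebra, with $I\cdot I\subset I$ (note $T(a)T(b)=T(a+b)-T(a)-T(b)$) and $I^{\dim V}=0$. The identity $(z,0)(1,a)(z,0)^{-1}=(1,z^{-n}a)$ in $G_n(K)$ gives
\[
\rho(z,0)\,T(a)\,\rho(z,0)^{-1}=T(z^{-n}a),
\]
so conjugation by $K^*$ preserves $I$ and $I^2$.

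\textbf{Step 2 (Additive reduction).} From $u(a+b)=u(a)u(b)$ we get $T(a+b)=T(a)+T(b)+T(a)T(b)$, where the cross term lies in $I^2$. Hence the induced map $\bar T\colon K\to I/I^2$ is additive and $K^*$-equivariant, with $K^*$ acting on $K$ by $a\mapsto z^{-n}a$. If $\bar T\equiv 0$ then $I\subset I^2$, so by nilpotency $I=0$, contradicting injectivity of $\rho$ on $(1,K)$; thus $\bar T\not\equiv 0$.

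\textbf{Step 3 (Scalar projection).} Since $L$ is algebraically closed and $K^*$ acts by commuting operators on the finite-dimensional $L$-space $I/I^2$, we can choose a $K^*$-stable complete flag $0=F_0\subset F_1\subset\cdots\subset F_r=I/I^2$ whose one-dimensional quotients $F_j/F_{j-1}\simeq L$ carry $K^*$-characters $\chi_j:K^*\to L^*$. Let $j$ be minimal with $\bar T(K)\subset F_j$; the composition $\mu\colon K\xrightarrow{\bar T}F_j\twoheadrightarrow F_j/F_{j-1}\simeq L$ is then a nonzero additive map satisfying $\mu(z^{-n}a)=\chi_j(z)\mu(a)$, i.e.\ $\mu(z^na)=\chi_j(z)^{-1}\mu(a)$ for all $z\in K^*$, $a\in K$.

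\textbf{Step 4 (Invoke Lemma \ref{reco}).} Applied to $\mu$ and the character $\chi_j^{-1}$, Lemma \ref{reco} yields $\chi_j^{-1}\in \mathcal{X}_n(K^*)$. The weight $\chi_j$ appears in the $K^*$-subquotient $I/I^2\subset\End(V)$, hence in $\End(V)$; and since $\End(V)\simeq V\otimes V^*$, its set of $K^*$-weights is closed under inversion, so $\chi_j^{-1}$ is also a weight of $\End(V)$. Thus $\End(V)_{(\chi_j^{-1})}\neq 0$, giving the desired semi-algebraic character of degree $n$.

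\textbf{Main obstacle.} The key technical issue is that $T\colon K\to\End(V)$ fails to be $L$-linear because of the quadratic correction $T(a)T(b)$. The passage to $I/I^2$ is precisely the device that kills this correction and reduces the problem to the one-dimensional situation already treated by Lemma \ref{reco}; once this linearization is in place, everything else is standard weight-space bookkeeping.
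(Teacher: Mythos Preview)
Your argument is correct and follows the same overall strategy as the paper: linearize the map $a\mapsto\rho(1,a)-1$, project to a one-dimensional $K^*$-quotient, and invoke Lemma~\ref{reco}. The only organizational difference is in the linearization step: the paper restricts $\rho$ to the two-step filtration $V_0\subset V_1$ (fixed vectors and their preimage), on which the cross terms $\theta(a)\theta(b)$ vanish identically, whereas you kill the cross terms by passing to $I/I^2$. Both devices produce a nonzero additive $K^*$-equivariant map into a subquotient of $\End(V)$, so the two routes are essentially equivalent. Your setup forces the minor extra step of inverting the character (using that the $K^*$-weights of $\End(V)\simeq V\otimes V^*$ are closed under inversion), which is fine; the paper's version lands directly on a degree-$n$ character without that detour.
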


\begin{proof}
Set

\centerline{$V_0=\{v\in V\vert \rho(1,a)v=v,\, \forall a \in K\}$, and}

\centerline{
$V_1=\{v\in V\vert \rho(1,a)v=v \,\mod V_0 ,\,\forall a \in K\}$.}

\noindent Since  $\rho(1,a)^p=0$ for any $a\in K$, the $\rho(K)$-module
$V$ is unipotent.  It follows that

\centerline{$V_1\supsetneq V_0\neq 0$.}

Clearly, $V_1$ is a 
$G_n(K)$ submodule, and let $\rho_1:G_n(K)\to GL(V_1)$
be the restriction of $\rho$ to $V_1$.
Let $\theta: K\rightarrow \End(V_1)$ be the map defined by
 $\theta(a)=\rho_1(1,a)-1$ for $a\in K$.
 By definition, we have  $\theta(a)(V_1)\subset V_0$ and
 $\theta(a)(V_0)=\{0\}$. Hence we have 
  $\theta(a)\circ\theta(b)=0$ for any $a,\,b\in K$. Since

\centerline{$\rho_1(1,a+b)=1+\theta(a+b)$, and}

\centerline{$\rho_1(1,a)\circ \rho_1(1,b)=(1+\theta(a))(1+\theta(b))
=1+\theta(a)+\theta(b)$,}

\noindent it follows that $\theta(a+b)=\theta(a)+\theta(b)$.

Let $W$ be the $L$-vector space generated by $\Image\, \theta$. Since
$\theta\neq 0$, we have  $W\neq 0$. Let $\rho_W$ the action by conjugacy 
of $G_n(K)$ over $W$. We have $\rho_W(1,K)=0$, therefore there is a $G_n(K)$-equivariant map $g:W\to W'$, where
$W'$ is a one dimensional quotient of $W$.
Therefore there is some $\chi\in X(K^*)$ such that 

\centerline{$\rho_{W'}(z,0)=\chi(z)$,}

\noindent for any $z\in K^*$, where $\rho_{W'}$ is the induced action on $W'$.

Set $\mu=g\circ \theta$. It follows from the previous computation
that $\mu$ is additive. Moreover if follows from the identity
$\rho(x,0)\rho(1,a)\rho(x^{-1},0)=\rho(1,x^n a)$ that

\centerline{$\rho_1(x,0)\theta(a)\rho_1(x^{-1},0)=\theta(x^n a)$}

\noindent  and therefore

\centerline{$\mu(x^n a)=\chi(x) \mu(a)$,}

\noindent for any $x\in K^*$ and $a\in K$. 
By Lemma \ref{reco}, the character $\chi$ is algebraic of degree $n$. Since $W'$ is a subquotient of $\End(V)$, it follows that
$\End(V)_{(\chi)}\neq 0$.

\end{proof}

Let $n\geq 1$. From now on, we will identify
$G_n(K)$ with the subgroup of  $\Aut\,K^2$ of all
automorphism of the form

\centerline{$(x,y)\mapsto (zx, zy +a x^{n+1})$}

\noindent for some $z\in K^*$ and $a\in K$.

 \begin{lemma}\label{NLp} Let $K$ be an infinite field of characteritic $p$ and let $E$ be a subgroup of
 $\Elem_0(K^2)$. 
 
 If $E$ contains 
 $G_n(K)$ for infinitely many integers $n$ prime to
 $p$, then $E$ is not linear over a field.
 In particular, $\Elem_0(K^2)$ is not linear over a field.
 \end{lemma}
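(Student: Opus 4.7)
The plan is to argue by contradiction: assume $E \subset GL(V)$ for some finite-dimensional vector space $V$ over a field $L$, and derive a contradiction from the hypothesis that $G_n(K) \subset E$ for infinitely many $n$ prime to $p$. The argument then splits into two cases according to whether $\ch L = p$ or not.

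If $\ch L \neq p$, the argument is immediate. Pick any one $n$ with $G_n(K) \subset E$; then the subgroup $\{(1,a) : a \in K\}$ of $G_n(K)$ is isomorphic to the additive group $(K,+)$, which is an elementary abelian $p$-group of infinite rank because $K$ is infinite of characteristic $p$. Lemma \ref{inf-rank} rules out any embedding of such a group into $GL(V)$, giving the contradiction.

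The main case is $\ch L = p$, which uses Lemmas \ref{char} and \ref{degree} in tandem. After enlarging $L$, we may assume it is algebraically closed. The conjugation action of the diagonal torus $\{(z,0) : z \in K^*\} \simeq K^*$ on $\End(V)$ yields a generalized eigenspace decomposition $\End(V) = \bigoplus_{\chi \in X(K^*)} \End(V)_{(\chi)}$, and since $\End(V)$ is finite-dimensional, only finitely many $\chi$ appear with $\End(V)_{(\chi)} \neq 0$. Crucially, this finite set of occurring characters is determined once and for all by the embedding of the single torus, independently of which $G_n$ we subsequently restrict to. Now for each $n$ prime to $p$ with $G_n(K) \subset E$, Lemma \ref{char} applied to the restricted injection $G_n(K) \hookrightarrow GL(V)$ produces a semi-algebraic character $\chi_n$ of degree $n$ with $\End(V)_{(\chi_n)} \neq 0$. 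Lemma \ref{degree} says characters of distinct degrees (prime to $p$) are distinct, so the $\chi_n$ are pairwise distinct as $n$ varies; having infinitely many such $n$ forces the finite set of occurring characters to be infinite, the desired contradiction.

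For the ``in particular'' statement, every automorphism of the form $(x,y) \mapsto (zx, zy + a x^{n+1})$ with $z \in K^*$, $a \in K$ is elementary and fixes the origin, so $\Elem_0(K^2) \supset G_n(K)$ for every $n \geq 1$, hence for infinitely many $n$ prime to $p$, and the first part of the lemma applies. The main conceptual obstacle to have overcome here has already been paid for in the preceding section: recognizing that the characters arising from unipotent subgroups via Lemma \ref{char} must be semi-algebraic of a prescribed degree, so that Lemma \ref{degree}'s rigidity result can produce unboundedly many distinct weights from unboundedly many choices of $n$.
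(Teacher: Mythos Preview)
Your proof is correct and follows essentially the same approach as the paper: use Lemma~\ref{inf-rank} to force $\ch L=p$, then exploit that the torus $K^*$ is common to all the $G_n(K)$ so that the finite set of characters occurring in $\End(V)$ must contain, by Lemma~\ref{char}, a semi-algebraic character of degree $n$ for each of the infinitely many admissible $n$, contradicting Lemma~\ref{degree}. Your explicit emphasis on the torus being shared across all $G_n$ is exactly the point the paper makes when it writes ``$K^*$ is a common torus of all subgroups $G_n(K)$.''
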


 \begin{proof} Assume otherwise, and let
 $\rho: E\to GL(V)$, where $V$ is a finite dimensional vector space over an algebraicall closed field $L$. The group
 $E$ contains some elementary abelian $p$-groups of
 infinite rank. Therefore, by Lemma \ref{inf-rank}, the field
 $L$ has characteristic $p$.
 
 The group $K^*$ is identified with the subgroup of
 $\Aut\,K^2$ of linear homotheties. Note that 
 $K^*$ is a common torus of all subgroups $G_n(K)$ for any $n\geq 1$.
 Let $\Omega$ be the set of all characters $\chi$ of
 $K^*$, such that $\End(V)_{(\chi)}\neq 0$
 
By Lemma \ref{char}, $\Omega$ contains a
 character in $\chi_n\in {\cal X}_n(K^*)$ for infinitely many positive integers
 $n$ prime to $p$. By Lemma \ref{degree} these characters are all
 distincts, which contradicts that $\Omega$ is a finite set.
 
 \end{proof}
 
 \begin{lemma}\label{H} The amalgamated product
 $\GL(K^2)*_{B_0}\,\Elem_0(K^2)$ satisfies hypothesis
 ${\cal H}$.
 \end{lemma}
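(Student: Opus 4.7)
The plan is to exploit the fact that $B_0$ is precisely the stabilizer in $\GL(K^2)$ of the $y$-axis $\{(0,y):y\in K\}$, so that $\gamma a\gamma^{-1}\notin B_0$ can be arranged either by displacing that invariant line (via a linear conjugator) or by destroying linearity altogether (via an elementary conjugator). Given $a\in B_0\setminus\{1\}$ I would split cases on whether $a$ is a scalar matrix.

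Suppose first that $a$ is not a scalar multiple of $\id$. Then $a$ has at most two eigenlines, hence preserves at most two lines through the origin in $K^2$. Since the set of such lines has cardinality $|K|+1\geq 3$, I can pick a line $\ell\subset K^2$ that is not $a$-invariant, and then $\gamma\in\GL(K^2)\subset\Gamma$ with $\gamma(\ell)$ equal to the $y$-axis. Since $a$ does not preserve $\gamma^{-1}(\text{$y$-axis})=\ell$, the conjugate $\gamma a\gamma^{-1}$ does not preserve the $y$-axis, so $\gamma a\gamma^{-1}\in\GL(K^2)\setminus B_0$, as required.

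Suppose instead that $a=z\cdot\id$ with $z\in K^*\setminus\{1\}$. Linear conjugation fixes $a$, so I would turn to the elementary automorphism $\tau(x,y)=(x,y+x^2)$ of $\Elem_0(K^2)\subset\Gamma$. A direct computation yields
\[
\tau a\tau^{-1}(x,y)=(zx,\;zy+(z^2-z)x^2),
\]
and since $z\neq 0,1$ the coefficient $z(z-1)$ is nonzero, so $\tau a\tau^{-1}$ has a genuine quadratic term, is not linear, and in particular does not lie in $B_0\subset\GL(K^2)$.

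I do not foresee any real obstacle: the line-counting in the first case goes through over every field (including $K=\F_2$, where the scalar case is vacuous because $K^*=\{1\}$), and the one-line computation in the second case is immediate. The only point to verify is the unambiguous identification $B_0=\Elem_0(K^2)\cap\GL(K^2)$ used implicitly when concluding that a nonlinear element of $\Elem_0(K^2)$ cannot belong to $B_0$, which is built into the van der Kulk decomposition recalled in \S 1.
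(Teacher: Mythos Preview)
Your proof is correct and follows essentially the same two-case split as the paper: for non-scalar $a$ you conjugate inside $\GL(K^2)$ to knock $a$ out of the Borel, and for scalar $a=z\,\id$ you use the very same elementary conjugator $\tau(x,y)=(x,y+x^2)$, arriving at the identical formula $(x,y)\mapsto(zx,\,zy+(z^2-z)x^2)$. The only difference is cosmetic: you spell out the line-counting argument (at most two invariant lines for a non-scalar matrix, against $|K|+1\geq 3$ lines in $\P^1_K$) where the paper simply asserts that a suitable $\gamma\in\GL(K^2)$ exists, and you note that the argument survives over $\F_2$, which the paper does not need since the lemma is only applied for infinite $K$.
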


 \begin{proof} Let $g\in B_0$ with $g\neq 1$.
 
 First, if $g$ is not an homothety,
 there is $\gamma\in GL(K^2)$ such that $g^\gamma$ is not upper triangular and therefore we have $g^\gamma\notin B_0$. 
 
 Otherwise $g$ is an homothety with ratio $\lambda\neq 1$.
Let $\gamma$ be the automorphism $(x,y)\mapsto (x,y+x^2)$. 
Then  $g^\gamma$ is the automorphism

\centerline{$(x,y)\mapsto (\lambda x,\lambda y +(\lambda^2-\lambda) x^2)$,}

\noindent so $g^\gamma$  is not in $B_0$. 
 
\end{proof}

 \begin{MainA2} If $K$ be an infinite field of characteritic $p$,
then $\Aut_0\,K^2$ is not linear even over a ring.
\end{MainA2}

\begin{proof}  Assume otherwise. Using van der Kulk Theorem
and lemmas \ref{H}, \ref{criterion} and \ref{linearity}
it follows that $\Aut_0\,K^2$ is linear over a field. 
This contradicts the fact that 
its subgroup $\Elem_0(K^2)$ is not linear over a field by
Lemma \ref{NLp}.
\end{proof}

 \section{The Tits Ping-Pong}

\bigskip
\noindent
{\it 6.1 The Ping-Pong lemma}

\noindent
Let $(E_p)_{p\in P}$ be a collection of groups indexed by
a  set $P$. Let $\Gamma:=*_{p\in P}\,E_p$ be the free product of
these groups. Let $S$ be the set of all finite sequences
$(p_1,\dots,p_n)$ such that $p_i\neq p_{i+1}$ for any
$i<n$. For each $p\in P$, set 
$E^*_p=E\setminus \{1\}$.

Let $\gamma\in \Gamma$. There is a unique
${\bf p}=(p_1,\dots,p_n)\in P$ and a unique decomposition  
of $\gamma$

\centerline{$\gamma=\gamma_1\dots\gamma_n$,} 

\noindent
where $\gamma_i\in E^*_{p_i}$. The sequence ${\bf p}$ is
called the {\it type} of $\gamma$.

The free product $\Gamma:=*_{p\in P}\,E_p$ is called {\it nontrivial} if 

(i) for any $p\in P$, $E_p\neq\,\{1\}$,

(ii) $\Card\,P\geq 2$, and

(iii) if $\Card\,P= 2$, $\Gamma$ is not the trivial free product
$\Z/2\Z*\Z/2\Z$.

\begin{lemma}\label{Ping-Pong} Assume that a nontrivial 
free product $\Gamma=*_{p\in P}\,E$ acts on some set $\Omega$. Let $(\Omega_p)_{p\in P}$ be a collection of subsets 
in $\Omega$. Assume

(i) The subsets $\Omega_p$ are nonempty and disjoint, and

(ii) we have $E^*_p.\Omega_q\subset \Omega_p$ whenever $p\neq q$.

Then the action of $\Gamma$ on $\Omega$ is faithful.
\end{lemma}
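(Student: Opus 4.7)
The plan is to establish faithfulness by contrapositive via the standard ping-pong walk: given $\gamma \in \Gamma$ with $\gamma \neq 1$, written in reduced form $\gamma = \gamma_1 \cdots \gamma_n$ of type ${\bf p} = (p_1, \ldots, p_n)$, I will exhibit $\omega \in \Omega$ with $\gamma \cdot \omega \neq \omega$. The key mechanism is as follows: if $\omega \in \Omega_q$ with $q \neq p_n$, then hypothesis (ii) gives $\gamma_n \omega \in \Omega_{p_n}$; since the type is reduced, $p_{n-1} \neq p_n$, so hypothesis (ii) applies again, giving $\gamma_{n-1}\gamma_n \omega \in \Omega_{p_{n-1}}$. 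Iterating down the word yields $\gamma \omega \in \Omega_{p_1}$. Thus, provided $q$ can be chosen distinct from both $p_n$ and $p_1$, the disjointness in hypothesis (i) forces $\gamma \omega \neq \omega$.

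The easy case is $n = 1$, or $n \geq 2$ with $p_1 = p_n$: then I only need $q \neq p_1$, which exists since $\Card\,P \geq 2$, and the walk above finishes immediately. The next case is $n \geq 2$ with $p_1 \neq p_n$ and $\Card\,P \geq 3$: here any $q \in P \setminus \{p_1, p_n\}$ works in the walk. The main obstacle is the subcase where $n \geq 2$, $p_1 \neq p_n$, and $\Card\,P = 2$, so $P = \{p_1, p_n\}$; no third letter is available to start the walk.

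To handle this last subcase I reduce to the easy case by a conjugation trick. By nontriviality condition (iii), at least one of $E_{p_1}, E_{p_n}$ has order $\geq 3$; say $\Card\,E_{p_1} \geq 3$ (the other possibility is symmetric, using a conjugation on the right). Then I may choose $h \in E_{p_1}^*$ with $h \neq \gamma_1^{-1}$, so that $h\gamma_1 \in E_{p_1}^*$. The conjugate
\[
h\gamma h^{-1} = (h\gamma_1)\gamma_2 \cdots \gamma_n h^{-1}
\]
is then in reduced form of type $(p_1, p_2, \ldots, p_n, p_1)$: alternation is preserved because $p_n \neq p_1$, and this word now \emph{starts and ends} with $p_1$. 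By the easy case, there exists $\omega_0 \in \Omega$ with $h\gamma h^{-1} \cdot \omega_0 \neq \omega_0$; since conjugation by $h$ is a bijection of $\Omega$, it follows that $\gamma \cdot (h^{-1}\omega_0) \neq h^{-1}\omega_0$. This covers all cases and proves that the action is faithful.
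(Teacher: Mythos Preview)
Your proof is correct and follows essentially the same ping-pong argument as the paper. The only cosmetic difference is that for the two-factor case the paper invokes Lemma~\ref{conj} (applied with trivial amalgam $A=\{1\}$, so hypothesis~$\mathcal{H}$ is vacuous) to obtain a conjugate whose type starts and ends with the same index, whereas you carry out that conjugation explicitly by hand using the $\Card\,E_{p_1}\ge 3$ assumption---which is exactly the content of the relevant step inside the proof of Lemma~\ref{conj}.
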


\noindent The hypothesis (ii) is called a Ping-Pong hypothesis.

\begin{proof} Let $\gamma\in\Gamma$ with $\gamma\neq 1$,
and let ${\bf p}=(p_1,\dots, p_n)$ be its type. 
We claim that there is $x\in \Omega$ such that $\gamma.x\neq x$

If $\Card\,P\geq 3$ let $q\in P$ with 
$q\neq p_1$ and $q\neq p_n$. Let $x\in \Omega_q$.
By the Ping-Pong hypothesis,  $\gamma.x$ belongs to
$\Omega_{p_1}$, therefore $\gamma.x\neq x$. 

If $\Card\,P=2$ it can be assumed that $P=\{1,2\}$. It follows from Lemma \ref{conj} that $\gamma$ has a conjugate $\gamma'$ of
type ${\bf q}=(q_1,\dots,q_m)$ with $q_1=q_m=1$. Let 
$x'\in \Omega_2$. By the Ping-Pong hypothesis,  $\gamma'.x'$ belongs to
$\Omega_{1}$, therefore $\gamma'.x\neq x'$. It follows that there is some $x\in\Omega$ with $\gamma.x\neq x$.

In both cases, any $\gamma\neq 1$ acts nontrivially.
Hence  the action of $\Gamma$ on $\Omega$ is faithful.
 \end{proof}

\noindent {\it 6.2 Mixture of free products, amalgamated products and semi-direct product}

\noindent This section is devoted to three technical lemmas,
for groups with a mix of free products, amalgamated products and
semi-direct products.

Let $G$ be a group. A {\it $G$-structure} on a group $E$ is a $G$-action on $E$, 
where $G$ acts by  group automorphisms. Equivalently, it means that the semi-direct product 
$G\ltimes E$ is well defined. For simplicity, a group $E$ with a $G$-structure is called a 
{\it $G$-group}. Two $G$-groups $E$, $E'$  are called
{\it $G$-isomorphic} if there is an isomorphism from $E$ to $E'$ which commutes with the
$G$-structure.

Let $P$ be a set on which $G$ acts, let $E$ be a group and 
for each $p\in P$ let $E_p$ be a copy of $E$. A $G$-structure on $*_{p\in P}\, E_p$ is called
{\it compatible with the $G$-action on $P$} if $E_p^g=E_{g(p)}$ for any $g\in G$ and $p\in P$. Roughly speaking, 
it means that $G$ acts on  $*_{p\in P}\, E_p$ by permuting the free factors. When it is not ambiguous, we just say that the 
$G$-structure is {\it compatible}. 

\noindent 

Now let $G$, $U$ be two groups sharing a common group
$A$. Assume moreover that $U=A\ltimes E$,
for some normal subgroup $E$ of $U$. Set
$\Gamma_0=G*_{A}U$. The natural map 
$U\to U/E\simeq A$ induces a group morphism

\centerline {$\chi: \Gamma_0=G*_{A}U\to G=G*_A A$.}

\noindent and let $\Gamma_1$ be its kernel.

Set $P=G/A$. For $\gamma\in G$, the subgroup
$E^\gamma$ lies obviousy in $\Gamma_1$ and it depends only on
$\gamma\mod A$.

\begin{lemma}\label{mixing1} The group $\Gamma_1$ is the free product
of all $E^\gamma$, where $\gamma$ runs over $P$.
\end{lemma}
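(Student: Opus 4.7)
The plan is to construct a natural homomorphism $\Phi: *_{p \in P} E^p \to \Gamma_1$ and prove it is an isomorphism by direct manipulation of reduced words in $\Gamma_0 = G *_A U$. First, since $\chi$ is split by the inclusion $G \hookrightarrow \Gamma_0$, we have $\Gamma_0 = G \ltimes \Gamma_1$. As $A$ normalizes $E$ in the semi-direct product $U = A \ltimes E$, the subgroup $E^\gamma$ depends only on $\gamma A \in P$; and since $\chi$ kills $E$, each $E^p$ lies in $\Gamma_1$. The universal property of free products then produces a natural homomorphism $\Phi: *_{p \in P} E^p \to \Gamma_1$, and the task is to show $\Phi$ is bijective.

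For surjectivity, I would take $\eta \in \Gamma_1$ and write it as a reduced word in $\Gamma_0$. Using $U = A \ltimes E$ and $A \cap E = \{1\}$, every $u \in U \setminus A$ decomposes uniquely as $u = \alpha e$ with $\alpha \in A$ and $e \in E^* := E \setminus \{1\}$; in particular $E^* \subset U \setminus A$. Substituting these into the reduced word for $\eta$ and absorbing each $\alpha$ into an adjacent $G$-factor (with a small case analysis according to whether the original word starts and ends on the $G$- or $U$-side) rewrites $\eta$ uniformly as $h_1 e_1 h_2 e_2 \cdots h_m e_m h_{m+1}$, where each $e_j \in E^*$, each $h_j \in G$, and $h_j \notin A$ for $1 < j \leq m$. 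Setting $k_j := h_1 h_2 \cdots h_j$ and $f_j := k_j e_j k_j^{-1} \in E^{k_j}$, the telescoping identity gives $\eta = f_1 f_2 \cdots f_m \cdot k_{m+1}$. Applying $\chi$, which kills every $f_j$, yields $\chi(\eta) = k_{m+1} \in G$; and $\eta \in \Gamma_1$ forces $k_{m+1} = 1$, so $\eta = f_1 \cdots f_m$. Since $k_j^{-1} k_{j+1} = h_{j+1} \notin A$ for $1 \leq j < m$, the classes $k_j A \in P$ satisfy $k_j A \neq k_{j+1} A$, so $\eta$ lies in $\Phi\bigl(*_{p \in P} E^p\bigr)$.

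For injectivity, I would suppose $\Phi(f_1 \cdots f_m) = 1$ is a nontrivial reduced word in $*_{p \in P} E^p$, with $f_j \in E^{k_j} \setminus \{1\}$ and $k_j A \neq k_{j+1} A$. Writing $f_j = k_j e_j k_j^{-1}$ with $e_j \in E^*$ and $h_j := k_{j-1}^{-1} k_j \in G \setminus A$ for $j \geq 2$, telescoping gives $k_1 \cdot e_1 h_2 e_2 \cdots h_m e_m \cdot k_m^{-1} = 1$, equivalently $e_1 h_2 e_2 h_3 \cdots h_m e_m = k_1^{-1} k_m \in G$. But the left side is a reduced word in $\Gamma_0 = G *_A U$ of alternating type, starting and ending with letters in $U \setminus A$, of length $2m - 1 \geq 1$; by the uniqueness of reduced word decompositions (Lemma \ref{words}), it cannot equal any element of $G$. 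This contradiction forces $m = 0$, completing injectivity. I expect the main obstacle to be keeping the case analysis in the surjectivity step clean across the four possibilities for how the reduced word for $\eta$ starts and ends; the splitting $\Gamma_0 = G \ltimes \Gamma_1$ makes each variant routine but requires careful bookkeeping of the position of the residual $A$-factor.
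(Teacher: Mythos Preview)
Your argument is correct, but it takes a genuinely different route from the paper's. The paper does not touch reduced words at all: it sets $\Gamma'_1 = *_{\gamma\in P}\,E^\gamma$, observes that $G$ acts on $\Gamma'_1$ by permuting the factors, forms $\Gamma'_0 := G\ltimes \Gamma'_1$, and then appeals to the universal properties of amalgamated, free, and semi-direct products to produce mutually inverse homomorphisms $\Gamma_0 \leftrightarrow \Gamma'_0$; the restriction to kernels gives $\Gamma_1 \simeq \Gamma'_1$. This is a two-line conceptual argument that also yields Lemma~\ref{mixing2} for free. Your approach, by contrast, is entirely combinatorial: you build $\Phi$ and then verify surjectivity and injectivity by explicit manipulation of alternating words in $G*_A U$, using the normal-form theorem (Lemma~\ref{words}). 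What your approach buys is that nothing is hidden behind the phrase ``using the universal properties'': every step is visible, and in particular the injectivity half makes transparent exactly how the condition $k_jA\neq k_{j+1}A$ translates into a nontrivial reduced word in $\Gamma_0$. The price is the bookkeeping you anticipated in the four start/end cases for the surjectivity step; this is routine (absorbing the $\alpha_j$ from $u_j=\alpha_j e_j$ into the neighbouring $G$-letter, with $h_1$ and $h_{m+1}$ allowed to lie in $A$), but the paper's universal-property argument sidesteps it entirely.
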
 

\begin{lemma}\label{mixing2} For each $p\in P$, let $E_p$ be
a copy of $E$. A compatible $G$-structure on $*_{p\in P} E_p$
obviously provides a $A$-structure on $E$, and we have

\centerline{$G\ltimes (*_{p\in P} E_p)\simeq 
G*_{A} (A\ltimes E)$.}
\end{lemma}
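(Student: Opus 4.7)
The plan is to apply Lemma~\ref{mixing1}. Set $\Gamma_0 := G *_A (A \ltimes E)$, let $\chi : \Gamma_0 \to G$ be the natural projection (induced by $U \to U/E \simeq A \hookrightarrow G$), and let $\Gamma_1 := \ker \chi$. The inclusion $G \hookrightarrow \Gamma_0$ is a section of $\chi$, so we have a semidirect decomposition $\Gamma_0 \simeq G \ltimes \Gamma_1$. By Lemma~\ref{mixing1}, $\Gamma_1$ is the free product of the conjugates $E^{\gamma}$ as $\gamma$ runs over $P = G/A$; choosing coset representatives $\gamma_p \in G$ for $p \in P$, this identifies $\Gamma_1 \simeq *_{p \in P} E_p$, where each $E_p$ is a copy of $E$ realised as $E^{\gamma_p} \subset \Gamma_0$.

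Next I would verify that the conjugation action of $G$ on $\Gamma_1$ yields a compatible $G$-structure on $*_{p \in P} E_p$: for any $g \in G$ and $\gamma \in P$, conjugation maps $E^{\gamma}$ to $E^{g\gamma}$, so under our identification $E_p$ is sent to $E_{g(p)}$, as required by the definition of compatibility. Furthermore, restricting this action to the stabiliser $A$ of the base point $[A] \in P$ produces an $A$-action on $E_{[A]} = E$ that coincides with the original $A$-action on $E$ inside $U = A \ltimes E$, since for $a \in A$ and $e \in E$ the element $a e a^{-1}$ has the same value whether computed in $U$ or in $\Gamma_0$.

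To finish, I would compare this compatible $G$-structure with the one given in the hypothesis. The key observation is that any compatible $G$-structure on $*_{p \in P} E_p$ is uniquely determined by the resulting $A$-action on $E = E_{[A]}$: transitivity of $G$ on $P$ and compatibility ($E_p^g = E_{g(p)}$) force the action of an arbitrary $g \in G$ on an arbitrary factor $E_p$ to be determined, via the fixed coset representatives, by the $A$-action on $E_{[A]}$. Since both compatible $G$-structures (the one manufactured from $\Gamma_0$ and the one given) restrict to the same $A$-structure on $E$, they agree. The desired isomorphism $G \ltimes (*_{p \in P} E_p) \simeq G *_A (A \ltimes E)$ then follows by transporting the semidirect product structure.

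The main obstacle is the final uniqueness/canonicity step: one must choose coset representatives $\gamma_p$ coherently and trace through the identifications $E^{\gamma_p} \simeq E_p$ in a way that respects the hypothesised compatible $G$-action. This is essentially bookkeeping, but it is the place where the abstract compatibility condition has to be matched with the concrete conjugation in $\Gamma_0$.
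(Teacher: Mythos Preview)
Your argument is correct, and it is close in spirit to the paper's but organized differently. The paper proves Lemmas~\ref{mixing1} and~\ref{mixing2} in a single block: it sets $\Gamma'_0 = G \ltimes (*_{\gamma \in P} E^\gamma)$ and directly constructs mutually inverse morphisms $\Gamma_0 \leftrightarrow \Gamma'_0$ by invoking the universal properties of amalgamated, free, and semidirect products, then reads off both lemmas; for the arbitrary compatible $G$-structure of Lemma~\ref{mixing2} it simply says the same universal-property argument applies. Your route instead takes Lemma~\ref{mixing1} as given and then compares the hypothesised compatible $G$-structure with the conjugation one arising inside $\Gamma_0$; this requires the extra (but correct) observation that two compatible $G$-structures on $*_{p\in P} E_p$ inducing the same $A$-action on $E_{[A]}$ yield $G$-isomorphic groups, hence isomorphic semidirect products. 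The paper's approach avoids that comparison step entirely but is correspondingly terser; your approach makes the role of the induced $A$-action more transparent, at the cost of the bookkeeping you flag in your last paragraph. One small caution: your sentence ``they agree'' should be read as ``they give $G$-isomorphic groups'' rather than ``they are literally the same action'', since the identifications $E_p\simeq E$ are fixed in advance and the two actions need not coincide pointwise---the intertwining automorphism of $*_p E_p$ has to be built from the coset representatives, exactly as you indicate.
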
 

\begin{proof} Set $\Gamma'_1=*_{\gamma\in P}\, E^\gamma$.
Clearly $G$ acts over $\Gamma'_1$, so we can consider

\centerline{$\Gamma'_0:=G\ltimes \Gamma'_1$.}

\noindent Using the universal properties of amalgamated, free and
semi-direct products, one defines morphisms
$\phi:\Gamma_0\to \Gamma'_0$ and $\psi:\Gamma'_0\to \Gamma_0$
which are inverse of each other. It follows that 
$\Gamma_0$ and $\Gamma'_0$ are isomorphic, and 
$\phi$ induces an isomorphism from $\Gamma_1$
to $\Gamma_1'$, which proves Lemma \ref{mixing1}.

Let $1$ be the distinguished point of
$G/A$. We have $E_1^a=E_1$ for any $a\in A$, hence the group
$E=E_1$ has an $A$-structure. The rest of the proof 
of Lemma \ref{mixing 2} follows from
universal properties, as before.

\end{proof}

For the last lemma, let $G$ be a group acting on a set $P$. Let
$E$ and $E'$ be two groups. For each $u\in P$, let 
$G_u$ be the stabilizer in $G$ of $u$ and 
$E_p$ (respectively $E'_p$) be a copy of $E$ (respectively of
$E'$). 

Assume given some compatible $G$-structures on 
$*_{p\in P}\,E_p$ and $*_{p\in P}\,E'_p$. Obviously, it
provides some $G_p$-structure on  $E_p$ and $E'_p$, for any $p\in P$.

\begin{lemma}\label{mixing3} Assume that the groups $E_p$ and $E'_p$ are $G_p$-isomorphic 
for any $p\in P$. Then the groups $G\ltimes(*_{p\in P}\,E_p)$ and 
$G\ltimes(  *_{p\in P}\,E'_p)$ are isomorphic.
\end{lemma}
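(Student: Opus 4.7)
The plan is to construct a $G$-equivariant group isomorphism $\Phi : *_{p \in P}\,E_p \to *_{p \in P}\,E'_p$, from which the required isomorphism between the semi-direct products follows immediately by taking $\id_G \ltimes \Phi$.

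First, I would pick a set $O \subset P$ of representatives for the $G$-orbits on $P$. For each $p \in P$, write $o(p) \in O$ for the representative of its orbit, and choose once and for all an element $g_p \in G$ with $g_p \cdot o(p) = p$, taking $g_o = 1$ when $p = o \in O$. By hypothesis, for every $o \in O$ there is a $G_o$-equivariant group isomorphism $\phi_o : E_o \to E'_o$. Because both $G$-structures are compatible with the $G$-action on $P$, conjugation by $g_p$ inside $G \ltimes (*_{p \in P} E_p)$ (respectively inside $G \ltimes (*_{p \in P} E'_p)$) restricts to a group isomorphism $E_{o(p)} \to E_p$ (respectively $E'_{o(p)} \to E'_p$). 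I would then transport $\phi_{o(p)}$ along these conjugations by setting
\[
\phi_p : E_p \to E'_p, \qquad \phi_p(x) = g_p\,\phi_{o(p)}(g_p^{-1} x g_p)\,g_p^{-1}.
\]
The universal property of the free product promotes the family $(\phi_p)_{p \in P}$ to a group homomorphism $\Phi : *_{p \in P} E_p \to *_{p \in P} E'_p$, and applying the same construction to the inverses $\phi_o^{-1}$ provides a two-sided inverse, so $\Phi$ is an isomorphism of groups.

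The key step, and the only non-trivial one, is to check that $\Phi$ is $G$-equivariant, i.e.\ that $\Phi(g x g^{-1}) = g\,\Phi(x)\,g^{-1}$ for every $g \in G$ and every $x \in E_p$. The trick is to factor $g = g_{g \cdot p}\,h\,g_p^{-1}$, where $h := g_{g \cdot p}^{-1}\,g\,g_p$. Since $h$ sends $o(p)$ to $o(g \cdot p) = o(p)$, one has $h \in G_{o(p)}$. Substituting this factorization into the definition of $\phi_{g \cdot p}(g x g^{-1})$ and using that $\phi_{o(p)}$ intertwines the $G_{o(p)}$-actions on $E_{o(p)}$ and $E'_{o(p)}$ yields the required identity after straightforward cancellations. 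As a byproduct, the same computation shows that $\phi_p$ does not depend on the auxiliary choice of $g_p$. Once $\Phi$ is known to be $G$-equivariant, the map $\id_G \ltimes \Phi$ is the desired isomorphism of semi-direct products.

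The only obstacle is notational rather than conceptual: one needs to track two parallel $G$-structures and verify that the transport formula defining $\phi_p$ is compatible with the global $G$-action. Once the reduction $h \in G_{o(p)}$ has been identified, the remainder is formal and relies only on universal properties of free and semi-direct products, so no substantive difficulty arises.
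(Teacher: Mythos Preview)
Your argument is correct. The transporter construction $\phi_p(x)=g_p\,\phi_{o(p)}(g_p^{-1}xg_p)\,g_p^{-1}$ does produce a well-defined $G$-equivariant isomorphism of the free products, and the factorisation $g=g_{g\cdot p}\,h\,g_p^{-1}$ with $h\in G_{o(p)}$ is exactly the right device to reduce the equivariance check to the given $G_{o(p)}$-equivariance of $\phi_{o(p)}$.

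The paper proceeds differently. Instead of building $\Phi$ by hand, it first treats the transitive case and invokes Lemma~\ref{mixing2}, which identifies $G\ltimes(*_{p\in P}E_p)$ with the amalgamated product $G*_{G_u}(G_u\ltimes E_u)$ for any chosen basepoint $u\in P$. In that description the only datum that can vary is the $G_u$-group $E_u$, so a $G_u$-isomorphism $E_u\simeq E'_u$ immediately yields an isomorphism of the amalgamated products and hence of the semi-direct products; the non-transitive case then follows orbit by orbit. Your approach is more elementary and self-contained: it bypasses Lemma~\ref{mixing2} entirely and handles all orbits simultaneously, at the cost of an explicit cocycle-style verification. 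The paper's route is shorter once Lemma~\ref{mixing2} is in hand and makes the isomorphism conceptually transparent, but your construction has the advantage of exhibiting the map concretely and showing directly that it is $G$-equivariant rather than deducing this from an abstract identification.
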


\begin{proof} First, assume that $G$ acts transitively on $P$. Let $u$ be a point of $P$, 
and let $A:=G_u$ be its stabilizer. It follows from Lemma \ref{mixing2} that

\centerline{$G\ltimes(*_{p\in P}\,E_p)   \simeq G*_{G_u}(G_u\ltimes E_p))$, and}

\centerline{$G\ltimes(*_{p\in P}\,E'_p)\simeq G*_{G_u}(G_u\ltimes E'_p)$.}

\noindent Hence  the groups 
$*_{p\in P}\,E_p$ and $*_{p\in P}\,E'_p$ are $G$-isomorphic. 
From this,  it follows that the groups 
$*_{p\in P}\,E_p$ and $*_{p\in P}\,E'_p$ are 
$G$-isomorphic even if $G$ does not act transitively on $P$. Therefore $G\ltimes(*_{p\in P}\,E_p)$ and 
$G\ltimes(  *_{p\in P}\,E'_p)$ are isomorphic.
\end{proof}

\bigskip
\noindent
{\it 6.3 The subgroups of $GL_S(2,K[t])$ in $GL(2,K[t])$}

\noindent For $G(t)\in GL(2,K[t])$, let $G(0)$ its evaluation at $0$. For any subgroup $S\subset GL(2,K)$
set

\centerline
{$GL_S(2,K[t]):=\{G(t)\in GL(2,K[t])\vert G(0)\in S\}$.}

\noindent For $S=\{1\}$, the  group $GL_S(2,K[t])$ will be denoted by
$GL_1(2,K[t])$.

For any $\gamma\in\P^1_K$, let $e_\gamma\in\End(K^2)$ such that
$e_\gamma^2=0$ and $\Image e_\gamma=\gamma$. Since
$e_\gamma$ is unique up to a constant multiple, the group

\centerline{$E_\gamma:=\{ \id + tf(t) e_\gamma \vert\,f\in K[t]\}$}

\noindent is a well defined subgroup of $GL_1(2,K[t])$.

\begin{lemma}\label{generation} The group $GL_1(2,K[t])$ is generated by its subgroups $E_\gamma$, where $\gamma$ runs over $\P^1_K$
\end{lemma}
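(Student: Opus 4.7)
The plan is to derive the result from Nagao's theorem combined with Lemma \ref{mixing1}; this will in fact give the stronger statement that $GL_1(2,K[t])$ is the free product of the $E_\gamma$. First observe that $GL_1(2,K[t]) \subseteq SL_2(K[t])$: the determinant of any $G \in GL_1(2,K[t])$ lies in $K[t]^\times = K^\times$ and evaluates to $1$ at $t=0$, hence is identically $1$.

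Next, let $\mathcal{B}$ denote the upper-triangular Borel subgroup of $SL_2$, and apply Lemma \ref{mixing1} with $G = SL_2(K)$, $A = \mathcal{B}(K)$, $U = \mathcal{B}(K[t])$, and $E = U \cap GL_1(2,K[t])$. The decomposition $U = A \ltimes E$ holds: any element of $\mathcal{B}(K[t])$ has diagonal entry in $K[t]^\times = K^\times$, which equals $1$ at $t=0$ iff it equals $1$ everywhere, so $E$ consists exactly of the upper unipotent matrices with off-diagonal entry in $tK[t]$, that is, $E = E_{[1:0]}$. By Nagao's theorem (see e.g.\ Serre, \emph{Trees}, Chapter II.1), $G *_A U = SL_2(K[t])$, and the morphism $\chi$ of Lemma \ref{mixing1} becomes evaluation at $t=0$, whose kernel is $GL_1(2,K[t])$. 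Lemma \ref{mixing1} therefore yields
$$GL_1(2,K[t]) = *_{\gamma \in G/A}\, E^\gamma = *_{\gamma \in \P^1_K}\, E^\gamma.$$

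The last step is to identify $E^\gamma$ with $E_\gamma$: for $g \in SL_2(K)$ with $g \cdot [1:0] = \gamma$, the conjugate $g e_{[1:0]} g^{-1}$ is a rank-one nilpotent with image $\gamma$, hence equals $\alpha \cdot e_\gamma$ for some $\alpha \in K^\times$; since multiplication by $\alpha$ bijects $tK[t]$ onto itself, $g E g^{-1} = E_\gamma$. The only substantive input is Nagao's theorem (equivalently, Gaussian elimination via the Euclidean algorithm in $K[t]$); everything else is a direct application of the machinery built in Section 6.2.
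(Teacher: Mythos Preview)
Your argument is correct and takes a genuinely different route from the paper. The paper gives a self-contained degree-reduction argument: writing $G(t)=\sum_n A_n t^n$ and exploiting that $\det G(t)\in K^\times$, it shows the top coefficient $A_N$ has rank one with image some line $\delta$, then finds an element of $E_\delta$ whose left multiplication lowers the degree, and inducts. This establishes only generation; the free-product structure is proved separately in Lemma~\ref{Tits} by a Ping-Pong argument on $K[t]^2$. Your approach instead imports Nagao's theorem and feeds it into Lemma~\ref{mixing1}, obtaining generation and the free-product decomposition simultaneously --- so Lemma~\ref{Tits} and its Ping-Pong proof become redundant. The trade-off is that the paper's route is entirely self-contained (and its hands-on elimination is essentially the Euclidean algorithm that underlies Nagao's theorem anyway), whereas your route is shorter and more structural but relies on an external citation. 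Both are valid; yours is more efficient if one is willing to quote Nagao.
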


\begin{proof} 

For $A$, $B\in\End(K^2)$, set

\centerline{$<A\vert \,B>=\det\,(A+B) -\det\,A-\det\,B$.}

Any element $G(t)\in GL_1(2,K[t])$ can be written as a
polynomial

\centerline{$G(t)=\sum_{n\geq 0}\, A_n t^n$}

\noindent where $A_n$ belong to $\End(K^2)$ and $A_0=\id$. 

The proof runs by induction on the degree $N$ of $G$. 
It can be assumed that $N\geq 1$. For any $n\geq 0$, let
$I_n$ be the set of pairs of integers $(i,j)$
with $0\leq i<j$ and $i+j=n$. We have
$\det\,G(t)=\sum_{n\geq 0}\, c_n t^n$, where the scalars $c_n$ are given by

\centerline{$c_n=\det\,A_{n/2} + \sum_{(i,j)\in I_n}\, <A_i\vert A_j>$,}

\noindent where it is understood that  $\det\,A_{n/2}=0$ if $n$ is odd.

Since $\det\,G(t)$ is an invertible polynomial, we have $c_n=0$ for any $n>0$.
The identity $c_{2N}=0$ implies that $\det\,A_N=0$ hence $A_N$ has rank one.
Let $\delta$ be the image of $\,A_N$ and 
set $E=\{a\in\End(K^2)\vert\,\Image\,a\subset\delta\}$

There exist an integer $n$ such that

\centerline{$A_n\notin E$,
but $A_m\in E$ for any $m>n$.}

We have

\centerline{$c_{N+n}=\det\,A_{N+n\over 2} +\sum_{(i,j)\in I_{N+n}}\,<A_i\vert A_j>$}

\noindent Since $E$ consists of rank one endomorphisms, we have 
$\det a =0$ for any $a\in E$ and $<a,b>=0$ for any $a,\,b\in E$.
Therefore we have $\det\,A_{N+n\over 2}=0$ and $<A_i\vert A_j>=0$, whenever
$(i,j)$ lies in $I_{N+n}$ and $i\neq n$. Thus it follows that
$<A_n\vert \,A_N>=0$.

Set $\delta'=\Ker A_N$.  An easy computation shows that the previous relation 
$<A_N\vert \,A_n>=0$ implies
that $A_n(\delta')\subset \delta$. Set $B=e_\delta\circ A_n$. 
Clearly we have $\Ker \,B\supset\delta'$ and $\Image\,B\subset\delta$, therefore
$B$ is proportional to $A_N$. Since $B$ is not in $E$, we have $B\neq 0$ and therefore
$ce_\delta\circ A_n=A_N$ for some $c\in K$.
Set 

\centerline{$H(t)= (1-ct^{N-n} e_\delta).G(t)$.}

\noindent We have $e_{\delta}.a=0$ for any $a\in E$, therefore we have
$e_\delta.A_m=0$ for any $m>n$. It follows that 
$H(t)$ has degree $\leq N$. Moreover, its degree $N$ component
is $A_N-ce_\gamma.A_n=0$. Therefore $H(t)$ has degree
$<N$, et the proof runs by induction.
\end{proof}

\bigskip
\noindent
{\it 6.4 Free products in $GL_1(K[t]$}

\noindent Let $K$ be a field. 

\begin{lemma}\label{Tits}  We have 

\centerline{$GL_1(2,K[t])= *_{\delta\in\P^1_K}\,E_\delta$.} 
\end{lemma}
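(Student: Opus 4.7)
The plan is to deduce the free-product structure from the Ping-Pong Lemma (Lemma \ref{Ping-Pong}). By Lemma \ref{generation} the subgroups $E_\delta$ generate $GL_1(2,K[t])$, so the canonical map $F:=*_{\delta\in\P^1_K} E_\delta\to GL_1(2,K[t])$ coming from the universal property of the free product is surjective; it remains to prove it is injective, and I would do so by exhibiting a faithful ping-pong action. The group $GL_1(2,K[t])$ sits inside $GL_2(K((1/t)))$ and thus acts on $\P^1(K((1/t)))$, where $K((1/t))$ is the completion at infinity, with valuation $v_\infty$ normalized by $v_\infty(t)=-1$. Note that $tK[t]$ lies in $\{a\in K((1/t)): v_\infty(a)\le -1\}$, which is the crucial inequality that drives the ping-pong.

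First I would set up the reduction map. The valuation ring $K[[1/t]]\subset K((1/t))$ has residue field $K$, and this induces a reduction map $\red:\P^1(K((1/t)))\to \P^1(K)$: represent $w\in\P^1(K((1/t)))$ by $(x,y)$ with $\min(v_\infty(x),v_\infty(y))=0$, and set $\red(w)=[\bar x:\bar y]\in\P^1(K)$. For each $\delta\in\P^1_K$ define
\[
\Omega_\delta:=\red^{-1}(\delta).
\]
These sets are pairwise disjoint (as fibers of a single map) and nonempty (each $\delta\in\P^1_K$ lies in $\Omega_\delta$).

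Next I would check the Ping-Pong condition: for distinct $\delta,\gamma\in\P^1_K$ and every $g\in E_\delta^*$, one has $g\cdot\Omega_\gamma\subset\Omega_\delta$. Fix a basis of $K^2$ adapted to $\delta$, so that $\delta=[1:0]$ and $e_\delta=\bigl(\begin{smallmatrix}0&1\\0&0\end{smallmatrix}\bigr)$; then $g=\bigl(\begin{smallmatrix}1&tf(t)\\0&1\end{smallmatrix}\bigr)$ with $f\in K[t]$, $f\neq 0$, and on the affine chart $\{[z:1]\}$ it acts by $z\mapsto z+tf(t)$. Take $w\in\Omega_\gamma$ with $\gamma\neq\delta$; then $\gamma\neq[1:0]$ forces the second coordinate of any normalized representative to be a unit of $K[[1/t]]$, so $z:=x/y\in K[[1/t]]$, i.e.\ $v_\infty(z)\ge 0$. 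Since $v_\infty(tf(t))=-1-\deg f\le -1< v_\infty(z)$, the ultrametric inequality gives $v_\infty(g\cdot z)=-1-\deg f$. Multiplying the representative $(g\cdot z,1)$ by $t^{-(1+\deg f)}$ puts it into $K[[1/t]]^2$ with reduction $[c:0]=[1:0]=\delta$, where $c\in K^*$ is the leading coefficient of $tf(t)$. Hence $\red(g\cdot w)=\delta$, as required.

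Finally, the free product is nontrivial: each $E_\delta$ contains $1+te_\delta\neq 1$, and $\Card\P^1_K\ge |K|+1\ge 3$. Applying Lemma \ref{Ping-Pong} to the action of $F=*_{\delta\in\P^1_K} E_\delta$ on $\Omega=\P^1(K((1/t)))$ via the map $F\to GL_1(2,K[t])\hookrightarrow GL_2(K((1/t)))$ shows the action is faithful, so this map is injective, and combined with surjectivity from Lemma \ref{generation} we get the claimed isomorphism. The only nontrivial point is the choice of completion: working at $\infty$ rather than at $0$ is essential so that polynomials in $tK[t]$ have strictly negative $v_\infty$, sending any bounded residue point out to the fiber above $\delta$; once this is arranged, the ping-pong computation is routine.
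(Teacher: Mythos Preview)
Your proof is correct and is essentially the paper's own argument in different clothing: the paper applies the Ping-Pong Lemma to the action on $\Omega=K[t]^2\setminus\{0\}$, with $\Omega_\delta$ the set of vectors whose highest-degree component lies on $\delta$, which is exactly your reduction-at-infinity condition rewritten without passing to $K((1/t))$ or projectivizing. The verification that $E_\delta^*\cdot\Omega_{\delta'}\subset\Omega_\delta$ via the leading term, followed by the appeal to Lemma~\ref{generation} for surjectivity, matches the paper line for line.
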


\begin{proof} Set $\Omega:=K[t]^2\setminus\{0\}$. Any $v\in\Omega$
can be written as a finite sum $v=\sum_{0\leq k}\,v_k\otimes t^k$,
where  $v_k$ lies in $K^2$. The biggest integer $n$ with 
$v_n\neq 0$ is the {\it degree} of $v$ and 
$hc(v):=v_n$ is called its highest component.
For any $\delta\in \P^1_K$, set 
$E^*_\delta=E_\delta\setminus\{1\}$ and

\centerline
{$\Omega_\delta=\{v\in \Omega\vert\,\, hc(v)\in\delta\}$.}

Let $H(t)\in E^*_\delta$. Since $H(t)\neq 1$, its degree $n$
is positive and its degree $n$ component is $c.e_\delta$ for some
$c\neq 0$. Let $\delta'\in\P^1_K$ be a line disctinct 
from $\delta$
and let $v\in\Omega_{\delta'}$. Since $e_\delta.u\neq 0$ for
any non-zero $u\in\delta'$, we have

\centerline{$hc(H(t).v)=ce_\delta hc(v)$,}

\noindent for any $v\in \Omega_{\delta'}$. It follows that
$E^*_\delta.\Omega_{\delta'}\subset\Omega_\delta$ for any
$\delta\neq\delta'$.

Therefore by Lemma \ref{Ping-Pong}, the free product
$*_{\delta\in\P^1_K}\,E_\delta$ embeds in $GL_1(2,K[t])$.
Hence by Lemma \ref{generation}, we have
$*_{\delta\in\P^1_K}\,E_\delta=GL_1(2,K[t])$.

\end{proof}

\bigskip

\noindent {\it Remark.}  
Set

$U^+=\{\begin{pmatrix} 1& f\\ 0&1 \end{pmatrix}
\vert\,\textnormal{ for}\, f\in K[t]\},$

$U^-=\{\begin{pmatrix} 1& 0\\ f&1 \end{pmatrix}
\vert\,\textnormal{ for}\, f\in tK[t]\},$ and

$U=\{\begin{pmatrix} 1& x\\ 0&1 \end{pmatrix}
\vert\,\textnormal{ for}\, x\in K\}$.

It is easy to show that Lemma \ref{Tits} is equivalent to
the fact that $U^+*U^-=GL_U(2,K)$. This results in stated in the context of Kac-Moody groups in Tits notes \cite{T82}.
Since these notes are not widely distributed, let mention that an equivalent result is stated in \cite{T89}, Section 3.2 and 3.2. For Tits original proofs, see \cite{T87}.

\section{The Linear Representation of  
$\Aut_{1}\,K^2$}

In this section, 
we prove Theorem B.

\noindent
{\it 7.1 The subgroups $F_\delta$ in $\Aut_{1}\,K^2$.}

\noindent
Let $\delta\in\P^1_K$ and let $(a,b)\in\delta$ be nonzero.
For any $f\in  K[t]$, let $\tau_\delta(f)$ be the 
automorphism

\centerline {$\tau_\delta(f): 
(x,y)\to (x+af(bx-ay), y+bf(bx-ay))$.} 

\noindent We have 
$\tau_\delta(f)\circ \tau_\delta(g)=\tau_\delta(f+g)$. Let
$F_\delta =\{\delta(f)\vert \,f\in t^2 K[t]\}$. Indeed for
$\delta_0=K(0,1)$, $\tau_{\delta_0}(f)$ is the elementary
automorphism

\centerline{$(x,y)\mapsto (x,y+f(x))$,}

\noindent and  $F_{\delta_0}$ is the group $\Elem_1(K^2)$ of Section 1.5.  In general, we have
$F_\delta=\Elem_1(K^2)^g$, where $g\in GL(K^2)$ satisfies
$g.\delta_0=\delta$.

\begin{lemma}\label{freeAut}  We have

\centerline{$\Aut_1(K^2)= *_{\delta\in\P^1_K}\,F_\delta$.}

\end{lemma}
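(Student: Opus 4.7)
The plan is to derive Lemma \ref{freeAut} directly from van der Kulk's Theorem by applying Lemma \ref{mixing1} with $G = GL(K^2)$, $A = B_0$, $U = \Elem_0(K^2)$. The key point is that $\Aut_1(K^2)$ is exactly the kernel of the differential map $\chi : \Aut_0(K^2) \to GL(K^2)$, $\phi \mapsto \d\phi|_{\bf 0}$, and that under the van der Kulk decomposition $\Aut_0(K^2) = GL(K^2) *_{B_0} \Elem_0(K^2)$ this map matches the morphism considered in Section 6.2.

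First I would verify the semi-direct product structure $\Elem_0(K^2) = B_0 \ltimes \Elem_1(K^2)$. Any element of $\Elem_0(K^2)$ has the form $(x,y) \mapsto (z_1 x, z_2 y + f(x))$ with $f(0) = 0$, and writing $f(x) = cx + x^2 g(x)$ one factors it as an element of $B_0$ composed with an element of $\Elem_1(K^2)$; the latter is clearly normal since it is precisely the kernel of the restriction of $\chi$ to $\Elem_0(K^2)$, whose quotient is $B_0$. Then Lemma \ref{mixing1} yields
\[
\Aut_1(K^2) \simeq *_{\gamma \in GL(K^2)/B_0}\, \Elem_1(K^2)^\gamma.
\]
Since $B_0$ is the stabilizer in $GL(K^2)$ of the line $\delta_0 = K \cdot (0,1)$, the natural bijection $GL(K^2)/B_0 \simeq \P^1_K$ sends $gB_0$ to $g \cdot \delta_0$, so it only remains to identify $\Elem_1(K^2)^g$ with $F_\delta$ whenever $g \cdot \delta_0 = \delta$.

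This last identification is a short coordinate computation: choosing $g = \left(\begin{smallmatrix} c & a \\ d & b \end{smallmatrix}\right)$ with $(a,b) \in \delta$ and applying the definitions to $\mu(F)(u,v) = (u, v + u^2 F(u))$ gives
\[
g \circ \mu(F) \circ g^{-1} : (x,y) \mapsto \bigl(x + a u^2 F(u),\ y + b u^2 F(u)\bigr), \qquad u = \tfrac{bx - ay}{cb - ad},
\]
which is $\tau_\delta(\tilde f)$ for $\tilde f(w) = (w/(cb-ad))^2 F(w/(cb - ad)) \in t^2 K[t]$; as $F$ ranges over $K[t]$, $\tilde f$ ranges over $t^2 K[t]$, giving $g\,\Elem_1(K^2)\,g^{-1} = F_\delta$. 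I expect the only mild obstacle to be bookkeeping: checking that $F_\delta$ is genuinely well-defined (independent of the choice of representative $(a,b) \in \delta$, which follows by the substitution $w \mapsto \lambda w$) and that conjugation by two different lifts of $\delta$ yields the same subgroup, so that the indexing of the free product by $\P^1_K$ is unambiguous.
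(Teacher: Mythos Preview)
Your proposal is correct and follows essentially the same approach as the paper: apply Lemma~\ref{mixing1} to the van der Kulk decomposition $\Aut_0\,K^2 = GL(K^2)*_{B_0}\Elem_0(K^2)$, using that $\Elem_0(K^2) = B_0 \ltimes \Elem_1(K^2)$ and that the projection $\chi$ is the differential at $\mathbf{0}$, then identify $GL(K^2)/B_0$ with $\P^1_K$ and each conjugate $\Elem_1(K^2)^g$ with $F_{g\cdot\delta_0}$. Your version simply spells out the semi-direct product factorization and the conjugation computation that the paper leaves implicit.
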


\begin{proof} 
First we check that $\Aut_0\,K^2$ satisfies the hypotheses of Lemma \ref{mixing1}. By van der Kulk Theorem, we have 

\centerline{$\Aut_0\,K^2=G_1*_A\,G_2=\Gamma_0$}

\noindent where $G_1=GL(K^2)$, $G_2=\Elem_0(K^2)$ and
$A=B_0$ is the Borel subgroup of $GL(K^2)$. We have
$G_2=A\ltimes E$ where $E=\Elem_1(K^2)$. Clearly the
map $\chi:G_1*_A\,G_2\to G_1$ is simply the map

\centerline{$\phi\in\Aut_0(K^2)\to \textnormal{d}\phi_{\bf 0}$,}

\noindent and its kernel is $\Gamma_1=\Aut_1(K^2)$.
Since $G_1/A=GL(2)/B_0=\P^1_K$, and 
$E^\gamma=F_\delta$ for any $\gamma\in GL(K^2)$ with
$\gamma.\delta_0=\delta$,
it follows from
Lemma \ref{mixing1} that

\centerline{$\Aut_1(K^2)=*_{\gamma\in\P^1_K}\,F_\delta$.}

\end{proof}

\bigskip
\noindent{\it 7.2 Proof of Theorem B}

\noindent For any $\delta\in\P^1_K$, let 
$\psi_\delta:F_\delta\to E_\delta$ be the isomorphism
defined  by

\centerline{$\psi_\delta(\tau_\delta(f))=\id +f/t\otimes e_\delta$.}

\begin{MainB} The collection of isomorphisms 
$(\psi_\delta)_{\delta_{\P^1_K}}$ induces an isomorphism

\centerline{$\psi:\Aut_1\,K^2\simeq GL_1(2,K[t])$.}

In particular, $\Aut_1\,K^2$ is linear over $K(t)$

\end{MainB}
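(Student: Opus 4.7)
The plan is to recognize that everything essential has already been established in the preceding two lemmas, so the theorem becomes a direct application of the universal property of free products. Lemma \ref{freeAut} identifies $\Aut_1 K^2$ with the free product $*_{\delta\in\P^1_K} F_\delta$, while Lemma \ref{Tits} identifies $GL_1(2,K[t])$ with the free product $*_{\delta\in\P^1_K} E_\delta$ over the same indexing set $\P^1_K$. If the $\psi_\delta$ are group isomorphisms of corresponding factors, then by the universal property of free products they assemble uniquely into a group isomorphism $\psi:\Aut_1 K^2 \to GL_1(2,K[t])$, and linearity over $K(t)$ is then immediate since $GL_1(2,K[t])\subset GL(2,K(t))$.

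The one step that still requires verification is that each $\psi_\delta:F_\delta\to E_\delta$ is indeed a well-defined group isomorphism. First I would check that $\psi_\delta$ lands in $E_\delta$: by definition $F_\delta$ consists of $\tau_\delta(f)$ with $f\in t^2 K[t]$, so writing $f=t^2 h$ with $h\in K[t]$ we get $f/t=th\in tK[t]$, and hence $\id + (f/t)\,e_\delta=\id+th\,e_\delta$ does lie in $E_\delta=\{\id+tg(t)\,e_\delta\mid g\in K[t]\}$. This substitution $f\leftrightarrow t^2 h$ also makes it visibly bijective.

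For the homomorphism property, on the $F_\delta$ side the composition law is $\tau_\delta(f)\circ\tau_\delta(g)=\tau_\delta(f+g)$, which is built into the definition. On the $E_\delta$ side, since $e_\delta^2=0$ we have
\[
(\id+(f/t)\,e_\delta)(\id+(g/t)\,e_\delta)=\id+\bigl((f+g)/t\bigr)\,e_\delta.
\]
Both groups are therefore additive copies of $t^2 K[t]$ (equivalently $tK[t]$), and $\psi_\delta$ is literally the identity on this underlying additive parametrization, so it is a group isomorphism.

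With the $\psi_\delta$ in hand, the universal property of the free product yields the combined isomorphism $\psi$, and the embedding $GL_1(2,K[t])\hookrightarrow GL(2,K(t))$ delivers the linearity statement. There is no real obstacle here: the substantive content of Theorem B is packaged into Lemmas \ref{freeAut} and \ref{Tits}, which were the nontrivial steps (using van der Kulk plus the mixing lemmas on one side, and the Tits Ping-Pong together with Lemma \ref{generation} on the other). Once those structural results are in place, matching the factors is essentially a formal bookkeeping exercise.
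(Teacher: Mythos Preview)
Your proof is correct and follows essentially the same approach as the paper: both invoke Lemmas \ref{Tits} and \ref{freeAut} to express each side as a free product over $\P^1_K$, match the factors via the $\psi_\delta$, and then embed into $GL(2,K(t))$ (the paper writes $SL(2,K(t))$, which is fine since $\det(\id+(f/t)e_\delta)=1$). You simply spell out the verification that each $\psi_\delta$ is a group isomorphism, which the paper leaves implicit.
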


\begin{proof} The first statement is a consequence of
Lemmas \ref{Tits} and \ref{freeAut}. It follows that
$\Aut_1\,K^2\subset SL(2,K(t))$, therefore
$\Aut_1\,K^2$ is linear over $K(t)$.
\end{proof}

\bigskip
\noindent{\it Remark} The isomorphism 
$\Aut_1\,K^2\simeq GL_1(2,K[t])$ is not canonical, since
it depends upon a choice, for each $\delta\in\P^1_P$, of a basis
of $\delta$. 

Moreover the formula defining $\psi$ is complicated.
Indeed if an automorphism $\sigma\in \Aut_1(K^2)$ is written
as a free product $\sigma=\sigma_1\dots\sigma_m$, where $\sigma_i\in F_{\delta_i}$ has degree $m_i$, then $\sigma$ has degree
$m_1\dots m_n$ although $\psi(\sigma)$ has degree
$m_1+\dots +m_m-m$.

\section{Linearity of $\Aut_S\,K^2$ for some $S\subset GL(2,K)$}

In this section, Theorem C1 and C2 are proved.

For a subgroup $S$ of $GL(2,K)$, 
the  following hypotheses $\cal U$ and $\cal{QU}$ will be considered

\noindent$(\cal U)$\hskip15mm any $K$-reducible element  $g\in S$ is unipotent,

\noindent$(\cal QU)$\hskip15mm any $K$-reducible element  $g\in S$ is quasi-unipotent.

\bigskip\noindent
{\it 8.1 Proof of Theorem C1}

\begin{MainC1} Assume that the subgroup $S$ satisfies the hypothesis $\cal U$. Then we have

\centerline{$\Aut_S\,K^2\simeq GL_S(2,K[t])$.}

In particular, $\Aut_S\,K^2$ is linear over $K(t)$.
\end{MainC1}

\begin{proof} 

It follows from Lemmas \ref{Tits} and \ref{freeAut}
that 

\centerline{$GL_1(2,K[t])=*_{\delta\in\P^1_K}\,E_\delta$ and
$\Aut_1\,K^2=*_{\delta\in\P^1_K}\,F_\delta$,}

\noindent where the groups $E_\delta$ and $F_\delta$ are obviously isomorphic. For  $\delta\in\P^1_K$, let $S_{\delta}$ be
the stabilizer in $S$ of $\delta$. Any element in $S_{\delta}$ is
$K$-reducible, so by hypothesis $(\cal U)$ all elements in
$S_\delta$ are unipotent. It follows that $S_\delta$ acts trivially on $E_\delta$ and $F_\delta$, therefore the $S_\delta$
groups $E_\delta$ and $F_\delta$ are $S_\delta$-isomorphic.
It follows from Lemma \ref{mixing3} that the $S$ groups
$*_{\delta\in\P^1_K}\,E_\delta$ and
$*_{\delta\in\P^1_K}\,F_\delta$ are $S$-isomorphic. Therefore
$GL_S(2,K[t])=S\ltimes (*_{\delta\in\P^1_K}\,E_\delta)$ and
$\Aut_1\,K^2=S\ltimes (*_{\delta\in\P^1_K}\,F_\delta)$ are isomorphic.
\end{proof}

\bigskip\noindent
{\it 8.2 FG subgroups of $GL(2,K)$}

\noindent 
\begin{lemma}\label{quasiorder}
Let $S$ be a FG subgroup of $GL(2,K)$. There is an integer
$N>0$ such that any  quasi-unipotent $g\in S$ has quasi-order divisible by $N$.
\end{lemma}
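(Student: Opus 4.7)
The plan is to find a finitely generated subfield $F\subset K$ that contains the matrix entries of the generators of $S$, observe that every eigenvalue of every element of $S$ lies in an extension of $F$ of degree at most $2$, and then prove that only finitely many roots of unity in $\bar F$ have degree $\le 2$ over $F$. The integer $N$ will be taken to be the least common multiple of the orders of these roots of unity, so that $g^N$ is unipotent for every quasi-unipotent $g\in S$ (i.e.\ the quasi-order of $g$ divides $N$).

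First I would fix a finite generating set $g_1,\dots,g_r$ of $S$ and let $F$ be the subfield of $K$ generated over the prime field $k_0$ by the entries of the $g_i$ and the $g_i^{-1}$; then $F$ is finitely generated over $k_0$ and every element of $S$ has entries in $F$. The characteristic polynomial of any $g\in S$ is a monic quadratic with coefficients in $F$, so the eigenvalues of $g$ lie in an extension of $F$ of degree at most $2$. When $g$ is quasi-unipotent, these eigenvalues are roots of unity and therefore lie in
$$U=\{\zeta\in\bar F:\zeta\text{ is a root of unity and }[F(\zeta):F]\le 2\}.$$
Once $U$ is shown to be finite, setting $N$ to be the $\lcm$ of the orders of elements of $U$ does the job.

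Next I would prove finiteness of $U$. Let $F_0$ be the algebraic closure of $k_0$ inside $F$. Since $F/k_0$ is finitely generated, $F$ is a finite algebraic extension of $k_0(T)$ for some transcendence basis $T$, and the inclusion $F_0\cdot k_0(T)\subset F$ forces $[F_0:k_0]\le[F:k_0(T)]<\infty$. By construction $F_0$ is algebraically closed in $F$, and for any root of unity $\zeta$ of order prime to $\ch\,K$ the extension $F_0(\zeta)/F_0$ is finite and separable; hence $F_0(\zeta)$ and $F$ are linearly disjoint over $F_0$ (their intersection is algebraic over $F_0$ inside $F$ and so equals $F_0$), which yields $[F(\zeta):F]=[F_0(\zeta):F_0]$. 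In characteristic $0$, $F_0$ is a number field and $[F_0(\zeta_n):F_0]\ge\varphi(n)/[F_0:\Q]$ grows without bound, leaving only finitely many admissible $n$. In characteristic $p$, $F_0=\F_q$ is finite and the relevant roots of unity have order coprime to $p$; the bound $[F_0(\zeta):F_0]\le 2$ forces $\zeta\in\F_{q^2}$, a finite set.

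The hard part is the linear disjointness identity $[F(\zeta):F]=[F_0(\zeta):F_0]$, for which it is essential that $F_0$ is algebraically closed in $F$ and that $F_0(\zeta)/F_0$ is separable. Once this is secured, finiteness of $U$ reduces to counting roots of unity of bounded degree over a fixed number field (respectively, inside a fixed finite field), and the definition of $N$ completes the proof.
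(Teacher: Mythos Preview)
Your argument is correct and follows the same route as the paper: pass to a finitely generated subfield $F$ of $K$, let $F_0=\overline{k_0}\cap F$ (the paper's $E$), and bound the roots of unity of degree $\le 2$ over $F_0$ by the same case split on the characteristic. Where you invoke linear disjointness, the paper simply asserts $[E(\zeta):E]\le 2$; note that for quasi-unipotent $g$ both eigenvalues are roots of unity, so $\tr g,\det g\in\overline{k_0}\cap F=F_0$ and the characteristic quadratic already has coefficients in $F_0$ --- this shortcut replaces your disjointness step (whose justification, incidentally, needs $F_0(\zeta)/F_0$ to be Galois, not merely separable, for trivial intersection to force linear disjointness).
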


\begin{proof} Let $F$ be the prime field of $K$, namely
$F=\F_p$ if $K$ has characteristic $p$ and $F=\Q$ otherwise.
There is a FG ring $R\subset K$ such that
$S$ is a subgroup of $GL(2,R)$. It can be assumed that $K$
is the fraction field of $R$.

Let $X$ be the set of all roots of 
unity which are eigenvalues of some element $g\in S$. We claim that the set $X$ is finite.

Set $E=\overline F\cap K$. Since $K$ is finitely generated,
$E$ is a finite extension of $F$. Any $\zeta\in X$ lies in $E$ or in a quadratic
extension of $E$. We will now consider separately the case where $K$ has zero characteristic or finite characteristic.

First assume that $\ch\,K=0$.  Let $m$ be the order of  $\zeta$. Since $[\Q(z):\Q]=\phi(m)$, we have $\phi(m)\leq 2[E:\Q]$. Thus 
$m$ is bounded, and therefore $X$ is finite.

Next asssume that $\ch\,K=p$. Then $E$ is a finite field and 
$\zeta$ lies in the unique quadratic extension $E'$ of $E$.
Therefore $X\subset E'$ is finite.

Since $X$ is finite, we have $X\subset \mu_N$ for some $N$,
where $\mu_N$ is the set of $N^{th}$-root of unity. Therefore
any quasi-unipotent element $g\in S$ has eigenvalues in $\mu_N$, and therefore $g^N$ is unipotent.

\end{proof}

\begin{lemma} \label{quasiU}
Let $S$ be a FG subgroup of $GL(2,K)$ satisfying
the hypothesis $\cal {QU}$. Then $S$ contains a finite
index subgroup $S'$ satisfying the hypothesis
${\cal U}$.
\end{lemma}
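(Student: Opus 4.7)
The plan is to realize $S$ inside $GL(2,R)$ for a suitable finitely generated subring $R\subset K$, then take $S'$ to be the kernel of reduction modulo a carefully chosen maximal ideal $\mathfrak{m}$. The choice of $\mathfrak{m}$ will ensure that, among the finitely many possible characteristic polynomials of $K$-reducible quasi-unipotent elements, only $(t-1)^2$ reduces to $(t-1)^2$ mod $\mathfrak{m}$. By Lemma~\ref{quasiorder}, fix $N>0$ such that every quasi-unipotent element of $S$ has quasi-order dividing $N$, and set $\mu := \mu_N\cap K$, a finite subgroup of $K^*$. Since $S$ is FG and $\mu$ is finite, we can choose a finitely generated subring $R\subset K$ containing both $\mu$ and the matrix entries of a finite generating set of $S$, so that $S\subset GL(2,R)$ and $\mu\subset R$.

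Consider the two finite subsets
\[ T_1 = \{\lambda+\lambda'-2\mid \lambda,\lambda'\in\mu\},\qquad T_2 = \{\lambda\lambda'-1\mid \lambda,\lambda'\in\mu\}\]
of $R$. Since $R$ is a finitely generated domain, it is a Jacobson ring whose maximal ideals all have finite residue field. As there are only finitely many nonzero elements in $T_1\cup T_2$, one can choose a maximal ideal $\mathfrak{m}\subset R$ with $R/\mathfrak{m}$ finite and such that $\mathfrak{m}$ contains no nonzero element of $T_1\cup T_2$. Let $\rho \colon GL(2,R)\to GL(2,R/\mathfrak{m})$ be the reduction morphism and set $S':=S\cap\ker\rho$. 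Since $GL(2,R/\mathfrak{m})$ is finite, $S'$ has finite index in $S$.

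It remains to verify that $S'$ satisfies $({\cal U})$. Let $g\in S'$ be $K$-reducible. By hypothesis $({\cal QU})$ applied in the ambient group $S$, the element $g$ is quasi-unipotent, and by the choice of $N$ its eigenvalues $\lambda_1,\lambda_2$ lie in $\mu$. From $\rho(g)=1$ we deduce
\[ \lambda_1+\lambda_2-2 \;=\; \tr(g)-2 \;\in\; T_1\cap\mathfrak{m},\qquad \lambda_1\lambda_2-1 \;=\; \det(g)-1 \;\in\; T_2\cap\mathfrak{m}.\]
By construction of $\mathfrak{m}$, both elements must be zero, so the characteristic polynomial of $g$ is $(t-1)^2$; hence $g$ is unipotent.

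The main obstacle in this argument is the existence of the maximal ideal $\mathfrak{m}$ of step~2: one must know that a finitely generated subring of $K$ is Jacobson with finite residue fields and admits maximal ideals avoiding any prescribed finite set of nonzero elements. This is a standard consequence of the fact that finitely generated rings over $\Z$ (or over $\F_p$) are Jacobson, applied after, if necessary, enlarging $R$ so that $T_1\cup T_2\subset R$.
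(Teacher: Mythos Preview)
Your argument is correct and follows essentially the same route as the paper: invoke Lemma~\ref{quasiorder} to bound the possible eigenvalues, list the finitely many possible values of $(\tr g - 2,\det g - 1)$, choose a cofinite (in the paper's phrasing) or maximal (in yours) ideal avoiding the nonzero ones, and take the corresponding congruence subgroup. The only cosmetic differences are that you intersect $\mu_N$ with $K$ up front and name the Jacobson property explicitly, whereas the paper works with pairs $(s,p)\in C\cap R^2$ and appeals to the vanishing of the intersection of cofinite ideals.
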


\begin{proof} By Lemma \ref{quasiorder}, there exists 
an integer $N\geq 1$ such that any $K$-reducible element in $S$
is quasi-idempotent of quasi-order divisible by $N$.

Let $C$ be the set of pairs $(s,p)$ in $K^2$  such that
$s=\zeta_1+\zeta_2$ and $p=\zeta_1 \zeta_2$ for some
$\zeta_1,\zeta_2\in\mu_N$.

There is a finitely generated subring $R$ of $K$ such that
$S\subset GL(2,R)$. Since the intersection of all cofinite ideal $m$ in $R$ is zero, there is a cofinite ideal $m\subset R$
such that $(s,p)\not\equiv (2,1)\mod m$ for any $(s,p)\in C\cap R^2$
with $(s,p)\neq (2,1)$.

Set $S'=S\cap GL(2,m)$. Since $GL(2,m)$ has finite index in $GL(2,R)$, $S'$ has finite index in $S$. We claim that
$S'$ satisfies the hypothesis ${\cal U}$. 

Let $g\in S$' be
$K$-reducible. Since $g$ is quasi-idempotent of quasi-order
divible by $N$, the couple $(\tr\,g,\det\,g)$ belongs to
$C$. Since $g\equiv 1\mod m$, we have 
$(\tr\,g,\det\,g)\equiv (2,1)\mod m$. By the construction of $m$,
we have $(\tr\,g,\det\,g)=(2,1)$, and therefore $g$ is
unipotent. Hence $S'$ satisfies the hypothesis $({\cal U})$.

\end{proof}

\bigskip\noindent
{\it 8.3 Proof of Theorem C.2}

\bigskip

\begin{MainC2} Let $S$ be a FG subgroup of $GL(K^2)$
satisfying hypothesis $({\cal QU})$. Then 
$\Aut_S\,K^2$ is linear over $K(t)$.

\end{MainC2}

\begin{proof} By Lemma \ref{quasiU}, there is a subgroup $S'$
of $S$ of finite index satisfying the hypothesis $(\cal U)$.
By Theorem C.1, the group $\Aut_{S'}\,K^2$ is linear over $K(t)$.
Since it is a finite index subgroup, the group 
$\Aut_S\,K^2$ is also linear over $K(t)$.
\end{proof}

\section{Some Corollaries}

{\it 9.1 Linearity of $\Aut\,K^2$ for a finite field $K$}

\begin{CorD1} Let $K$ be a finite field of characteristic $p$. The
group $\Aut\,K^2$ is linear over $\F_p(t)$.

\end{CorD1}
 
 \begin{proof} By Theorem B, the group
 $\Aut_1\,K^2$ is  Since 
  $[\Aut\,K^2:\Aut_1\,K^2]$ is finite, the group
 $\Aut\,K^2$ is also  linear over $\F_p(t)$.
 
\end{proof}

\bigskip\noindent
{\it 9.2 Linearity of FG subgroups of $\Aut\,K^2$ for
a quasi-finite field $K$}

\begin{CorD2} Let $K$ be an infinite subfield of
$\overline F_p$.

The group $\Aut\,K^2$ is not linear, even over a ring.
However any FG subgroup is linear over
$F_p(t)$

\end{CorD2}

\begin{proof} Indeed $\Aut\,K^2$ is not linear by
Theorem B. Any FG subgroup $\Gamma$ of $\Aut\,K^2$  is a subgroup
of $\Aut\,L^{2}$ for some finite subfield $L\subset K$, so it is linear by Corollary D.1.
\end{proof}

\bigskip\noindent
{\it 9.3 Linearity over $K$ of $\Aut\,K^2$ for
a big enough field $K$}

\begin{lemma}\label{Card}
Let $K$, $L$ be  fields. If
$\Card\, K=\Card\, L$ and $\ch\, K=\ch\, L$,
then $\Aut_1\,K^2$ and $\Aut_1\,L^2$
are isomorphic.

\end{lemma}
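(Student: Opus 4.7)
The plan is to decompose both groups as free products using Lemma \ref{freeAut} and then build the isomorphism factor-by-factor.

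By Lemma \ref{freeAut}, we have
\[
\Aut_1\,K^2 \;=\; *_{\delta \in \P^1_K} F_\delta \qquad \text{and} \qquad \Aut_1\,L^2 \;=\; *_{\delta \in \P^1_L} F_\delta,
\]
and the assignment $\tau_\delta(f) \mapsto f$ identifies each factor $F_\delta$ (for $K$, respectively for $L$) with the additive group $(t^2 K[t], +)$, respectively $(t^2 L[t], +)$. Since $\Card\,\P^1_K = \Card\,K + 1 = \Card\,L + 1 = \Card\,\P^1_L$, the two index sets are in bijection, so it suffices to produce an abelian-group isomorphism between corresponding factors.

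To this end, let $F$ denote the common prime field, that is $F=\Q$ if $\ch\,K = 0$ and $F=\F_p$ if $\ch\,K = p$. Both $(K,+)$ and $(L,+)$ are $F$-vector spaces, and their $F$-dimensions are determined by cardinality: in the infinite case $\dim_F K = \Card\,K$ (and similarly for $L$), while in the finite case $\Card\,K = p^n$ forces $\dim_{\F_p}K = n$. The hypothesis $\Card\,K = \Card\,L$ therefore gives $(K,+)\cong (L,+)$ as $F$-vector spaces, and consequently
\[
(t^2 K[t], +) \;\cong\; \bigoplus_{n \geq 2} (K, +) \;\cong\; \bigoplus_{n \geq 2} (L, +) \;\cong\; (t^2 L[t], +).
\]

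Fix a bijection $\phi : \P^1_K \to \P^1_L$ and, for each $\delta \in \P^1_K$, a group isomorphism $\psi_\delta : F_\delta \to F_{\phi(\delta)}$. Composing $\psi_\delta$ with the canonical inclusion $F_{\phi(\delta)} \hookrightarrow *_{\delta' \in \P^1_L} F_{\delta'}$ and invoking the universal property of free products produces a morphism $\Aut_1\,K^2 \to \Aut_1\,L^2$; the symmetric construction yields an inverse, so the two groups are isomorphic. The only mildly delicate step is the identification $(K,+) \cong (L,+)$, which reduces to the standard classification of vector spaces over a fixed field by dimension; I do not expect any substantial obstacle beyond carefully handling the finite case.
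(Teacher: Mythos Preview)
Your approach is the same as the paper's: invoke Lemma~\ref{freeAut} to write $\Aut_1\,K^2$ as a free product of copies of an $F$-vector space indexed by $\P^1_K$, then match cardinalities of the index sets and dimensions of the factors. There is, however, a real slip in the middle of your argument.

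The assertion ``in the infinite case $\dim_F K=\Card K$'' is false in characteristic zero when $K$ is a number field: for $K=\Q$ one has $\dim_\Q K=1\neq\aleph_0$. Consequently your intermediate claim $(K,+)\cong(L,+)$ is wrong in general; take $K=\Q$ and $L=\Q(\sqrt 2)$, which are $\Q$-vector spaces of dimensions $1$ and $2$ and hence non-isomorphic as abelian groups, yet satisfy all the hypotheses of the lemma. So the chain
\[
\bigoplus_{n\ge 2}(K,+)\;\cong\;\bigoplus_{n\ge 2}(L,+)
\]
does not follow from the step you wrote, even though the conclusion itself is correct.

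The fix is to bypass $(K,+)\cong(L,+)$ and compute the dimension of the free factor directly: $\dim_F\bigl(t^2K[t]\bigr)=\aleph_0\cdot\dim_F K$, and for infinite $K$ this always equals $\Card K$. Indeed, in characteristic $p$ the prime field is finite so $\dim_{\F_p}K=\Card K$; in characteristic zero with $K$ countable one gets $\aleph_0\cdot(\text{something}\le\aleph_0)=\aleph_0=\Card K$; and for $K$ uncountable $\dim_\Q K=\Card K$ holds. This is precisely how the paper states it: each free factor is an $F$-vector space $E$ with $\dim_F E=\Card K$. With this correction your proof is complete. (Incidentally, the finite case you flagged as ``mildly delicate'' is in fact trivial, since two finite fields of the same order are isomorphic as fields; the genuinely delicate case is the countable characteristic-zero one.)
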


\begin{proof} It can be assumed that $K$ is infinite.
Let $F$ be its prime field, let $P$ be a set and let
$E$ be a $F$-vector space with

\centerline{$\Card P=\Card K$ and $\dim_F\,E=\Card\,K$.}

By lemma \ref{freeAut}, we have

\centerline{$\Aut_1\,K^2\simeq *_{p\in P}\, E_p$,}

\noindent where each $E_p$ is a copy of $E$. It follows that 
 $\Aut_1\,K^2$ and $\Aut_1\,L^2$
are isomorphic if $\Card\, K=\Card\, L$ and $\ch\, K=\ch\, L$.
\end{proof}

A field $K$ is called {\it big enough} if its
absolute transcendence degree is
$\geq 1$ and if $K$ is not a finite extension of
 $\F_p(t)$ for some prime number $p$. For example,
 $\Q(t),\,\F_p(t_1,t_2)$ and $L(t)$,
where $L$ is an infinite subfield of $\overline F_p$,  are big enough.

\begin{CorD} Let $K$ be a big enough field. There is
an embedding

\centerline{$\Aut_1\,K^2\subset SL(2,K)$.}
\end{CorD}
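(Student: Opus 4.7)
The plan is to combine Theorem B with Lemma \ref{Card}. Theorem B applied to any field $L$ yields an isomorphism $\Aut_1\,L^2 \simeq GL_1(2,L[t]) \subset SL(2,L[t])$. Suppose one can locate a subfield $L\subset K$ and an element $s\in K$ transcendental over $L$ such that $\Card L = \Card K$. Then the polynomial ring $L[t]$ embeds into $K$ via the substitution $t\mapsto s$, inducing $SL(2,L[t])\hookrightarrow SL(2,K)$. Combined with the isomorphism $\Aut_1\,K^2\simeq \Aut_1\,L^2$ provided by Lemma \ref{Card} (applicable since $\ch L=\ch K$ and $\Card L = \Card K$), this yields the desired embedding $\Aut_1\,K^2\hookrightarrow SL(2,K)$.

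The remaining task is to produce such a subfield $L\subset K$ whenever $K$ is big enough, and I would argue this by a case analysis on the characteristic. In characteristic zero the hypothesis gives a nonempty transcendence basis $(s_i)_{i\in I}$ of $K$ over $\Q$. If $K$ is countable, take $L$ equal to the algebraic closure of $\Q$ in $K$ (still countable) and $s$ any transcendence basis element; $s$ is transcendental over $L$ because $L/\Q$ is algebraic. If $K$ is uncountable then $|I|=\Card K$, and removing one index $i_0$ and letting $L$ be the algebraic closure in $K$ of $\Q((s_i)_{i\neq i_0})$ gives $\Card L = \Card K$ together with $s=s_{i_0}$ transcendental over $L$.

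In characteristic $p$ the same construction goes through when the transcendence degree of $K$ over $\F_p$ is at least $2$. When it equals $1$ the additional hypothesis that $K$ is not a finite extension of $\F_p(t)$ is needed: its role is to guarantee that the algebraic closure of $\F_p$ inside $K$ is large enough to serve as $L$, as in the model case $K=L_0(t)$ for $L_0\subset\overline{\F}_p$ an infinite subfield, where one takes $L=L_0$ and $s=t$. More generally one exploits that $K$ must contain an infinite algebraic extension of $\F_p$ (or another transcendental over $\F_p(t)$) to supply $L$ of the correct cardinality.

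The main obstacle I expect is precisely this last subcase: one must verify that the non-finite-extension hypothesis actually produces a subfield $L$ of full cardinality in every big enough $K$ of characteristic $p$ and absolute transcendence degree one, ruling out any pathological intermediate situation between a finite extension of $\F_p(t)$ and a clear-cut case like $L_0(t)$. The rest of the proof is formal, relying only on the already-established Theorem B and Lemma \ref{Card}.
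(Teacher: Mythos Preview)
Your plan is exactly the paper's proof: the paper asserts that a big enough $K$ admits an embedding $L(t)\subset K$ with $\Card L=\Card K$, and then chains Lemma~\ref{Card} and Theorem~B to obtain $\Aut_1 K^2\simeq\Aut_1 L^2\simeq GL_1(2,L[t])\subset SL(2,L(t))\subset SL(2,K)$, without giving any case analysis for the first assertion.

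Your caution about the characteristic-$p$, transcendence-degree-one subcase is well placed, and the paper does not address it either. In fact your proposed fix (``$K$ must contain an infinite algebraic extension of $\F_p$ or another transcendental'') does not go through as stated: the perfect closure $K=\F_p\bigl(t^{1/p^\infty}\bigr)=\bigcup_n \F_p(t^{1/p^n})$ has $\overline{\F_p}\cap K=\F_p$ and transcendence degree one, and it is not finitely generated over $\F_p$, hence not a finite extension of any $\F_p(s)$, so it is ``big enough'' by the paper's definition; yet every infinite subfield $L\subset K$ contains a transcendental over $\F_p$, so $K$ is algebraic over $L$ and no $s\in K$ is transcendental over $L$. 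Thus the obstacle you anticipate is genuine, and neither your sketch nor the paper's one-line claim covers this case.
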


\begin{proof} Since $K$ is big enough, there is an
embedding $L(t)\subset K$, where $L$ is a field 
with $\Card \,L=\Card\,K$. 

It follows from Lemma \ref{Card} that 
$\Aut_1\,K^2$ is isomorphic to 
$\Aut_1\,L^2$. Moreover by Theorem B, 
$\Aut_1\,L^2$ is isomorphic to
$GL_1(L[t])$. Therefore we have

\centerline{
$\Aut_1\,K^2\simeq GL_1(L[t])
\subset SL_2(L(t))\subset SL(2,K)$.}

\end{proof}

\end{document}